\newtheorem{theorem}{Theorem}[section]
\newtheorem{lem}[theorem]{Lemma}
\newtheorem{cor}[theorem]{Corollary}
\newtheorem{prop}[theorem]{Proposition}
\newtheorem{prob}[theorem]{Problem}
\newtheorem{rem}[theorem]{Remark}
\newtheorem{exm}[theorem]{Example}
\newtheorem{example}[theorem]{Example}
\newtheorem{corollary}[theorem]{Corollary}
\newtheorem{definition}[theorem]{Definition}
\newcommand{\bb}{\mathbb}
\newcommand{\ca}{\mathcal}
\newcommand{\fr}{\mathfrak}
\newcommand{\Z}{\ensuremath{\mathbb{Z}}}
\newcommand{\C}{\ensuremath{\mathbb{C}}}
\renewcommand{\P}{\ensuremath{\mathrm{P}}}
\newcommand{\bP}{\ensuremath{\mathbb{P}}}
\renewcommand{\O}{\ensuremath{\mathcal{O}}}
\newcommand{\rO}{\ensuremath{\mathrm{O}}}
\newcommand{\cS}{\ensuremath{\mathcal{S}}}
\newcommand{\bS}{\ensuremath{\mathbb{S}}}
\newcommand{\bG}{\ensuremath{\mathbb{G}}}
\newcommand{\fX}{\ensuremath{\fr{X}}}
\newcommand{\rV}{\mathrm{V}}
\newcommand{\cN}{\ca{N}}
\newcommand{\vd}{\vec{d}}
\newcommand{\ra}{\rightarrow}
\newcommand{\rS}{\mathrm{S}}
\newcommand{\rH}{\mathrm{H}}
\newcommand{\tikznode}[3][inner sep=0pt]{\tikz[remember
	picture,baseline=(#2.base)]{\node(#2)[#1]{$#3$};}}
\DeclareMathOperator{\Sym}{\mathrm{Sym}}
\DeclareMathOperator{\nbun}{\mathcal{N}}
\newcommand{\ch}{\mathrm{ch}^T}
\newcommand{\td}{\mathrm{td}^T}
\newcommand{\ev}{\mathrm{ev}}
\newcommand{\wev}{\widetilde{\ev}}
\newcommand{\vir}{\mathrm{vir}}
\newcommand{\ovir}{\ca{O}^{\mathrm{vir}}}
\newcommand{\Hom}{\operatorname{\ca{H}\! \mathit{om}}}
\newcommand{\GLr}{\mathrm{GL}_r(\bb{C})}
\newcommand{\kg}{K_{T}^0}
\newcommand{\kG}{K_0^{T}}
\newcommand{\spec}{\operatorname{Spec}}
\newcommand{\qk}{\mathrm{QK}}
\newcommand{\prk}{\P_{r,N-r}}
\newcommand{\prkk}{\P_{r,k}}
\DeclareMathOperator{\cQ}{\mathcal{Q}}
\DeclareMathOperator{\cO}{\mathcal{O}}
\DeclareMathOperator{\quot}{\mathsf{Quot}}
\newcommand{\Quot}{\quot_d(\mathbb{P}^1,N,r)}
\newcommand{\QuotC}{\quot_{d}(C,N,r)}
\DeclareMathOperator{\fix}{\mathrm{F}}
\newcommand{\grass}{\mathrm{Gr}(r,N)}
\newcommand{\grtwo}{\mathrm{Gr}(2,N)}
\newcommand{\ms}{\overline{M}_{0,n}(X,d)}
\begin{document}
	\title[Quantum $K$-invariants via Quot schemes II]{
		Quantum $K$-invariants via Quot schemes II
	}
	
	\author[S.~Sinha]{SHUBHAM SINHA}
	\address{The Abdus Salam International Centre for Theoretical Physics, Strada Costiera 11, 34151 Trieste}
	\email{ssinha1@ictp.it}
	\author[M.~Zhang]{Ming Zhang}
	\address{Fair Isaac Corporation, San Diego}
	\email{mingzhang@fico.com}
	
	\subjclass[2020]{14N35, 14N10 (primary); 19E08, 14M15 (secondary)}
	
	\maketitle
	
	\begin{abstract}		
		We derive a $K$-theoretic analogue of the Vafa--Intriligator formula, computing the (virtual) Euler characteristics of vector bundles over the Quot scheme
		that compactifies the space of degree $d$ morphisms from a fixed projective curve to the Grassmannian $\grass$.
		As an application, we deduce interesting vanishing results, used in our previous work to study the quantum $K$-ring of $\grass$.
		In the genus-zero case, we prove a simplified formula involving Schur functions, consistent with the Borel--Weil--Bott theorem in the degree-zero setting. These new formulas offer a novel approach for computing the structure constants of quantum $K$-products.

	\end{abstract}

	\section{Introduction}
	Let $\grass$ denote the Grassmannian of $r$-dimensional subspaces of $\mathbb{C}^N$, and let $C$ be a smooth curve of genus $g$. Grothendieck's Quot scheme on $C$ compactifies the space of morphisms from $C$ to $\grass$. Str\o mme~\cite{Stromme} initiated the study of the Quot scheme for $C = \bP^1$, later extended by Bertram~\cite{Bertram2} to explore the quantum cohomology of the Grassmannian. The Quot scheme on a genus $g$ curve has also been used to compute fixed-domain Gromov--Witten counts~\cite{Bertram3, Bertram, Marian-Oprea}, with extensions to type A flag varieties and isotropic Grassmannians~\cite{CF1, CF2, CF3, Chen1, KreschTamvakis, Sinha}.
	
	In our previous work~\cite{SinhaZhang}, we showed that the quantum $K$-ring and related $K$-theoretic Gromov--Witten invariants of Grassmannians can be studied through the Quot scheme. We also demonstrated that the topological quantum field theory (TQFT) associated with the quantum $K$-ring is naturally expressed via certain (virtual) Euler characteristics of sheaves over Quot schemes.
	
	In this paper, we present formulas for these Euler characteristics over the Quot scheme,
	giving a $K$-theoretic analogue of the Vafa--Intriligator formulas.
	In the genus-zero case, we derive a bialternant formula for the relevant invariants.
	As an application, we prove vanishing results used in Part I~\cite{SinhaZhang} to study the quantum $K$-ring of Grassmannians,
	including proving the finiteness of the quantum product and deriving ring presentations.
	In the final section, we introduce an algorithm to compute the quantum $K$-product,
	yielding a quantum $K$-Littlewood--Richardson rule for $\grtwo$, with a multiplication table for $\qk(\mathrm{Gr}(3,6))$ provided in the appendix.
	\subsection{Vafa--Intriligator formula}
	Let $C$ be a smooth projective curve of genus $g$. The Quot scheme $\QuotC$
	parametrizes short exact sequences of coherent sheaves over $C$:
	$$
	0\to S\to \O_C^{\oplus N}\to Q\to 0,
	$$
	where $Q$ has rank $N-r$ and degree $d$. The intersection theory of the Quot scheme was studied by Bertram--Daskalopoulos--Wentworth~\cite{Bertram},
	Bertram~\cite{Bertram3}, and Marian--Oprea~\cite{Marian-Oprea} to
	provide a framework for studying fixed-domain Gromov--Witten counts of maps to Grassmannians.
	We will state the results as formulated in~\cite{Marian-Oprea}.
	The Quot scheme carries a universal short exact sequence on $\QuotC\times C$:
	\begin{equation*}
		0\rightarrow
		\ca{S}
		\rightarrow
		\O_{\QuotC\times C}^{\oplus N}
		\rightarrow
		\ca{Q}
		\rightarrow 
		0.	
	\end{equation*}
	We fix a point $p\in C$ and denote by $\cS_p$ the restriction of $\ca{S}$ to $\QuotC\times\{p\}$.

	When $d$ is large, the Quot scheme $\QuotC$ is irreducible, see \cite{Bertram,PopaRoth}, and we can evaluate the top
	intersection numbers of the Chern classes of $\cS_p^\vee$ against the fundamental class. For arbitrary degrees, the Quot scheme may fail to be smooth or irreducible. Nonetheless, it was shown in~\cite{Marian-Oprea,C-F-Kapranov} that $\QuotC$ admits a two-term perfect obstruction theory and consequently a virtual fundamental class, see~\cite{Behrend-Fantechi,Li-Tian} for the general construction,
		\[
		[\QuotC]^{\vir}\in H_{2e}(\QuotC),
		\]
		where $e=Nd-r(N-r)(g-1)$ is the virtual dimension of the Quot scheme. This demonstrates that intersection theory can still be pursued virtually by pairing Chern classes with the virtual fundamental class.
	These intersection numbers are given by the remarkable \emph{Vafa--Intriligator formula}~\cite{Vafa,Intriligator}.
	Let $x_1, \dots, x_r$ denote the Chern roots of $\cS^\vee_p$.
	Note that polynomials in the Chern classes of $\cS^\vee_p$ are symmetric in the $x_i$'s.
	\begin{theorem}[\cite{Bertram3,Siebert-Tian,Marian-Oprea}]\label{thm:VI-formula_MO}
		We have 
		\[
		\int_{
			[\QuotC]^{\vir}} f(x_1,\dots,x_r)
		=
		(-1)^{d(r-1)}
		\sum_{\xi_1,\dots,\xi_r}
		f(\xi_1,\dots,\xi_r)J^{g-1}(\xi_1,\dots,\xi_r),	
		\]
		where
		\begin{itemize}
			\item $f$ is a symmetric polynomial of degree $e$,
			\item 
			the sum is over all $\binom{N}{r}$ tuples $(\xi_1,\dots,\xi_r)$
			of distinct $N$th roots of unity, and
			\item $J(z_1,\dots,z_r)$ is the symmetric function
			\[
			J(z_1,\dots,z_r)=N^r\cdot z_1^{-1}\cdots z_r^{-1}
			\cdot
			\prod_{ i\ne j}	(z_i-z_j)^{-1}.
			\]
		\end{itemize}
	\end{theorem}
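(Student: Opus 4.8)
The plan is to compute the virtual integral by equivariant virtual localization on $\QuotC$ and then to recognize the result as the genus-$g$ partition function of a semisimple Frobenius algebra, which is what produces the sum over roots of unity weighted by $J^{g-1}$. Concretely, I would first exploit the diagonal torus $T=(\bb{C}^{*})^{N}$ acting on $\bb{C}^{N}$, which induces an action on $\O_C^{\oplus N}$ and hence on $\QuotC$ compatible with its two-term perfect obstruction theory. A $T$-fixed point is a $T$-invariant subsheaf $S\subset\O_C^{\oplus N}$, and such an $S$ must split as $S=\bigoplus_{i\in I}\O_C(-D_i)$ for a size-$r$ subset $I\subseteq\{1,\dots,N\}$ and effective divisors $D_i$ with $\sum_{i\in I}\deg D_i=d$. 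The fixed loci are therefore the products $\prod_{i\in I}\mathrm{Sym}^{d_i}(C)$, indexed by $I$ together with a distribution of the fixed degree $d=\sum_{i\in I}d_i$, and the Graber--Pandharipande virtual localization formula writes the integral as a finite sum
\[
\int_{[\QuotC]^{\vir}}f=\sum_{I,\,(d_i)}\int_{\prod_{i\in I}\mathrm{Sym}^{d_i}(C)}\frac{f|_{F}}{e^{T}(N^{\vir}_{F})},
\]
so that everything reduces to the two local inputs $f|_F$ and $e^{T}(N^{\vir}_F)$.

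Next I would compute these inputs. The Chern roots $x_j$ of $\cS^{\vee}_p$ restrict, on the locus $I=\{i_1,\dots,i_r\}$, to $-t_{i_j}$ plus the tautological class obtained by evaluating $\O_C(-D_{i_j})$ at $p$, so $f|_F$ is essentially $f$ evaluated at these shifted weights. The virtual normal bundle is the $T$-moving part of $R\pi_*\mathcal{H}\!om(\ca{S},\ca{Q})$; decomposing $\ca{Q}$ into its full piece $\O_C$ (indices outside $I$) and its torsion piece $\O_{D_i}$ (indices in $I$), each summand is $R\Gamma(C,\O_C(D_i)\otimes\ca{Q}_j)$ of weight $t_j-t_i$. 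This is precisely where the genus enters: the line-bundle contributions carry Euler characteristics $\chi=\deg+1-g$, so $g$ appears as a uniform shift in the exponents, and I would track carefully which cohomology lands in $H^{0}$ (numerator of the Euler class) versus $H^{1}$ (denominator).

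I would then reorganize the finite, manifestly $t$-independent localization sum into the stated root-of-unity sum. The conceptual key is that the whole package of these virtual invariants, as $C$ and the insertions vary, carries a two-dimensional TQFT structure: degenerating $C$ exhibits the invariants as the genus-$g$ partition function of the small quantum cohomology ring $QH^{*}(\grass)$ at $q=1$. Presented à la Siebert--Tian as the Jacobian ring of a Landau--Ginzburg potential, this algebra is semisimple, its spectrum is exactly the tuples of distinct $N$th roots of unity (the critical points), and its Frobenius trace at such a point is the inverse Hessian, which computes to $J(\xi)^{-1}$. The genus-$g$ partition function of a semisimple Frobenius algebra with one insertion $f$ is $\sum_{\alpha}f(\xi_{\alpha})\,\theta_{\alpha}^{\,1-g}=\sum_{\alpha}f(\xi_{\alpha})\,J(\xi_{\alpha})^{g-1}$, which is the desired formula; the prefactor $(-1)^{d(r-1)}$ then records the orientation and degree conventions of the virtual class.

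The step I expect to be the main obstacle is this last reorganization: establishing rigorously that the Quot-scheme virtual invariants assemble into the Frobenius/TQFT partition function (the gluing and degeneration argument) and computing the Hessian to obtain precisely $J$ with its normalization $N^{r}\prod_i\xi_i^{-1}\prod_{i\ne j}(\xi_i-\xi_j)^{-1}$ and the correct power $g-1$. The genus dependence, which must enter only through $J^{g-1}$, is the most delicate bookkeeping; I would pin it down first in the $g=0$ case, where $\QuotC$ is Str{\o}mme's and Bertram's explicit compactification and the formula can be read off directly, and then propagate it to all $g$ via the Riemann--Roch count $\chi=\deg+1-g$ together with the TQFT multiplicativity.
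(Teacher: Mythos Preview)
This theorem is not proved in the paper: it is stated as a known result with attribution to \cite{Bertram3,Siebert-Tian,Marian-Oprea}, and serves only as background motivating the paper's $K$-theoretic analogue (Theorem~\ref{thm:Vafa_Intr_Ktheory_intro}). There is therefore no ``paper's own proof'' to compare against.

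That said, your sketch is a reasonable amalgam of the cited approaches. The localization on $\QuotC$ with fixed loci $\prod_i \mathrm{Sym}^{d_i}(C)$ is exactly the framework of Marian--Oprea; the identification of the resulting sum with a semisimple Frobenius-algebra partition function, via the Siebert--Tian presentation of $QH^*(\grass)$ as a Jacobian ring, is the perspective of \cite{Siebert-Tian} and Witten. Where your outline is thin is precisely the step you flag: in Marian--Oprea the passage from the raw localization sum to the root-of-unity formula is not done by invoking an abstract TQFT gluing axiom, but by a direct residue/contour manipulation that collapses the sum over degree distributions $(d_i)$ into an evaluation at the critical points $\xi_i^N=1$. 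Your plan to ``establish rigorously that the Quot-scheme virtual invariants assemble into the Frobenius/TQFT partition function'' via degeneration is a separate (and substantially harder) theorem than what is needed here; the cited proofs avoid it. If you want to carry out your version, you would need either the degeneration formula for Quot-scheme virtual classes (which is nontrivial and not in the cited papers) or to revert to the direct residue computation, in which case the TQFT language becomes post-hoc interpretation rather than proof.
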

	\begin{rem}\label{rem:Grass_GIT_Coh}
		The Grassmannian admits the GIT presentation
		$\grass=[M_{r\times N}^{ss}/\GLr],$
		where $M_{r\times N}$ is the space of $r\times N$ complex matrices and $M_{r\times N}^{ss}\subset M_{r\times N}$ is the open subset of matrices of full rank.  Let
		$$\fr{X}=[M_{r\times N}/\GLr]$$
		be the quotient stack containing $\grass$. The Quot scheme admits a `stacky' evaluation morphism 
		\begin{equation}\label{eq:evaluation_stacky}
			\wev_{p}:\QuotC\to \fr{X}
		\end{equation}
		by restricting a subsheaf $[S\to \ca{O}_{C}^{\oplus N}]$ to its stalk at the point $p$,  $S_p\to \ca{O}_{p}^{\oplus N}$.
		The cohomology classes considered in Theorem~\ref{thm:VI-formula_MO} are the pullbacks via $\wev_{p}$ of classes from $H^*(\fr{X},\bb{Z}) $, which is the ring of symmetric polynomials in $r$ variables.
	\end{rem}
	In the physics literature, Witten~\cite{Witten} observed a connection
	between Verlinde numbers and the quantum cohomology of Grassmannians.
	Verlinde numbers represent the dimensions of the space of global sections of
	theta bundles over the moduli space of vector bundles on curves.
	In~\cite{marian2,MarianOprea2},
	the authors showed that Verlinde numbers can be expressed via intersections on Quot schemes, providing another form of the Vafa–-Intriligator formula.

	\subsection{Euler characteristics over Quot schemes}
	Previous work on explicit formulas for Euler characteristics of \( K \)-theory classes (tautological bundles) on Quot schemes over curves was initiated in \cite{OpreaPandharipande} and \cite{OpreaShubham}
	for zero-rank quotients. The latter also derived some formulas for higher rank quotients over \(\mathbb{P}^1\).
	These formulas, for Quot schemes of zero-rank quotients on curves, utilize universality results in the sense of~\cite{EllingsrudGottscheLehn} and torus localization when $C=\bb{P}^1$ (see~\cite{OpreaShubham} for details). 
	
	For $\QuotC$ parameterizing higher rank quotients, i.e., $r<N$,
	a universality result is not expected in the same sense.
	In Part I~\cite{SinhaZhang}, we proved that the virtual Euler characteristics
	of certain $K$-theory classes over Quot schemes
	satisfy the topological quantum field theory (TQFT) axioms, allowing induction on the genus of $C$.
	In this paper, we use torus localization for $C=\bb{P}^1$ to derive a novel formula,
	serving as a $K$-theoretic analogue of the Vafa--Intriligator formula. 
	
	Recall the definition of $\fr{X}$ from Remark~\ref{rem:Grass_GIT_Coh}.
	The $K$-theory $K(\fr{X})$ is isomorphic to the representation ring $R(\GLr)$ of $\GLr$.
	For a $\GLr$-representation $\mathrm{V}$, this isomorphism maps it to the associated $K$-theory
	\begin{equation}
		\label
		{eq:associated-bundle-intro}
		\begin{aligned}
			\mathrm{V}\to M_{r\times N}\times_{\GLr} \mathrm{V}
			\in K(\fr{X}).
		\end{aligned}
	\end{equation}
	By abuse of notation, we also denote this $K$-theory class as $\mathrm{V}$.
	
	Let $\ovir_{\QuotC}$ denote the virtual structure sheaf induced by the two-term perfect obstruction theory for $\QuotC$ (see~\cite{Lee}).
	The virtual Euler characteristic of a coherent sheaf $F$ on $\QuotC$ is defined as
	\[
	\chi^{\vir}\big(\QuotC,F\big)
	:=
	\chi\big(\QuotC,\ovir_{\QuotC}\otimes F\big).
	\]
	Our first main result is the following Vafa--Intriligator type formula.
	\begin{theorem}\label{thm:Vafa_Intr_Ktheory_intro}
		For any $\GLr$-representation $\mathrm{V}$ and a point $p\in C$, we have 
		\[
		\chi^{\vir}\big(\QuotC,\wev_{p}^* (\mathrm{V})\big)
		=
		[t^d]
		\sum_{z_1,\dots,z_r}
		(v\cdot h^g)(z_1,\dots,z_r)\frac{\prod_{i=1}^{r}z_i^{N-r+d}\cdot\prod_{i\ne j}(z_i-z_j)}{\prod_{i=1}^{r}P'(z_i)},
		\]
		where 
		\begin{itemize}
			\item 
			the sum is over all $\binom{N}{r}$ sets of $r$ distinct roots $z_1,z_2,\dots,z_r$ of
			\begin{equation}
				\label{eq:intro-non-equivariant-bethe-equation}
				P(z)=(z-1)^N +(-1)^r z^{N-r}t=0,
			\end{equation}
			\item $v$ is the character of $\mathrm{V}$,
			\item $h$ is the character of a $\GLr$-representation that only depends on $N$ (see Section~\ref{subsec:VI-in-all-genera}),
			\item the operation $[t^{d}]$ on a symmetric Laurent polynomial in the $N$ roots of~\eqref{eq:intro-non-equivariant-bethe-equation} involves three steps:
			express the Laurent polynomial in terms of $t$ using~\eqref{eq:intro-non-equivariant-bethe-equation}, expand it as a power series around $t=0$,
			and extract the coefficient of $t^{d}$.
		\end{itemize} 
	\end{theorem}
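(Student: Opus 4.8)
The plan is to combine two ingredients that the excerpt has already flagged: the gluing/TQFT structure for these virtual Euler characteristics established in Part~I~\cite{SinhaZhang}, which handles arbitrary genus, and $K$-theoretic virtual torus localization on $\QuotC$ for $C=\bP^1$, which produces the explicit genus-zero formula. Since a curve of genus $g\ge 1$ carries no useful torus action, localization alone reaches only $\bP^1$, so both ingredients are genuinely needed. I would therefore first establish the genus-zero case directly. Applying $K$-theoretic virtual localization with respect to $T=(\bb{C}^*)^N$ acting on $\bb{C}^N$ (together with the $\bb{C}^*$ acting on $\bP^1$), along the lines of~\cite{OpreaShubham}, expresses $\chi^{\vir}\big(\Quot,\wev_p^*\mathrm{V}\big)$ as a sum over the $T$-fixed loci $F$, each contributing $\chi\!\left(F,\ \ovir_F\otimes \wev_p^*\mathrm{V}|_F\otimes \Lambda^\bullet\big((N_F^{\vir})^\vee\big)^{-1}\right)$, where $N_F^{\vir}$ is the virtual normal bundle. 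The fixed loci are indexed by Str\o{}mme's combinatorial data --- a choice of $r$ coordinate directions in $\bb{C}^N$ together with a distribution of the degree and of the $\bP^1$-weights --- so that along $F$ the universal subsheaf splits into equivariant line bundles; this renders $\wev_p^*\mathrm{V}|_F$, the two-term obstruction theory, and hence $N_F^{\vir}$ fully explicit.

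The crucial step is to recognize the resulting combinatorial sum as a sum of residues and to package all degrees simultaneously. Forming the generating series $\sum_{d\ge 0} t^d\,\chi^{\vir}\big(\Quot,\wev_p^*\mathrm{V}\big)$ and invoking the residue theorem, I expect the fixed-point sum to collapse into a sum over unordered $r$-element subsets $\{z_1,\dots,z_r\}$ of the $N$ roots of the single polynomial $P(z)=(z-1)^N+(-1)^r z^{N-r}t$. This $P$ is exactly the $t$-deformation of the classical equation $(z-1)^N=0$ governing the poles at $t=0$, whose degree-zero specialization recovers the Borel--Weil--Bott answer and serves as a consistency check. The Jacobian $\prod_i P'(z_i)$ in the denominator is precisely what the residue theorem produces, the weight $\prod_i z_i^{N-r+d}\prod_{i\ne j}(z_i-z_j)$ records the degree grading, and the operation $[t^d]$ then extracts the fixed-degree contribution. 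This yields the $g=0$ case of the formula and simultaneously reads off both the index set of root-tuples and the closed-form weight.

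To propagate to arbitrary genus I would invoke the TQFT structure proved in Part~I~\cite{SinhaZhang}. Over the ground ring $\bb{C}[[t]]$ the assignment $\mathrm{V}\mapsto \chi^{\vir}\big(\QuotC,\wev_p^*\mathrm{V}\big)$ defines a two-dimensional TQFT whose state space is a semisimple Frobenius algebra (the quantum $K$-ring), with idempotents indexed by the $t$-dependent root-tuples $(z_1,\dots,z_r)$. In this basis, attaching a handle acts diagonally, so raising the genus by one multiplies the summand at $(z_1,\dots,z_r)$ by a single genus-raising operator; its character is the factor $h$, depending only on $N$, identified in Section~\ref{subsec:VI-in-all-genera}. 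The genus-zero computation having fixed the index set and the weight, the gluing axiom then delivers the $h^g$ factor and completes the formula for all $g$.

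The main obstacle is the genus-zero resummation of the second paragraph: enumerating the fixed loci, computing their virtual normal bundles and the restriction of $\ovir$, and carrying out the residue manipulation that converts the sum over fixed points into the compact sum over roots of $P(z)$. In particular one must match the weight normalization coming from the $\bP^1$-factor and verify that the a priori $T$-equivariant localization sum is independent of the equivariant parameters, so that the non-equivariant closed form (with the ``$-1$'' shift in $(z-1)^N$ reflecting this specialization) is well defined. Confirming that the genus-raising operator is an \emph{honest} $\GLr$-character $h$, rather than merely a symmetric Laurent expression, is the remaining delicate point, and is where the Frobenius structure of Part~I feeds back into the formula.
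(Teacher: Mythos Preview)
Your two-step strategy---genus zero via torus localization, then genus induction via the TQFT structure of Part~I---is exactly what the paper does. A few technical choices differ, however, and knowing them will save you work. First, the paper localizes with respect to $T=(\bb{C}^*)^N$ alone, \emph{not} the extra $\bb{C}^*$ on $\bP^1$; the fixed loci are then products of projective spaces $\bP^{d_1}\times\cdots\times\bP^{d_r}$ indexed by $(\vec{d},I)$, and one integrates over them via Hirzebruch--Riemann--Roch rather than summing over isolated points. Second, the resummation step is carried out not by the residue theorem but by the multivariate Lagrange--B\"urmann formula: the change of variables $t_i=R(z_i)/z_i^{N-r}$ converts the sum over $\vec{d}$ into a single $[t^d]$ extraction, and setting $t_1=\cdots=t_N=t$ makes the $z_i$ the roots of $P(z)$. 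Third, the genus induction is more direct than your Frobenius-idempotent picture: Part~I supplies the identity $\langle \mathrm{V}\rangle^{\quot}_{g,d}=\langle \mathrm{V}\cdot\mathrm{H}^g\rangle^{\quot}_{0,d}$ with $\mathrm{H}=\sum_{\alpha\in\P_{r,N-r}}\mathrm{O}_\alpha\cdot\mathrm{O}^*_\alpha$ an explicit element of $R(\GLr)$, so $h$ is a genuine character by construction and no semisimplicity argument is needed. Your route via the larger torus and residues would also work, but the paper's choices keep the combinatorics lighter.
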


	Recall that $\QuotC$ is irreducible for sufficiently large $d$, and its virtual structure sheaf becomes the usual structure sheaf.
	A degree estimate from Theorem~\ref{thm:Vafa_Intr_Ktheory_intro} leads to an interesting vanishing result
	(see Corollary~\ref{cor:vanishin_higher_genus} for a stronger version):
	\[
	\chi(\QuotC, \left(\det \cS_{p}^{\vee}\right)^{m}) = 0
	\quad \text{and} \quad
	\chi(\QuotC, \Sym^m \left(\cS_{p}^{\vee}\right))=0
	\]
	for all $m < g$ and sufficiently large $d$ (relative to $g$). 
	\begin{rem}	
	When the genus of the curve $g\ge 1$, setting $\lambda=\emptyset$ in Corollary~\ref{cor:vanishin_higher_genus} implies 
			\[
			\chi(\QuotC, \cO_{\quot_{ d}}) = 0\qquad \text{when $d\gg 0$ with respect to $g$, $r$ and $n$}.
			\]
			Note that this contrasts with the genus zero case, when $\quot_{d}(\bb{P}^1, N,r)$ is a smooth projective rational variety with $\chi(\quot_{d}(\bb{P}^1, N,r), \cO_{\quot_{ d}}) = 1$. The above phenomenon is clear in the case $r=1$, when
			the Quot scheme $\QuotC$ is a
			projective bundle
			\[
			\QuotC\cong \bb{P}(\pi_*\ca{L}^{\oplus N})\to \mathrm{Pic}^d(C),
			\]
			where $d\ge2g-1$, $\ca{L}$ is a universal bundle on $\mathrm{Pic}^d(C)\times C$ and $\pi$ is the first projection.
		
		In general, there is a map $\phi :\QuotC\to \mathrm{Pic}^d(C)$ given by taking the determinant of the quotient. The vanishing result will follow if the following question has an affirmative answer: Does the derived pushforward $$R\phi_*(\mathcal{O}_{\quot})=\mathcal{O}_{\mathrm{Pic}^d(C)} \quad \text{ on $\mathrm{Pic}^d(C)$ when $d\gg0$?}$$ In~\cite{GangopadhyaySabastian}, the authors study certain properties of the fibers of this determinant map.
	\end{rem}

	\subsection{Schur bundles}
	A partition $\lambda=(\lambda_1,\dots,\lambda_r)$ is a sequence of integers such that $\lambda_1\geq \lambda_2\geq\cdots\geq
	\lambda_r\geq0$.
	Let $\prk$ be the set of partitions $\lambda$ with $r$ parts
	where $\lambda_1\leq N-r$.
	Let $\bS^\lambda$ be the Schur functor associated with the partition $\lambda$,
	and let $\mathrm{S}:=\left(\bb{C}^r\right)^{\vee}$ denote the dual of the standard $\GLr$ representation.
	Then, $\bS^\lambda\left( \mathrm{S}^\vee \right)$ is the irreducible representation with highest weight $\lambda$.
	
	The representation $\mathrm{S}$ corresponds to the universal subbundle $S$ on $\grass$,
	as seen from~\eqref{eq:associated-bundle-intro} and restricting to the semistable locus $\grass=[M_{r\times N}^{ss}/\text{GL}_r(\bb{C})]$.
	Schur bundles $\bS^\lambda(S^\vee)$ and $\bS^\lambda(S)$ on $\grass$ are defined similarly for any partition $\lambda$.
	Their cohomology groups can be described by the Borel--Weil--Bott theorem. In particular, we recall the following result of Kapranov which was used to study the derived category of Grassmannian:
	
	\begin{prop}[\cite{Kapranov2}]
		For any nonempty partition $\lambda\in \P_{r,N-r}$,
		we have
		\begin{enumerate}[\normalfont(i)]
			\item $
			H^i(\grass,\bS^{\lambda}(S))=0
			$ for all $i\geq 0$.
			\item \[
			H^i(\grass,\bS^{\lambda}(S^\vee))
			=\begin{cases}
				\bS^\lambda((\C^N)^\vee)
				& \quad\text{if}\ i=0,\\
				0
				& \quad\text{if}\ i>0.
			\end{cases}
			\]
		\end{enumerate}
		The above equalities hold as $\mathrm{GL}_N(\bb{C})$-representations.
	\end{prop}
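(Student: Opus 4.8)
The plan is to deduce both statements from the Borel--Weil--Bott theorem, realizing $\grass = \mathrm{GL}_N(\C)/P$ as a quotient by the maximal parabolic $P$ whose Levi factor is $L = \GLr \times \mathrm{GL}_{N-r}(\C)$. Under this description the irreducible homogeneous bundles on $\grass$ correspond to dominant weights of $L$, and with the conventions of Remark~\ref{rem:Grass_GIT_Coh} (where $S^\vee$ is attached to the standard representation $\rS^\vee = \C^r$ of the $\GLr$ factor, acting trivially through the $\mathrm{GL}_{N-r}(\C)$ factor) the Schur bundle $\bS^\lambda(S^\vee)$ corresponds to the $\mathrm{GL}_N(\C)$-weight $\mu = (\lambda_1,\dots,\lambda_r,0,\dots,0)$ with $N-r$ trailing zeros, while $\bS^\lambda(S) \cong \bS^\lambda(S^\vee)^\vee$ corresponds to the dual weight $\mu' = (-\lambda_r,\dots,-\lambda_1,0,\dots,0)$. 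First I would fix $\rho = (N-1,N-2,\dots,1,0)$ and record the theorem in the form I will use: denoting by $E_\nu$ the homogeneous bundle of weight $\nu$, if $\nu+\rho$ has a repeated entry then $H^i(\grass,E_\nu)=0$ for all $i$, and otherwise the unique permutation $w$ sorting $\nu+\rho$ into strictly decreasing order gives $H^{\ell(w)}(\grass,E_\nu) = \bS^{w(\nu+\rho)-\rho}\bigl((\C^N)^\vee\bigr)$ as the only nonzero cohomology.

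For part (ii) the verification is immediate. The successive differences of $\mu+\rho$ are $\lambda_i-\lambda_{i+1}+1\geq 1$ inside the first block, $\lambda_r+1\geq 1$ across the two blocks, and $1$ inside the trailing block, so $\mu+\rho$ is strictly decreasing, $w$ is the identity, $\ell(w)=0$, and the theorem yields $H^0(\grass,\bS^\lambda(S^\vee)) = \bS^\lambda\bigl((\C^N)^\vee\bigr)$ with all higher cohomology vanishing. Note that this step uses neither the nonemptiness of $\lambda$ nor the bound $\lambda_1\leq N-r$.

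The substance of the proposition is part (i), where I would show that $\mu'+\rho$ is singular. Writing it out, its last $N-r$ entries are $N-r-1,N-r-2,\dots,1,0$, and its $r$-th entry equals $N-r-\lambda_1$. Since $\lambda\in\prk$ is nonempty we have $1\leq \lambda_1\leq N-r$, hence $N-r-\lambda_1 \in \{0,1,\dots,N-r-1\}$; concretely it coincides with the $(r+\lambda_1)$-th entry $N-(r+\lambda_1)$, which lies in the trailing block because $r+1\leq r+\lambda_1\leq N$. Thus $\mu'+\rho$ has a repeated entry and is singular, so Borel--Weil--Bott forces $H^i(\grass,\bS^\lambda(S))=0$ for all $i\geq 0$.

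I expect the only real obstacle to be bookkeeping rather than mathematics: pinning down the precise dictionary between $S$, $S^\vee$ and dominant weights of $L$, including the global dualization forced by the associated-bundle conventions of Remark~\ref{rem:Grass_GIT_Coh} (so that $H^0$ comes out as $\bS^\lambda((\C^N)^\vee)$ rather than $\bS^\lambda(\C^N)$), and confirming the identification $\bS^\lambda(S)\cong\bS^\lambda(S^\vee)^\vee$ at the level of highest weights. Once the dictionary is fixed, both the regularity check in (ii) and the singularity check in (i) are short finite verifications, with the collision $N-r-\lambda_1\in\{0,\dots,N-r-1\}$ being the single decisive point. A sanity check on small cases, such as $\mathrm{Gr}(1,2)=\bP^1$ with $S=\O(-1)$ and $\lambda=(1)$, confirms both the vanishing in (i) and the conventions underlying (ii).
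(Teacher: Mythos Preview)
The paper does not supply its own proof of this proposition; it is quoted from Kapranov's work as a known input. Your Borel--Weil--Bott argument is correct and is exactly the standard proof: for (ii) the weight $(\lambda,0^{N-r})$ is already $\mathrm{GL}_N$-dominant, and for (i) the key observation that $N-r-\lambda_1\in\{0,\dots,N-r-1\}$ (using both $\lambda_1\ge 1$ and $\lambda_1\le N-r$) forces $\mu'+\rho$ to be singular, giving the vanishing.
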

	This implies that for any non-empty partition  $\lambda\in\P_{r,N-r}$:
	\begin{equation}
		\label{eq:classic-S}
		\chi(\grass,\bS^{\lambda}(S))=0\quad \text{and}\quad
		\chi(\grass,\bS^{\lambda}(S^\vee))=s_\lambda(1,\dots,1).
	\end{equation}
	In this paper, we consider ``quantum" versions of~\eqref{eq:classic-S}. For the rest of the introduction, we set $C=\mathbb{P}^1$ and work with $\quot_{ d}(\bP^1,N,r)$.
	Note that the representation $\mathrm{S}$, as a $K$-theory class on $\fX$,
	pulls back to the vector bundle $\cS_p$ over $\quot_{d}$ under the evaluation map $\wev_p$ in \eqref{eq:evaluation_stacky}. We will consider
	the Euler characteristics of the following Schur bundles over $\quot_{ d}(\bP^1,N,r)$:
	\[\bS^{\lambda}
	\left(
	\cS_{p}
	\right)=\wev_{p}^*\left(\bS^{\lambda}
	\left(
	\mathrm{S}
	\right)\right)
	\quad\text{and} \quad \bS^{\lambda}
	\left(
	\cS_{p}^\vee
	\right)=\wev_{p}^*\left(\bS^{\lambda}
	\left(
	\mathrm{S}^\vee
	\right)\right).
	\]
	\begin{theorem}
		\label
		{prop:intro-vanishing}
		Let $\lambda=(\lambda_{1},\dots,\lambda_{r})$ be a non-empty partition.
		We have 
		\begin{align*}
			\chi(\quot_{ d}(\bP^1,N,r), \bS^\lambda(\cS_p))=0,
		\end{align*} 
		if one of the following three conditions holds
		\begin{enumerate}[\normalfont(i)]
			\item $\lambda_{1}\le d+(N-r)$;
			\item $\lambda_{1}\le d+2(N-r)$ with additional conditions $r<N$, $d>0$ and $\lambda_r>0$;
			\item $\lambda_{1}\le d+2(N-r)$ with additional conditions $r<N$ and $d\ge r$.
		\end{enumerate}
	\end{theorem}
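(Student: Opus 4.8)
The plan is to feed the Schur bundle into the genus-zero case of the $K$-theoretic Vafa--Intriligator formula (Theorem~\ref{thm:Vafa_Intr_Ktheory_intro}) and to establish the vanishing by a residue-plus-degree computation. First, since $C=\bP^1$ the Quot scheme $\Quot$ is smooth of the expected dimension \cite{Stromme}, so its virtual structure sheaf is the ordinary one and $\chi=\chi^{\vir}$; it therefore suffices to compute $\chi^{\vir}$. Taking $\mathrm{V}=\bS^\lambda(\mathrm{S})$, whose character is $v=s_\lambda(z_1^{-1},\dots,z_r^{-1})$, and setting $g=0$ (so $h^g=1$), the formula reads
\[
\chi\big(\Quot,\bS^\lambda(\cS_p)\big)=[t^d]\,\Phi_\lambda(t),\qquad
\Phi_\lambda(t)=\sum_{z}s_\lambda(z_1^{-1},\dots,z_r^{-1})\,\frac{\prod_{i}z_i^{N-r+d}\prod_{i\ne j}(z_i-z_j)}{\prod_i P'(z_i)},
\]
the sum running over $r$-subsets of roots of $P(z)=(z-1)^N+(-1)^rz^{N-r}t$.

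Next I would turn the sum over roots into residues. For $t$ near $0$ all $N$ roots of $P$ cluster near $z=1$, so (the Vandermonde factor killing the diagonals) $\Phi_\lambda$ equals the normalized $r$-fold contour integral $\tfrac{1}{r!(2\pi i)^r}\oint\cdots\oint$ of $s_\lambda(z^{-1})\prod_iz_i^{N-r+d}\prod_{i\ne j}(z_i-z_j)\big/\prod_iP(z_i)$ over small circles about $z=1$. Expanding $1/P(z_i)=\sum_{k_i\ge0}(-1)^{(r+1)k_i}t^{k_i}z_i^{(N-r)k_i}(z_i-1)^{-N(k_i+1)}$ and extracting $t^d$ reduces $[t^d]\Phi_\lambda$ to a finite sum, over compositions $k=(k_1,\dots,k_r)$ of $d$, of iterated residues at $z_i=1$. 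Using the identity $s_\lambda(z^{-1})\prod_{i\ne j}(z_i-z_j)=(z_1\cdots z_r)^{r-1}\det(z_i^{r-j})\det(z_i^{-(\lambda_j+r-j)})$ to split the integrand into a product of two alternants, each residue factors over $i$, so that with $m_i:=N(k_i+1)$ and the single-variable evaluation $\tfrac{1}{2\pi i}\oint_{z=1}z^a(z-1)^{-m}\,dz=\binom{a}{m-1}$ one obtains
\[
[t^d]\Phi_\lambda=\frac{(-1)^{(r+1)d}}{r!}\sum_{|k|=d}\sum_{\sigma,\tau\in S_r}\mathrm{sgn}(\sigma\tau)\prod_{i=1}^r\binom{a_i}{m_i-1},\qquad a_i=(N-r)(k_i+1)+d+r-1+\tau(i)-\sigma(i)-\lambda_{\tau(i)} .
\]

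The heart of the argument is then a degree count. A direct computation gives the key identity $\sum_{i=1}^r\big(a_i-(m_i-1)\big)=-|\lambda|$, which is strictly negative since $\lambda$ is non-empty. Under hypothesis (i), $\lambda_1\le d+(N-r)$ forces $a_i\ge0$ for every $i$ (making $a_i<0$ would require $\lambda_{\tau(i)}>(N-r)(k_i+1)+d$, impossible here); for a non-negative upper index the binomial $\binom{a_i}{m_i-1}$ vanishes unless $a_i\ge m_i-1$. Were the product non-zero we would need $a_i-(m_i-1)\ge0$ for all $i$, contradicting that the sum of these quantities is $-|\lambda|<0$. Hence every term vanishes, proving (i).

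For (ii) and (iii) the weaker bound $\lambda_1\le d+2(N-r)$ only guarantees $a_i\ge0$ when $k_i\ge1$; for $k_i=0$ one can have $a_i<0$, in which case $\binom{a_i}{N-1}\ne0$ through the negative-argument branch, so the degree contradiction no longer kills those ``boundary'' terms. The main obstacle is to show that the surviving contributions, concentrated on the indices with $k_i=0$ and large $\lambda_{\tau(i)}$, cancel. The plan is to isolate these terms and construct a sign-reversing involution on the data $(k,\sigma,\tau)$ built from the extra hypotheses $r<N$ together with either $d>0,\ \lambda_r>0$ or $d\ge r$, pairing each surviving residue with an equal one of opposite sign. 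The roles of these hypotheses are precisely to ensure the partner composition is again a valid composition of $d$ (this is where $d\ge r$, resp.\ $d>0$ with $\lambda_r>0$, enters) and that the involution is fixed-point free (this is where $r<N$ enters). Carrying out this cancellation, and verifying that the boundary binomials are genuinely matched rather than merely bounded, is the most delicate step of the proof.
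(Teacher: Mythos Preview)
Your treatment of case~(i) is correct but follows a different route from the paper's. Rather than working directly with the residue expansion and binomial coefficients, the paper first proves an intermediate bialternant identity (Theorem~\ref{thm:Localization_calculation}): the sum over $r$-subsets in the Vafa--Intriligator formula collapses, via the generalized Laplace expansion along $r$ rows, to a single $N$-variable Schur function
\[
\chi\big(\Quot,\bS^\lambda(\cS_p)\big)=[t^d]\,s_{\tilde\lambda+(d)^r}(z_1,\dots,z_N),
\qquad \tilde\lambda+(d)^r=(d-\lambda_r,\dots,d-\lambda_1).
\]
From here the paper argues in two cases: if $-(N-r)\le d-\lambda_1<0$ the $r$-th row of the bialternant determinant duplicates one of the last $N-r$ rows; if $d-\lambda_1\ge0$ the second Jacobi--Trudi formula expresses $s_{\tilde\lambda+(d)^r}$ as a polynomial in the $e_j(z)$, only $e_r$ carries $t$ (linearly), and the highest power of $e_r$ is at most $d-1$. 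Your degree identity $\sum_i\bigl(a_i-(m_i-1)\bigr)=-|\lambda|$ is the residue-side shadow of this same $t$-degree bound, so both arguments work; the paper's has the advantage that it packages the cancellation of the $\sigma,\tau$ sums into the single Schur function and avoids the binomial bookkeeping.

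For cases~(ii) and~(iii) there is a genuine gap: you describe a plan to build a sign-reversing involution on the boundary terms but do not construct it, and you flag this yourself as ``the most delicate step.'' It is not clear that such an involution exists on your data $(k,\sigma,\tau)$, and the paper's proof does \emph{not} proceed by cancellation. Instead it uses a structural trick: since $r<N$, the product $\prod_{i=1}^N z_i$ is independent of $t$, so one may multiply by $(\prod z_i)^{N-r}$ at no cost and study $s_{\tilde\lambda+(d)^r+(N-r)^N}(z_1,\dots,z_N)$. After applying the straightening rules~\eqref{eq:straightening-rules} this becomes $\pm s_\nu$ for an honest partition $\nu$ whose conjugate $\nu'$ has a specific gap $\nu'_a\ge N-m$, $\nu'_{a+1}\le r-m$ (with $a=N-r-m$). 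In the second Jacobi--Trudi determinant for $s_\nu$ one then locates exactly which entries can equal $e_r$: the hypotheses $\lambda_r>0$ (case~(ii)) and $d\ge r$ (case~(iii)) translate into combinatorial constraints on this determinant forcing its $t$-degree below $d$. So the extra hypotheses enter not to pair up surviving terms, as you propose, but to control the shape of $\nu$; you should either carry out your involution explicitly or switch to the bialternant-plus-Jacobi--Trudi framework, where the argument is already complete.
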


	\begin{rem}
		In Section 3 and 4 of Part I~\cite{SinhaZhang},
		we showed that the vanishing results in Theorem~\ref{prop:intro-vanishing} recover
		important properties such as finiteness and ring presentation for the quantum $K$-ring of $\grass$,
		as obtained in~\cite{Buch-Mihalcea} and~\cite{GMSXZZ}.
		We expect similar vanishing results to apply more broadly
		to genus-zero quasimap spaces for flag varieties and symplectic/orthogonal Grassmannians,
		offering a new approach to studying the quantum $K$-theory of these varieties.
	\end{rem}
	\begin{rem}
		Consider the action of $T=(\C^*)^N$ on $\quot_d(\bP^1,N,r)$ defined via the diagonal action on $\cO_{\bP^1}^{\oplus N}$. In the main body, we prove the formulas for Euler characteristics in Theorem~\ref{prop:intro-vanishing} and \ref{thm:intro-2} in the torus-equivariant setting
		(see Theorems~\ref{prop:Vanshing_rparts} and \ref{thm:Localization_calculation}).
	\end{rem}
	\begin{theorem}
		\label
		{thm:intro-2}
		Let
		$\lambda=(\lambda_{1},\lambda_{2},\dots,\lambda_r)$
		be a partition, with no extra condition on $\lambda_1$.
		The following formula holds:
		\begin{align*}
			\chi(\quot_d(\bP^1,N,r), 
			\bS^{\lambda}(
			\cS_{p}^{\vee}
			)
			)
			&=
			[t^d]s_{\lambda+(d)^r}(z_1,\dots,z_N).
		\end{align*}
		Here $z_1,z_2,\dots,z_N$ are the distinct roots of the equation~\eqref{eq:intro-non-equivariant-bethe-equation},
		and $\lambda+(d)^r$ is the partition obtained by adding $d$ to each part, i.e.,
		$\lambda+(d)^r=(\lambda_1+d,\dots,\lambda_r+d)$.
	\end{theorem}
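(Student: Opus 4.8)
The plan is to obtain this genus-zero formula directly from the $K$-theoretic Vafa--Intriligator formula (Theorem~\ref{thm:Vafa_Intr_Ktheory_intro}) and then to recognize the resulting sum over $r$-subsets of Bethe roots as a single Schur polynomial in all $N$ roots. Since $\bS^\lambda(\cS_p^\vee)=\wev_p^*(\bS^\lambda(\mathrm S^\vee))$ and $\bS^\lambda(\mathrm S^\vee)$ is the irreducible $\GLr$-representation of highest weight $\lambda$, its character is the Schur polynomial $v(z_1,\dots,z_r)=s_\lambda(z_1,\dots,z_r)$. Specializing Theorem~\ref{thm:Vafa_Intr_Ktheory_intro} to $g=0$ (so that $h^g=1$), and using that $\quot_d(\bP^1,N,r)$ is smooth of the expected dimension by Strømme~\cite{Stromme}, so that $\chi^{\vir}=\chi$, yields
\[
\chi\big(\quot_d(\bP^1,N,r),\bS^\lambda(\cS_p^\vee)\big)=[t^d]\,\Sigma,\qquad
\Sigma:=\sum_{|I|=r}s_\lambda(z_I)\,\frac{\prod_{i\in I}z_i^{N-r+d}\prod_{i\ne j\in I}(z_i-z_j)}{\prod_{i\in I}P'(z_i)},
\]
where $I$ ranges over the $r$-element subsets of the $N$ roots of $P$ (distinct for generic $t$) and $z_I=(z_i)_{i\in I}$. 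It therefore suffices to prove the symmetric-function identity $\Sigma=s_{\lambda+(d)^r}(z_1,\dots,z_N)$ for generic $t$; this propagates to an identity of power series in $t$, to which $[t^d]$ is then applied.

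The first step is to write the summand as a product of two $r\times r$ minors. Using the bialternant $s_\lambda(z_I)=a_{\lambda+\delta}(z_I)/\Delta(z_I)$ with $\delta=(r-1,\dots,1,0)$, together with $\prod_{i\ne j\in I}(z_i-z_j)=(-1)^{\binom r2}\Delta(z_I)^2$, one finds $s_\lambda(z_I)\prod_{i\ne j\in I}(z_i-z_j)=(-1)^{\binom r2}a_{\lambda+\delta}(z_I)\,\Delta(z_I)$. Absorbing $\prod_{i\in I}z_i^{N-r+d}$ into the Vandermonde factor and $1/\prod_{i\in I}P'(z_i)$ into the alternant turns the summand into $(-1)^{\binom r2}\det\!\big(z_i^{\lambda_l+r-l}/P'(z_i)\big)_{i\in I,\,l}\cdot\det\!\big(z_i^{N-m+d}\big)_{i\in I,\,m}$. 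Applying the Cauchy--Binet formula to the $r\times N$ matrices $A_{l,i}=z_i^{\lambda_l+r-l}/P'(z_i)$ and $B_{m,i}=z_i^{N-m+d}$ collapses the subset sum into the single determinant
\[
\Sigma=(-1)^{\binom r2}\det\!\left(\sum_{i=1}^{N}\frac{z_i^{\lambda_l+r-l+N-m+d}}{P'(z_i)}\right)_{1\le l,m\le r}.
\]

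The entries are then evaluated by the classical weighted power-sum identity $\sum_{i=1}^{N}z_i^{p}/P'(z_i)=h_{p-N+1}(z_1,\dots,z_N)$, valid for a degree-$N$ polynomial with simple roots (it follows from partial fractions, with $h_k=0$ for $k<0$). Hence the $(l,m)$ entry is $h_{\lambda_l+d+r-l-m+1}$, and writing $\mu=\lambda+(d)^r$ gives $\Sigma=(-1)^{\binom r2}\det(h_{\mu_l+r-l-m+1})_{l,m}$. Reversing the column index via $j=r+1-m$ multiplies the determinant by $(-1)^{\binom r2}$ (the sign of the reversal permutation) and brings the entry to $h_{\mu_l-l+j}$; by the Jacobi--Trudi formula this determinant equals $s_\mu(z_1,\dots,z_N)$. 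The two factors $(-1)^{\binom r2}$ cancel, yielding $\Sigma=s_{\lambda+(d)^r}(z_1,\dots,z_N)$ and hence the theorem.

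The genuine inputs beyond formal manipulation are the Cauchy--Binet reduction and the residue identity $\sum_i z_i^p/P'(z_i)=h_{p-N+1}$; everything else is sign bookkeeping. I expect the delicate point to be tracking the $(-1)^{\binom r2}$ produced by replacing $\prod_{i\ne j}(z_i-z_j)$ with $\Delta^2$ and confirming that it cancels against the sign from reordering columns into the standard Jacobi--Trudi layout, together with verifying that the genericity argument (simple roots of $P$) is compatible with the $[t^d]$ operation, which is defined through $P$. The rest is standard symmetric-function algebra, and the case $r=1$ provides a quick consistency check, where $\Sigma=\sum_i z_i^{\lambda_1+N-1+d}/P'(z_i)=h_{\lambda_1+d}=s_{(\lambda_1+d)}$.
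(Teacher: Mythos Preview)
Your proof is correct and takes a genuinely different route from the paper's. Both start from the genus-zero Vafa--Intriligator sum over $r$-subsets of Bethe roots, but diverge in how they collapse it to a single Schur function. The paper plugs in the bialternant for $s_\lambda(z_I)$ and observes that the resulting sum is precisely the generalized Laplace expansion along the first $r$ rows of the $N\times N$ bialternant determinant for $s_{\lambda+(d)^r}(z_1,\dots,z_N)$; the complementary $(N-r)\times(N-r)$ Vandermonde minors combine with the denominator $\prod_{i\in I}P'(z_i)$ and the full Vandermonde to give exactly the weighting in the sum. Your approach instead packages the summand as a product of two $r\times r$ minors, applies Cauchy--Binet to land on a single $r\times r$ determinant with entries $\sum_i z_i^p/P'(z_i)$, evaluates these via the residue identity as complete homogeneous symmetric functions, and recognizes the first Jacobi--Trudi formula. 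The paper's argument is a one-step recognition staying inside the bialternant world; yours is a two-step reduction (Cauchy--Binet, then partial fractions) that lands directly on Jacobi--Trudi, which is in fact the form the paper uses in its subsequent applications (the corollaries on $\wedge^m$ and $\Sym^m$). Your sign bookkeeping and the genericity argument for simple roots are both fine: the identity $\Sigma=s_{\lambda+(d)^r}$ holds as an identity of rational functions in $t$ away from the discriminant locus of $P$, hence as formal power series, so $[t^d]$ applies without issue.
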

	Schur polynomials can be expressed as determinants of elementary symmetric polynomials using the classical Jacobi--Trudi formula. The elementary symmetric polynomial evaluated at the distinct roots $z_1,\dots,z_N$ of \eqref{eq:intro-non-equivariant-bethe-equation} has a very simple description:
	\begin{equation*}
		e_{j}(z_1,\dots,z_N)=\begin{cases}
			\binom{N}{j}
			& \quad \text{if}\ j\ne r,
			\\[5pt]	
			\binom{N}{r}
			+t& \quad \text{if}\ j= r.
		\end{cases}
	\end{equation*}
	As a direct application of Theorem~\ref{thm:intro-2},
	we obtain closed-form expressions
	for the Euler characteristics of
	the exterior and symmetric powers of $\cS_p^\vee$ over the Quot scheme.
	For torus-equivariant versions, see
	Corollary~\ref{cor:wedge-powers} and Corollary~\ref{cor:symmetric-power}.
	\begin{cor}
		We have
		\[
		\sum_{d=0}^{\infty}
		q^d
		\chi(\quot_d(\bP^1,N,r), \wedge^m(\cS_p^\vee))
		=\begin{cases}
			\frac{1}{1-q}
			\binom{N}{m}
			& \text{if } m\ne r\\[5pt]
			\frac{1}{(1-q)^2}
			\binom{N}{r}
			& \text{if }\ m=r
		\end{cases}
		\]
		and
		\begin{align*}
			\sum_{d=0}^{\infty}q^d
			\chi
			(\quot_d(\bP^1,N,r), \Sym^m(\cS_p^\vee))
			=\begin{cases}
				\frac{1}{1-q}
				\binom{N+m-1}{m}
				& \text{if } m< r\\[5pt]
				\frac{1}{1-q}
				\binom{N+r-1}{r}
				+
				\frac{(-1)^{r+1}q}{(1-q)^2}
				\binom{N}{r}
				& \text{if } m=r
			\end{cases}.
		\end{align*}
	\end{cor}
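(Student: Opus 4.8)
The plan is to feed both generating functions through Theorem~\ref{thm:intro-2} and then evaluate the resulting coefficients of $t^d$ by means of the Jacobi--Trudi formula. Writing $\wedge^m(\cS_p^\vee)=\bS^{(1^m)}(\cS_p^\vee)$ and $\Sym^m(\cS_p^\vee)=\bS^{(m)}(\cS_p^\vee)$ (in the exterior case $m\le r$, so that the Schur functor is nonzero), Theorem~\ref{thm:intro-2} reduces the two statements to computing $[t^d]\,s_{(1^m)+(d)^r}(z_1,\dots,z_N)$ and $[t^d]\,s_{(m)+(d)^r}(z_1,\dots,z_N)$ and then summing against $q^d$, using $\sum_d q^d=(1-q)^{-1}$, $\sum_d (d+1)q^d=(1-q)^{-2}$ and $\sum_d d\,q^d=q(1-q)^{-2}$.

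The key step is the dual (N\"agelsbach--Kostka) Jacobi--Trudi identity $s_\nu=\det\big(e_{\nu'_i-i+j}\big)$, which is well suited here because the excerpt records that every $e_j(z_1,\dots,z_N)$ equals a binomial coefficient except for $e_r$, which carries the single deformation term $t$. The conjugate of $(1^m)+(d)^r$ is $(r^d,m)$ and the conjugate of $(m)+(d)^r$ is $(r^d,1^m)$, so the two Schur polynomials become determinants of sizes $d+1$ and $d+m$ respectively, each of the form $M_0+t\Delta$, where $M_0$ has the binomial entries $\binom{N}{\,\cdot\,}$ and $\Delta$ is the $0/1$ matrix flagging those entries whose $e$-subscript equals $r$. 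Expanding the determinant multilinearly in the flagged columns, $[t^d]\det(M_0+t\Delta)$ becomes a signed sum, over the size-$d$ subsets $L$ of the flagged positions, of the complementary minors of $M_0$.

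It then remains to locate the flagged positions and evaluate the surviving minors. In every row with $\nu'_i=r$ (the top $d$ rows) the subscript equals $r$ only on the diagonal, while the remaining rows contribute a flagged entry only when $m=r$, and then at the single off-diagonal slot $(d+1,d+r)$ (in the wedge case this slot lands back on the diagonal). Hence when $m\ne r$ there are exactly $d$ flagged positions, all diagonal, forcing $L$ to be all of them; the complementary minor is the single entry $\binom{N}{m}$ in the wedge case, and in the symmetric case it is the bottom $m\times m$ block, whose determinant $\det(e_{b-a+1})_{1\le a,b\le m}=h_m$ is $t$-free (since $m<r$) and equals $\binom{N+m-1}{m}$; both are independent of $d$, giving the first branch of each formula. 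When $m=r$ in the wedge case all $d+1$ flagged positions are diagonal, each of the $d+1$ choices of $L$ leaves a single diagonal entry $\binom{N}{r}$ with sign $+1$, and summing gives $(d+1)\binom{N}{r}$, i.e. $(1-q)^{-2}\binom{N}{r}$.

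The one delicate point, and the main obstacle, is the symmetric case with $m=r$: here $L$ is either the full top diagonal, whose complementary minor is the bottom $r\times r$ block $\binom{N+r-1}{r}$ with sign $+1$, or it omits one diagonal slot $(p,p)$ and includes the special slot $(d+1,d+r)$. For the latter one checks that the complementary minor is block-triangular: the entries below the distinguished $\binom{N}{r}$ in column $p$ vanish because their $e$-subscripts are negative, and the residual block $\det(e_{b-a+1})$ is unitriangular with determinant $1$, so the minor equals $\binom{N}{r}$ for every $p$. The generalized Laplace sign of this term is $(-1)^{r+1}$, uniformly in $p$: the row- and column-index sums have total parity congruent to $r+1$, and the induced matching of the flagged rows to the flagged columns is order-preserving and hence even. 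Summing the $d$ equal contributions produces $(-1)^{r+1}d\binom{N}{r}$, so $[t^d]\,s_{(r)+(d)^r}=\binom{N+r-1}{r}+(-1)^{r+1}d\binom{N}{r}$, which assembles into $(1-q)^{-1}\binom{N+r-1}{r}+(-1)^{r+1}q(1-q)^{-2}\binom{N}{r}$, as claimed.
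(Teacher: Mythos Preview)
Your argument is correct and follows essentially the same route as the paper: both proofs feed the Euler characteristic through the bialternant formula of Theorem~\ref{thm:intro-2}, apply the dual (second) Jacobi--Trudi identity to the partition $\lambda+(d)^r$, and exploit that only $e_r$ carries the $t$-dependence. The only difference is presentational: in the symmetric case the paper organises the computation via a Laplace expansion along the first $d$ rows of the Jacobi--Trudi determinant, whereas you track the flagged positions and their complementary minors individually; the resulting term-by-term contributions and the delicate sign $(-1)^{r+1}$ in the $m=r$ case match exactly.
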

	
	\begin{prob}\label{prob:vanishing}
		The formulas in Theorems~\ref{prop:intro-vanishing} and~\ref{thm:intro-2}
		match~\eqref{eq:classic-S} when $d=0$.
		We aim to characterize conditions under which the higher cohomology groups in
		Theorem~\ref{thm:intro-2} and \ref{prop:intro-vanishing} vanish.
		Inspired by the Borel--Weil--Bott Theorem for degree zero and
		examples in higher degree cases, we conjecture
		that $$H^i(\quot_{ d}(\bP^1,N,r), \bS^\lambda(\cS_p))=0$$ for all $i\ge 0$ and partitions $\lambda$ with $\lambda_1\le d+N-r$.
	\end{prob}
	The formulas for computing individual sheaf cohomology groups of the tautological bundles on Quot schemes on $\bb{P}^1$ were obtained recently in \cite{Gautam_Lin_Sinha}, and \cite{Marian-Oprea-Sam}. In particular, \cite[Theorem~1.11]{Gautam_Lin_Sinha} partially solves Problem~\ref{prob:vanishing}.
	
	Let $\pi$ and $\pi_{\bP^1}$ be the first and second projection from $\quot_{ d}(\bP^1,N,r)\times\bP^1$ to $\quot_{ d}(\bP^1,N,r)$ and $\bP^1$,
	respectively, and let $\pi_{\star}$ denote the proper pushforward in
	$K$-theory.
	For any integer $\ell$, the determinant line bundle $\det(\pi_{\star}\cS^\vee)^{-\ell}$ is referred to as
	a level structure in~\cite{RZ1,RZ2}.
	We also derive bialternant-type formulas for
	$$\chi^{T}(\quot_d(\bP^1,N,r), 
	\det(\pi_{\star}\cS^\vee)^{-\ell}
	\cdot
	\bS^{\lambda}(
	\cS_{p}^{\vee}
	)
	)$$
	in Theorem~\ref{thm:Localization_calculation} and Corollary~\ref{cor:EulerCharShurBundle}.
	This generalization is useful for computing the Euler characteristics of the determinants
	of tautological bundles
	as considered in~\cite{OpreaShubham}. Let $M$ be a line bundle on $\bP^{1}$ and
	define $M^{[d]}:=
	\pi_{\star} \left(\pi_{\bP^1}^{*} M\otimes \mathcal Q\right)$.
	\begin{cor}
		If the level $\ell$ satisfies $-r<\ell\le (N-r)$, then
		\[
		\chi
		(\quot_{ d}(\bP^1,N,r),(\det M^{[d]})^\ell )= [t^d]s_{
			\left((m+1)\ell+d\right)^r
		}(z_1,\dots,z_N),
		\]
		where
		$z_1,z_2,\dots,z_N$ are the roots of~\eqref{eq:intro-non-equivariant-bethe-equation}.
	\end{cor}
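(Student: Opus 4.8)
The plan is to identify the line bundle $\det M^{[d]}$ explicitly in $\mathrm{Pic}(\Quot)$ and then feed the result into the level-structure bialternant formula of Corollary~\ref{cor:EulerCharShurBundle}. Write $m=\deg M$, so $M\cong\mathcal{O}_{\bP^1}(m)$, and recall the universal sequence $0\to\cS\to\mathcal{O}^{\oplus N}\to\mathcal{Q}\to0$ on $\Quot\times\bP^1$, which gives $[\mathcal{Q}]=N[\mathcal{O}]-[\cS]$ in $K$-theory. Tensoring with $\pi_{\bP^1}^*M$ and applying $\pi_*$ yields $M^{[d]}=N\,\pi_*\pi_{\bP^1}^*\mathcal{O}(m)-\pi_*(\cS(m))$, where $\cS(m):=\pi_{\bP^1}^*\mathcal{O}(m)\otimes\cS$. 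Since $\pi_*\pi_{\bP^1}^*\mathcal{O}(m)$ is a (possibly shifted) trivial bundle, its determinant is $\mathcal{O}_{\Quot}$, so the problem reduces to computing $\det M^{[d]}=(\det\pi_*(\cS(m)))^{-1}$.

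The heart of the argument is the determinant-of-cohomology computation. On $\bP^1$ one has $[\mathcal{O}(m)]=1+m[\mathcal{O}_p]$ in $K$-theory, hence $[\cS(m)]=[\cS]+m\,i_*[\cS_p]$ where $i\colon\Quot\times\{p\}\hookrightarrow\Quot\times\bP^1$. Pushing forward and using $\pi_*i_*=\mathrm{id}$ gives $\pi_*(\cS(m))=\pi_*\cS+m[\cS_p]$, so that $\det\pi_*(\cS(m))=\det\pi_*\cS\otimes(\det\cS_p)^{m}$. To rewrite $\det\pi_*\cS$ in terms of the level-structure bundle $\det\pi_*\cS^\vee$, I apply relative Serre duality for $\pi$ (relative dualizing sheaf $\omega_\pi=\pi_{\bP^1}^*\mathcal{O}(-2)$), which identifies $\det\pi_*\cS^\vee$ with $\det\pi_*(\cS(-2))=\det\pi_*\cS\otimes(\det\cS_p)^{-2}$; equivalently, a Grothendieck--Riemann--Roch computation of $c_1$ gives $c_1(\det\pi_*\cS^\vee)-c_1(\det\pi_*\cS)=2\,c_1(\det\cS_p^\vee)$. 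Combining these identities produces $\det M^{[d]}=\det(\pi_*\cS^\vee)^{-1}\otimes(\det\cS_p^\vee)^{\,m+2}$.

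Raising to the $\ell$-th power and using that a rectangular Schur functor is a determinant power, $\bS^{(a)^r}(\cS_p^\vee)=(\det\cS_p^\vee)^{a}$, I obtain $(\det M^{[d]})^{\ell}=\det(\pi_*\cS^\vee)^{-\ell}\otimes\bS^{((m+2)\ell)^r}(\cS_p^\vee)$. This is exactly the class whose virtual Euler characteristic Corollary~\ref{cor:EulerCharShurBundle} (equivalently Theorem~\ref{thm:Localization_calculation}) evaluates in the admissible level range $-r<\ell\le N-r$: the factor $\det(\pi_*\cS^\vee)^{-\ell}$ shifts every part of the partition by $-\ell$, so that $\chi(\Quot,(\det M^{[d]})^{\ell})=[t^d]s_{((m+2)\ell-\ell+d)^r}(z_1,\dots,z_N)=[t^d]s_{((m+1)\ell+d)^r}(z_1,\dots,z_N)$, as claimed. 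The key arithmetic is simply $(m+2)\ell-\ell=(m+1)\ell$.

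The main obstacle is the second step: pinning down the exact exponents in the determinant-of-cohomology identity, and in particular getting the Serre-duality twist right so that the coefficient of $\det\cS_p^\vee$ is $m+2$ rather than $m+1$. The visible $(m+1)$ in the statement then emerges only after the level structure absorbs a shift of $\ell$, and the hypothesis $-r<\ell\le N-r$ is used precisely to stay in the range where Corollary~\ref{cor:EulerCharShurBundle} applies and the answer is the clean rectangular bialternant.
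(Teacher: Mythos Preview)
Your proof is correct and follows essentially the same route as the paper: you establish the identity $\det M^{[d]}=\det(\pi_*\cS^\vee)^{-1}\otimes(\det\cS_p^\vee)^{m+2}$ (the paper derives this via the short exact sequences $0\to\cS(j-1)\to\cS(j)\to\cS_p\to0$ and induction, which is equivalent to your $K$-theory manipulation using $[\mathcal{O}(m)]=1+m[\mathcal{O}_p]$, together with the same Serre-duality step $\det\pi_*\cS(-2)=\det\pi_*\cS^\vee$), and then plug into Theorem~\ref{thm:Localization_calculation} with $\lambda=((m+2)\ell)^r$. One small labeling point: the correct reference for the final step is Theorem~\ref{thm:Localization_calculation} (for $\bS^\lambda(\cS_p^\vee)$) rather than Corollary~\ref{cor:EulerCharShurBundle} (which is stated for $\bS^\lambda(\cS_p)$), but you already note this and your arithmetic $(m+2)\ell+(d-\ell)=(m+1)\ell+d$ is exactly right.
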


	\subsection{Application to quantum $K$-theory}
	The $K$-group of $X=\grass$ has a basis $\{\O_{\lambda}\}_{\lambda\in \prk}$,
	where $\O_{\lambda}:=[\O_{X_{\lambda}}]\in K(X)$
	represents the $K$-theory class of the structure sheaf of the Schubert variety $X_{\lambda}\subset X$.
	These $\O_\lambda$ are referred to as \emph{Schubert structure sheaves}.
	As shown in~\cite[\textsection{8}]{Lenart}, for any partition $\lambda$, there is a polynomial functor $\bG^{\lambda}$, defined via \emph{stable Grothendieck polynomials}, such that for $\lambda \in \prk$, we have
	\begin{equation*}
		\O_{\lambda}
		=
		\bG^{\lambda}(S^{\vee})
	\end{equation*}
	in $K(X)$.
	We lift this to the level of representation and define
	$$\rO_{\lambda}:=\bG^{\lambda}(\C^r)\in R(\GLr),$$
	for any partition $\lambda$ with $r$ parts.
	Here $\C^r$ denotes the standard representation of $\GLr$.
	Note that $\rO_\lambda$ is nonzero even for $\lambda \notin \prk$,
	and when $\lambda \in \prk$,
	the Schubert structure sheaf $\O_\lambda$ corresponds to the $K$-theory class associated with $\rO_\lambda$ on $X$.
	
	Let $\wev_p$ be the stacky evaluation morphism in~\eqref{eq:evaluation_stacky}.
	We define the 1-pointed Quot
	scheme invariant of $\rO_{\lambda}$ by
	\[
	\langle
	\rO_\lambda
	\rangle^{\quot}_{0,d}
	:=
	\chi
	\left(
	\Quot,
	\wev_{p}^*(\rO_\lambda)
	\right).
	\]
	Note that 
	\[
	\wev_{p}^*(\rO_\lambda )
	= 	\bG^{\lambda}
	(
	\cS_{p}^{\vee}
	).
	\]
	As a result of Theorem~\ref{thm:Vafa_Intr_Ktheory_intro},
	we obtain the following bialternant-type formula of the 1-pointed Quot
	scheme invariant of $\rO_{\lambda}$ (see Theorem~\ref{cor:G-lambda}):

	\begin{cor}
		\label
		{cor:intro-stacky-schubert-structure-sheaf}
		For any partition $\lambda=(\lambda_{1},\dots,\lambda_{r})$ with $r$ (nonnegative) parts,
		we have
		\[
		\chi
		\big(
		\Quot,
		\bG^{\lambda}
		(
		\cS_{p}^{\vee}
		)
		\big)=[t^d]s_{(f_1,\dots, f_r)} (z_1,\dots,z_N)
		\]
		where
		$
		f_i(z)=
		z^d(1-1/z)^{\lambda_i+r-i},$
		$s_{(f_1,\dots,f_r)}$ is the generalized Schur function~\eqref{eq:generalized-schur-function},
		and $z_1,z_2,\dots,z_N$ are the distinct roots of \eqref{eq:intro-non-equivariant-bethe-equation}.
		
	\end{cor}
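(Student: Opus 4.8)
The plan is to deduce the corollary from the $K$-theoretic Vafa--Intriligator formula (Theorem~\ref{thm:Vafa_Intr_Ktheory_intro}) together with the Schur-bundle formula (Theorem~\ref{thm:intro-2}), passing through the Schur expansion of the stable Grothendieck polynomial. First I would specialize Theorem~\ref{thm:Vafa_Intr_Ktheory_intro} to $C=\bP^1$, where $g=0$ (so $h^g=1$) and the Quot scheme is smooth by Str\o mme, whence $\ovir=\O$ and $\chi^{\vir}=\chi$. Taking $\mathrm{V}=\rO_\lambda=\bG^\lambda(\C^r)$, its character is the stable Grothendieck polynomial $G_\lambda(z_1,\dots,z_r)$. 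For a fixed $r$-subset $I$ of the roots of~\eqref{eq:intro-non-equivariant-bethe-equation}, writing $P'(z_i)=\prod_{k\ne i}(z_i-z_k)$ and splitting the product over $I$ and its complement $I^c$, the factor $\prod_{i\ne j\in I}(z_i-z_j)$ cancels against the like factor in $\prod_{i\in I}P'(z_i)$, leaving the summand $G_\lambda(z_I)\,\prod_{i\in I}z_i^{N-r+d}\big/\prod_{i\in I,\,k\in I^c}(z_i-z_k)$.

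Next I would linearize. In $R(\GLr)$ the stable Grothendieck polynomial expands as $\bG^\lambda(\C^r)=\sum_\mu b_{\lambda\mu}\,\bS^\mu(\C^r)$, a finite sum with $b_{\lambda\lambda}=1$ and $b_{\lambda\mu}=0$ unless $\mu\supseteq\lambda$; pulling back along $\wev_p$ gives $\bG^\lambda(\cS_p^\vee)=\sum_\mu b_{\lambda\mu}\,\bS^\mu(\cS_p^\vee)$ in $K(\quot_d(\bP^1,N,r))$. Since $\chi$ is additive on $K$-theory, Theorem~\ref{thm:intro-2} yields
\[
\chi\big(\quot_d(\bP^1,N,r),\bG^\lambda(\cS_p^\vee)\big)=[t^d]\sum_\mu b_{\lambda\mu}\,s_{\mu+(d)^r}(z_1,\dots,z_N).
\]
This reduces the corollary to the purely symmetric-function identity $\sum_\mu b_{\lambda\mu}\,s_{\mu+(d)^r}(z_1,\dots,z_N)=s_{(f_1,\dots,f_r)}(z_1,\dots,z_N)$ with $f_i(z)=z^d(1-1/z)^{\lambda_i+r-i}$.

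Finally I would prove this identity from the bialternant definition~\eqref{eq:generalized-schur-function}. Clearing the common Vandermonde denominator, both sides become $N\times N$ determinants whose last $N-r$ columns agree, so by the generalized Laplace (Lindstr\"om) expansion along the first $r$ columns each side is a sum over $r$-subsets $I$. On the left, using the expansion of $s_{\mu+(d)^r}$ exactly as in the proof of Theorem~\ref{thm:intro-2}, the $I$-term equals $\sum_\mu b_{\lambda\mu}\,s_\mu(z_I)\prod_{i\in I}z_i^{N-r+d}\big/\prod_{i\in I,k\in I^c}(z_i-z_k)=G_\lambda(z_I)\prod_{i\in I}z_i^{N-r+d}\big/\prod_{i\in I,k\in I^c}(z_i-z_k)$, which matches the summand from the first step. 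On the right, the $I$-term is $\det\!\big(f_j(z_i)\big)_{i\in I}\big/\prod_{i<j\in I}(z_i-z_j)$ times $1\big/\prod_{i\in I,k\in I^c}(z_i-z_k)$; factoring $z_i^d$ out of row $i$ and setting $w_i=1-1/z_i$ (so $w_i-w_j=(z_i-z_j)/z_iz_j$) turns the first factor into a Schur bialternant in the $w_i$. The two subset-sums then coincide once the blocks match. The main obstacle is precisely this block matching, i.e.\ the determinantal identity $\det\!\big(f_j(z_i)\big)_{i\in I}\big/\prod_{i<j\in I}(z_i-z_j)=G_\lambda(z_I)\prod_{i\in I}z_i^{N-r+d}$ for the stable Grothendieck polynomial under the substitution $w=1-1/z$: one must track the monomial normalizations (the powers of $z_i$ produced by the $z^d$ prefactor and by $w_i-w_j=(z_i-z_j)/z_iz_j$) and confirm they reproduce the weight $\prod_{i\in I}z_i^{N-r+d}$ in the normalization dictated by~\eqref{eq:generalized-schur-function}. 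Everything else is formal.
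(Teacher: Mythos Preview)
Your plan is workable but circuitous, and two concrete errors need correction. First, the character of $\rO_\lambda=\bG^\lambda(\C^r)$ is not $G_\lambda(z_1,\dots,z_r)$ but $G_\lambda(1-z_1^{-1},\dots,1-z_r^{-1})$; see~\eqref{eq:character-G-functor}. You do invoke the substitution $w=1-1/z$ at the end, but it must enter from the first step, not the last. Second, and relatedly, your Schur expansion $\bG^\lambda(\C^r)=\sum_\mu b_{\lambda\mu}\bS^\mu(\C^r)$ with partitions $\mu\supseteq\lambda$ and $b_{\lambda\lambda}=1$ is wrong: since the character is a polynomial in the $x_i^{-1}$, the expansion is in $\bS^\nu((\C^r)^\vee)=\bS^{\tilde\nu}(\C^r)$ with $\nu\subseteq(\lambda_1)^r$ (Lemma~\ref{lem:leading-term-one}). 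You are confusing the expansion~\eqref{eq:schur-expansion} of $G_\lambda(x)$ in $s_\mu(x)$ with the expansion of $G_\lambda(1-x^{-1})$ in $s_\mu(x)$. Theorem~\ref{thm:Localization_calculation} still applies to the resulting weakly decreasing sequences $\tilde\nu$, so the strategy survives, but the bookkeeping you wrote is incorrect.

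More to the point, the detour through the Schur expansion and Theorem~\ref{thm:intro-2} is unnecessary. The paper's proof (Theorem~\ref{cor:G-lambda}) proceeds directly: plug the Weyl-type bialternant~\eqref{eq:G-lambda-bialternant-formula} for $G_\lambda$, evaluated at $1-z_j^{-1}$, straight into the Vafa--Intriligator sum of Theorem~\ref{thm:Vafa_Int_formula_equivariant}. After using $\det\big((1-z_j^{-1})^{r-i}\big)=\prod_{i<j}(z_j^{-1}-z_i^{-1})$ and absorbing the weight $\prod_i z_i^{N-1+d}$ into the columns, each $I$-summand becomes the $r\times r$ minor $\det\big((1-z_j^{-1})^{\lambda_i+r-i}z_j^{N+d-i}\big)_{j\in I}$ over $\prod_{i<j\in I}(z_i-z_j)\prod_{i\in I}P'(z_i)$. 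Since $f_i(z_j)z_j^{N-i}=(1-z_j^{-1})^{\lambda_i+r-i}z_j^{N+d-i}$, this is precisely the generalized Laplace expansion of $s_{(f_1,\dots,f_r)}(z_1,\dots,z_N)$ along its first $r$ rows, exactly as in the proof of Theorem~\ref{thm:Localization_calculation}. Your ``main obstacle''---the block-matching identity---is therefore nothing more than the bialternant formula~\eqref{eq:G-lambda-bialternant-formula} with the change of variables already made; there is no obstacle, and no need to pass through the Schur expansion at all.
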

	
	Let $q$ be a formal parameter representing the Novikov variable. Givental \cite{WDVV} and Lee~\cite{Lee}
	introduced the \emph{quantum product} on the $\Z[[q]]$-module $\qk(X):=K(X)\otimes_{\Z}\Z[[q]]$, a deformation of the tensor product, defined using generating functions of 2-pointed and 3-pointed genus-0 quantum $K$-invariants of $X$
	(see Section~\ref{subsec:quantum-k-ring}).
	The \emph{quantum $K$-ring} $\qk(X)$ is the $\Z[[q]]$-algebra equipped with this new quantum product.
	Buch and Mihalcea~\cite{Buch-Mihalcea} were the first to study the quantum $K$-ring of the Grassmannian.
	
	In Section~\ref{subsec:quantum-reduction-map}, we propose a strategy to compute the quantum product of Schubert structure sheaves
	using the Littlewood--Richardson rule for Grothendieck polynomials~\cite{Buch}
	and finitely many 1-pointed Quot scheme invariants obtained via Corollary~\ref{cor:intro-stacky-schubert-structure-sheaf}.
	To illustrate this strategy, we provide a complete description of the quantum product in the rank 2 case.	
	\begin{theorem}\label{thm:intro_G(2,N)}
		Let $\lambda, \mu\in \P_{2,N-2}$ such that $\mu_{1}-\mu_{2}\le \lambda_{1}-\lambda_2$.
		Then the product in the quantum $K$-ring $\qk(\mathrm{Gr}(2,N))$ is given by
		\begin{equation*}
			\cO_{\lambda}\bullet \cO_{\mu}=\ca{G}_{\lambda,\mu}+ \ca{G}_{\tilde{\lambda},\mu}q+\ca{G}_{\lambda-(N,N),\mu}q^2.
		\end{equation*}
		Here $\tilde{\lambda} = (\lambda_2-1,\lambda_1-N+1)$
		and $\ca{G}_{\alpha,\beta}$ is an element in $K(\mathrm{Gr}(2,N))$ defined by
		\[ 
		\ca{G}_{\alpha,\beta}
		:= \cO_{\alpha+\beta}+
		\sum_{i=1}^{\min\{\beta_1-\beta_2,\alpha_{1}-\alpha_{2}\}}
		\Big[\cO_{\alpha+\beta+(-i,i)}-
		\cO_{\alpha+\beta+(-i+1,i)}\Big],
		\]
		where we set $\cO_\nu =0$ if $\nu\notin \P_{2,N-2}$.
		
	\end{theorem}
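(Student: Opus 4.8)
The plan is to compute the structure constants of the quantum $K$-product by reducing them to the one-pointed Quot invariants $\langle\rO_\rho\rangle^{\quot}_{0,d}$ evaluated in Corollary~\ref{cor:intro-stacky-schubert-structure-sheaf}, and then to repackage the resulting numbers in the Schubert basis via the classical Littlewood--Richardson rule for Grothendieck polynomials~\cite{Buch}. Since the quantum product is commutative, the hypothesis $\mu_1-\mu_2\le\lambda_1-\lambda_2$ is harmless and merely fixes which factor is to be ``shifted''. Writing $\cO_\lambda\bullet\cO_\mu=\sum_{d\ge0}q^d\,(\cO_\lambda\bullet\cO_\mu)_d$ with $(\cO_\lambda\bullet\cO_\mu)_d\in K(\grtwo)$, I would first use the comparison between genus-zero quantum $K$-invariants and Euler characteristics over $\quot_d(\bP^1,N,2)$ from Part~I~\cite{SinhaZhang}, together with the $q$-deformed $K$-metric, to express each $(\cO_\lambda\bullet\cO_\mu)_d$ through Euler characteristics of tautological bundles on $\quot_d(\bP^1,N,2)$. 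Expanding the relevant products of Grothendieck polynomials by the rule of~\cite{Buch} rewrites these as finite integral combinations of one-pointed invariants $\chi(\quot_d(\bP^1,N,2),\bG^\rho(\cS_p^\vee))=\langle\rO_\rho\rangle^{\quot}_{0,d}$, each computed explicitly by the bialternant formula of Corollary~\ref{cor:intro-stacky-schubert-structure-sheaf}.

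The second step is finiteness. For $r=2$ and $\lambda,\mu\in\P_{2,N-2}$, I would invoke the vanishing of Theorem~\ref{prop:intro-vanishing} together with the accompanying degree estimates to show that $(\cO_\lambda\bullet\cO_\mu)_d=0$ for all $d\ge 3$: once the parts of $\lambda$ and $\mu$ are bounded by $N-2$, the $[t^d]$-extraction in the bialternant formula is supported only in degrees $d\in\{0,1,2\}$. This reduces the theorem to identifying the three coefficients $(\cO_\lambda\bullet\cO_\mu)_0$, $(\cO_\lambda\bullet\cO_\mu)_1$, and $(\cO_\lambda\bullet\cO_\mu)_2$.

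The $d=0$ term is the ordinary $K$-theoretic product $\cO_\lambda\cdot\cO_\mu$, whose Schubert expansion in $K(\grtwo)$ is exactly $\ca{G}_{\lambda,\mu}$ by the rank-two Littlewood--Richardson rule; this also accounts for the telescoping shape of the definition of $\ca{G}_{\alpha,\beta}$. For $d=1$ and $d=2$ the content is a $K$-theoretic rim-hook reduction: I would show that the degree-$d$ part equals the same classical product formula applied to $\cO_\mu$ and a rim-hook-reduced version of $\lambda$. Tracking the content, $|\tilde\lambda|=|\lambda|-N$ and $|\lambda-(N,N)|=|\lambda|-2N$, matching the removal of one and two length-$N$ rim hooks, so the natural candidates are the shifts $\lambda\mapsto\tilde\lambda=(\lambda_2-1,\lambda_1-k-1)$ with $k=N-2$ and $\lambda\mapsto\lambda-(N,N)$. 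Carrying out the coefficient-of-$t^d$ computation of the $s_{\lambda+(d)^2}$-type Schur functions at the roots of~\eqref{eq:intro-non-equivariant-bethe-equation}, using the explicit values of $e_j(z_1,\dots,z_N)$ and the Jacobi--Trudi expansion, I expect the sum to collapse onto the classical products $\ca{G}_{\tilde\lambda,\mu}$ and $\ca{G}_{\lambda-(N,N),\mu}$, with the convention $\cO_\nu=0$ for $\nu\notin\P_{2,N-2}$ truncating the out-of-range terms.

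The main obstacle will be the $d=1$ bialternant bookkeeping: verifying that, after substituting $e_r\mapsto\binom{N}{r}+t$ and extracting $[t^1]$, the generalized Schur function reorganizes precisely into the telescoping sum defining $\ca{G}_{\tilde\lambda,\mu}$, including the correct boundary behaviour when the reduced partition $\tilde\lambda$ leaves the rectangle or acquires a negative part. A secondary difficulty is cleanly justifying the metric inversion in the first step, i.e.\ that the $q$-deformed pairing does not mix degrees in a way that obstructs reading off $(\cO_\lambda\bullet\cO_\mu)_d$; for $\grtwo$ this should follow from the same finiteness together with nondegeneracy of the classical Poincar\'e pairing.
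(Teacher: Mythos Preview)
Your outline is essentially correct and follows the same strategy as the paper. Both proofs reduce the quantum product to one-pointed Quot scheme invariants via the results of Part~I, expand $\rO_\lambda\cdot\rO_\mu$ by the rank-two Littlewood--Richardson rule for Grothendieck polynomials, and then evaluate the relevant $\langle\rO_\rho\rangle^{\quot}_{0,d}$ through the bialternant formula.

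The main organizational difference is how the ``metric inversion'' you flag is handled. The paper packages this step into a \emph{quantum reduction map} $\kappa\colon R(\mathrm{GL}_2)\otimes\Z[[q]]\to\qk(\grtwo)$, defined by $\kappa(\rV)=\sum_{\alpha\in\P_{2,k}}\langle\!\langle\rV\cdot\rO_\alpha^*\rangle\!\rangle^{\quot}_0\,\O_\alpha$, where the quantized duals $\rO_\alpha^*=\rO_{\alpha^*}\cdot\det(\rS)$ satisfy the orthogonality $\langle\!\langle\rO_\nu,\rO_\alpha^*\rangle\!\rangle^{\quot}_0=\delta_{\nu,\alpha}$ for $\nu,\alpha\in\P_{2,k}$. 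Since $\kappa$ is a ring homomorphism sending $\rO_\lambda\mapsto\O_\lambda$, one gets $\O_\lambda\bullet\O_\mu=\kappa(\rO_\lambda\cdot\rO_\mu)$ directly, with no separate inversion of the quantized pairing; your secondary difficulty disappears. The computation then reduces to determining $\kappa(\rO_\nu)$ for each $\nu\in\P_{2,2k}$ appearing in the Littlewood--Richardson expansion, which in turn requires the invariants $\langle\!\langle\rO_\nu\cdot\rO_{\alpha^*}\cdot\det(\rS)\rangle\!\rangle^{\quot}_0$ for $\nu\in\P_{2,2k}$ and $\alpha^*\in\P_{2,k}$, hence one-pointed invariants of $\rO_\rho$ for $\rho\in\P_{2,3k+1}$. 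The paper carries out this case analysis in a sequence of propositions, and the ``$d=1$ bialternant bookkeeping'' you correctly identify as the main obstacle is indeed the most delicate part; it is handled by reducing $(1-1/z_j)^N=-tz_j^{-2}$ inside the generalized Schur determinant and tracking which terms survive the $[t^1]$ extraction.
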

	\begin{rem}
		We refer to the above formula as the \emph{quantum $K$-Littlewood--Richardson rule} for $\mathrm{Gr}(2,N)$. In particular, Theorem~\ref{thm:intro_G(2,N)} immediately implies the positivity conjectured in \cite{Buch-Mihalcea} for $\mathrm{Gr}(2,N)$: Let $F_{\lambda,\mu}^{\nu,d}$ be the structure constants for the quantum $K$-product, defined by
			\[
			\mathcal{O}_{\lambda}\bullet \mathcal{O}_{\mu}
			=
			\sum_{\nu\in \mathcal{P}_{2,N-2}}\sum_{d=0}^{2}
			F_{\lambda,\mu}^{\nu,d}\ q^d\mathcal{O}_{\nu}.
			\]
			Then
			$
			(-1)^{|\nu|-|\lambda|-|\mu|+dN}F_{\lambda,\mu}^{\nu,d}\geq 0
			$
			for all $\lambda,\mu,\nu\in \mathcal{P}_{2,N-2}$ and all $0\leq d\leq 2$.
	\end{rem}

	As noted in~\cite[Remark 5.2]{Buch-Mihalcea},
	the quantum cohomology ring $\mathrm{QH}(X)$ of
	the Grassmannian is an associated graded ring of $\qk(X)$.
	Consequently, the quantum Littlewood--Richardson rule in $K$-theory,
	as stated in the above theorem,
	yields a formula for the product of two Schubert cycles in quantum cohomology.
	We refer the reader to Corollary~\ref{cor:cohomological-quantum-LR-rule} for the precise formula.
	
	Our approach enables numerical computation of
	the quantum $K$-products for any Grassmannian. In the appendix, we present the complete multiplication tables for $\qk(\mathrm{Gr}(3,6))$.

	\subsection{Future directions}
	Motivated by the quantum $K$-invariants of the Grassmannian,
	we examine the Euler characteristics of certain $K$-theory
	classes on the Quot scheme,
	using localization formulas to derive Vafa--Intriligator and
	bialternant-type formulas for these invariants. 
	The Quot scheme serves as the moduli space of stable quasimaps from
	a fixed curve to the quotient stack $\fX$ containing the
	Grassmannian and extends to more general GIT quotients,
	such as isotropic Grassmannians and partial flag manifolds.
	We aim to generalize our Vafa--Intriligator formula (Theorem~\ref{thm:Vafa_Intr_Ktheory_intro}) and
	bialternant formula (Theorem~\ref{thm:intro-2}) to more general quasimap spaces,
	and to explore the vanishing of higher cohomology in the definition of our $K$-theoretic invariants (Theorem~\ref{prop:intro-vanishing}).
	Additionally, we hope to extend the strategy in Section~\ref{subsec:quantum-reduction-map} to compute
	the quantum $K$-rings of all Grassmannians and other GIT quotients.

	\subsection{Plan of the paper}
	In Section~\ref{sec:K-theory}, we review basic $K$-theory notation, symmetric functions, and the $K$-group of the Grassmannian. 
	In Section~\ref{sec:localization}, we define $K$-theoretic Quot scheme invariants and use localization and genus-induction formulas to prove the Vafa--Intriligator formula for these invariants.
	In Section~\ref{sec:schur-bundle}, we focus on the genus zero case,
	deriving a bialternant formula for 1-pointed $K$-theoretic Quot scheme invariants of Schur bundles and proving vanishing results relevant to applications to quantum $K$-theory.
	In Section~\ref{sec:applications}, we propose a strategy to compute the quantum $K$-theory of the Grassmannian using these invariants, implementing it in rank 2 to derive a quantum $K$-Littlewood--Richardson rule for $\mathrm{Gr}(2,N)$.
	In Appendix~\ref{subsec:examples}, we provide the complete multiplication tables for the
	quantum $K$-rings of $\mathrm{Gr}(3,6)$.

	\subsection{Acknowledgment}
	
	Both authors are grateful to Leonardo C. Mihalcea for sharing insights on quantum $K$-theory of the Grassmannian. This paper, together with the adjoining~\cite{SinhaZhang},
	was inspired by our efforts to verify the $K$-theoretic divisor equation~\cite[Conjecture 4.3]{GMSXZZ} via Quot schemes.
	The first author thanks Alina Marian and Dragos Oprea for valuable conversations about the Quot schemes of curves.
	The second author would like to thank Wei Gu and Du Pei for explaining the physics connected to this work.


	\section{Preliminaries}
	\label
	{sec:K-theory}

	\subsection{Basic notation in $K$-theory}
	
	Let $X$ be a scheme with an action of an algebraic torus $T$. 
	The group $\kG(X)$ (resp. $\kg(X)$) denotes the Grothendieck group of $T$-equivariant coherent sheaves (resp. $T$-equivariant locally free sheaves) on $X$. For a coherent sheaf or vector bundle $E$, we denote its associated $K$-theory class simply by $E$. The non-equivariant $K$-groups of coherent sheaves and locally free sheaves are denoted by $K_0(X)$ and $K^0(X)$, respectively.
	
	If $X$ is nonsingular, the map $K^0(X) \rightarrow K_0(X)$, which sends a vector bundle to its sheaf of sections, is an isomorphism. In this case, we denote the non-equivariant $K$-group of $X$ by $K(X)$.
	
	Let $\mathrm{pt}=\spec\bb{C}$.
	For $T= (\C^{*})^{N}$, we have $$\Gamma:=\kg(\mathrm{pt}) =K^0(BT)\cong \bb{Z}[\alpha_{1}^{\pm},\dots, \alpha_{N}^{\pm}],$$
	where $\alpha_i$ is the $T$-character corresponding to the projection onto the $i$-th factor.

	For a class $F\in \kG(X)$ on a proper scheme $X$,
	the $T$-equivariant Euler characteristic is defined as
	\[
	\chi^{T}(X,F): = \sum_{i \geq0} \text{char}_{T} \, H^i(X, F) \in \Gamma.
	\]
	where $\mathrm{char}_{T}$ denotes the character of a $T$-module.

	\subsection{Symmetric functions and representations of $\GLr$}
	\label
	{sec:symmetric-functions}
	The $K$-theory of the Grassmannian is closely connected to $\GLr$ representations and symmetric functions.
	Let $r$ be a positive integer.
	We denote the representation ring of $\GLr$ by $R(\GLr)$.
	A \emph{partition} (with $r$ parts) is a weakly decreasing sequence of nonnegative integers $\lambda = (\lambda_1, \dots, \lambda_r)$, where $\lambda_1 \geq \lambda_2 \geq \dots \geq \lambda_r \geq 0$. Each partition corresponds to a \emph{Young diagram} with $\lambda_i$ boxes in row $i$. We often omit trailing zeros, writing $\lambda = (\lambda_1, \dots, \lambda_m)$ if $\lambda_i = 0$ for $i > m$. When $\lambda_1 = \lambda_2 = \dots = \lambda_r = a$, we denote this as $(a)^r := (a, \dots, a)$.
	
	Let $s_{\lambda}$ be the \emph{Schur polynomial} for a partition $\lambda$, defined as:
	\begin{equation*}
		s_{\lambda}(x_{1},\dots, x_{r})=
		\frac
		{\det
			(x_{j}^{\lambda_{i}+r-i}
			)_{1\leq i,j\leq r}
		}
		{
			\det
			( x_{j}^{r-i}
			)_{1\leq i,j\leq r}
		}
		,
	\end{equation*}
	where $\det
	( x_{j}^{r-i}
	)
	=\prod_{1\leq i<j\leq r}(x_{i}-x_{j})
	$ is the~\emph{Vandermonde determinant}.
	This is known as Jacobi's \emph{bialternant formula}. Note that $s_\lambda$ is a symmetric polynomial in the $x_i$'s.

	Let $\bS^{\lambda}$ be the \emph{Schur functor} associated with the partition $\lambda$. For any $\GLr$-module $\rV\in R(\GLr)$ and $g \in \GLr$, the trace of $g$ on $\bS^{\lambda}(\rV)$ is given by
	\[
	\chi_{\bS^{\lambda}(\rV)}(g)
	=
	s_{\lambda}(x_{1},\dots,x_{r}),
	\]
	where $x_{1},\dots,x_{r}$ are the eigenvalues of $g$ on $\rV$ (see~\cite[\textsection{6}]{Fulton-Harris}).
	Let $\C^r$ denote the standard representation of $\GLr$.
	Then, $\bS^{\lambda}(\C^{r})$ is the unique irreducible polynomial representation of $\GLr$ with highest weight $\lambda$. For example, if $\lambda = (a)$, then $\bS^{(a)}(\C^{r}) = \Sym^{a}(\C^{r})$; if $\lambda = (1)^{b}$ with 1 repeated $b$ times, then $\bS^{(1)^{b}}(\C^{r}) = \wedge^{b}(\C^{r})$.

	In this paper, we also consider the \emph{stable Grothendieck polynomials}, denoted by $G_{\lambda}$ for the partition $\lambda$ (see~\cite{Buch}), which have the following Weyl-type bialternant formula:
	\begin{equation}
		\label
		{eq:G-lambda-bialternant-formula}
		G_{\lambda}(x_{1},\dots, x_{r})=
		\frac
		{\det
			(x_{j}^{\lambda_{i}+r-i}(1-x_{j})^{i-1}
			)_{1\leq i,j\leq r}
		}
		{
			\det
			( x_{j}^{r-i}
			)_{1\leq i,j\leq r}
		}
		.
	\end{equation}

	By~\cite[Theorem 2.2]{Lenart}, we have the expansion
	\begin{equation}
		\label
		{eq:schur-expansion}
		G_{\lambda}
		(x_{1},\dots,x_{r})
		=
		\sum_{\lambda\subseteq \mu \subseteq \hat{\lambda}}
		(-1)^{|\mu|-|\lambda|}
		g_{\lambda\mu}
		s_{\mu}
		(x_{1},\dots,x_{r})
		.
	\end{equation}
	Here $\hat{\lambda}$ represents the maximal partition with $r$ rows,
	obtained from $\lambda$ by adding up to $i-1$
	boxes in the $i$th row for $2\leq i\leq r$.
	The coefficients $g_{\lambda,\mu}$ are nonnegative and
	count the number of increasing tableaux
	of skew shape $\mu/\lambda$,
	with the constraint that row $i$ contains entries
	no greater than $i-1$.
	According to~\cite[Theorem 2.7]{Lenart},
	Schur functions can also be expanded into Grothendieck polynomials as:
	\begin{equation*}
		s_\lambda
		(x_1,\dots,x_r)
		=
		\sum_{\substack{\mu\supseteq \lambda\\ \mu_1=\lambda_1}}
		f_{\lambda\mu}
		G_\mu
		(x_1,\dots,x_r),
	\end{equation*}
	where $f_{\lambda\mu}$ is the number of semistandard Young tableaux
	of shape $\mu/\lambda$ with row $i$ entries limited to
	$1,2,\dots,i-1$.

	Let $\rV\in R(\GLr)$. According to~\cite[\textsection{8}]{Lenart},
	there is a well-defined virtual $\GLr$-representation $\bG^{\lambda}(\rV)$ whose character satisfies
	\begin{equation}
		\label{eq:character-G-functor}
		\chi_{\bG^{\lambda}(\rV)}(g)=
		G_{\lambda}(1-x^{-1}_{1},\dots,1- x^{-1}_{r})
		,
	\end{equation}
	where $x_{1},\dots,x_{r}$ are the eigenvalues of $g$ on $\rV$. The following lemma provides details on the transition matrices between the representations $\bG^\lambda(\rV)$ and $\bS^\nu(\rV^\vee)$.

	\begin{lem}
		\label
		{lem:leading-term-one}
		Let $\lambda$ be a partition with at most $r$ parts.
		There exist unique integers $D_\lambda^\nu$ such that
		\[
		G_{\lambda}
		(1-y_{1},\dots,1- y_{r})
		=
		\sum_{\nu\subseteq (\lambda_1)^r } D^{\nu}_\lambda
		s_{\nu}(y_{1},\dots,y_{r})
		\]
		with $D^{\emptyset}_\lambda=1$.
	\end{lem}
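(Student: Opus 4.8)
The plan is to work directly from the bialternant formula~\eqref{eq:G-lambda-bialternant-formula}. Substituting $x_j = 1-y_j$ and using $1-x_j = y_j$, the $(i,j)$ entry of the numerator matrix becomes $(1-y_j)^{\lambda_i+r-i}y_j^{i-1}$, while the denominator $\det(x_j^{r-i})$ becomes the Vandermonde determinant of $(1-y_1,\dots,1-y_r)$, which equals $\prod_{i<j}(y_j-y_i)$. Writing $p_i(y) := (1-y)^{\lambda_i+r-i}\,y^{i-1}$, a one-variable polynomial whose lowest-degree term is $y^{i-1}$ (with coefficient $1$) and whose degree is $\lambda_i+r-1$, we obtain
\[
G_\lambda(1-y_1,\dots,1-y_r) = \frac{\det\bigl(p_i(y_j)\bigr)_{1\le i,j\le r}}{\prod_{i<j}(y_j-y_i)}.
\]
Since $G_\lambda$ is a symmetric polynomial, the left-hand side is a symmetric polynomial in $y_1,\dots,y_r$ with integer coefficients; because the Schur polynomials $s_\nu(y_1,\dots,y_r)$ with $\nu$ having at most $r$ parts form a $\Z$-basis of the ring of symmetric polynomials in $r$ variables, existence and uniqueness of integers $D_\lambda^\nu$ is automatic. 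It then remains to determine the support of the expansion and the value $D_\lambda^\emptyset$.

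For these two points I would expand the determinant multilinearly over the monomials of the $p_i$. Writing $p_i(y) = \sum_m c_{i,m}\,y^m$ with integer coefficients $c_{i,m}$ supported in $i-1 \le m \le \lambda_i+r-1$, multilinearity gives
\[
\det\bigl(p_i(y_j)\bigr) = \sum_{(m_1,\dots,m_r)} \Bigl(\prod_{i} c_{i,m_i}\Bigr)\det\bigl(y_j^{m_i}\bigr)_{1\le i,j\le r}.
\]
Terms with a repeated exponent vanish, and a term with distinct exponents, after sorting them into the strictly decreasing sequence $\mu_i+r-i$ associated with a partition $\mu$, contributes $\pm\, s_\mu(y)\prod_{i<j}(y_i-y_j)$; dividing by $\prod_{i<j}(y_j-y_i)$ thus reproduces exactly the Schur expansion, with each $D_\lambda^\nu$ an integer combination of products of the $c_{i,m}$. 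Since every exponent satisfies $m_i \le \deg p_i = \lambda_i+r-1 \le \lambda_1+r-1$, the largest exponent occurring in any nonzero term is at most $\lambda_1+r-1$; as this largest exponent equals $\mu_1+r-1$ for the associated $\mu$, we get $\mu_1 \le \lambda_1$, whence $\nu \subseteq (\lambda_1)^r$, giving the claimed support. This bialternant route is what makes the degree bound transparent; starting instead from the Schur expansion~\eqref{eq:schur-expansion} would require a separate analysis of the support of $s_\mu(1-y)$.

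Finally, $D_\lambda^\emptyset$ is the coefficient of $s_\emptyset = 1$, corresponding to the exponent multiset $\{m_1,\dots,m_r\} = \{0,1,\dots,r-1\}$. The constraint $m_i \ge i-1$ forces $m_r = r-1$, then $m_{r-1}=r-2$, and inductively $m_i = i-1$ for every $i$, so this is the unique contributing term; its coefficient is $\prod_i c_{i,i-1} = \prod_i \binom{\lambda_i+r-i}{0} = 1$, and the $(-1)^{\binom{r}{2}}$ arising from reversing the rows of $\det(y_j^{i-1})$ cancels against the same sign relating $\prod_{i<j}(y_j-y_i)$ to $\prod_{i<j}(y_i-y_j)$. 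Hence $D_\lambda^\emptyset = 1$; equivalently, this records the identity $G_\lambda(1,\dots,1)=1$, obtained by setting $y=0$. The only genuinely delicate part of the argument is the sign and normalization bookkeeping in this generalized bialternant expansion—tracking the passage between $\det(y_j^{m_i})$, $s_\mu$, and the two Vandermonde conventions—but it is routine once the indexing is fixed.
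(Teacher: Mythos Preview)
Your proof is correct and follows essentially the same route as the paper's: both substitute $x_j=1-y_j$ into the bialternant formula~\eqref{eq:G-lambda-bialternant-formula}, rewrite the denominator as $\det(y_j^{i-1})$ (equivalently $\prod_{i<j}(y_j-y_i)$), and read off the constant term from the lowest-degree monomial in each row. Your treatment is in fact more complete on the support condition $\nu\subseteq(\lambda_1)^r$: the paper only gestures at this via the Schur expansion~\eqref{eq:schur-expansion}, whereas your multilinear expansion of $\det(p_i(y_j))$ with the degree bound $\deg p_i=\lambda_i+r-1\le\lambda_1+r-1$ makes it explicit.
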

	\begin{proof}
		The coefficients $D_\lambda^\nu$ can be determined by first expressing the character in~\eqref{eq:character-G-functor} as a linear combination of $s_\lambda(1 - x_1^{-1}, \dots, 1 - x_r^{-1})$ using \eqref{eq:schur-expansion},
		and then expanding it in terms of $s_{\lambda}(x_1^{-1}, \dots, x_r^{-1})$.
		We then prove that the constant term in this expansion is 1, i.e., $D^{\emptyset}_\lambda=1$.
		Specifically, observe that
		\begin{align*}
			G_{\lambda}{(1-y_1,\dots,1-y_r)}=&\frac{\det((1-y_j)^{\lambda_i+r-i} y_j^{i-1})_{1\le i,j\le r}}{\det((1-y_j)^{r-i})_{1\le i,j\le r}}\\&
			=\frac{\det((1-y_j)^{\lambda_i+r-i} y_j^{i-1})_{1\le i,j\le r}}{\det(y_j^{i-1})_{1\le i,j\le r}}.
		\end{align*}
		Expanding $(1-y_j)^{\lambda_i+r-i} $ using the binomial theorem shows that the lower-degree terms in the numerator match the denominator, making the constant term of $G_{\lambda}
		(1-y_{1},\dots,1- y_{r})$ equal to 1.
	\end{proof}

	\subsection{$K$-theory of the Grassmannian}
	\label
	{sec:K-theory-Grassmannian}

	We fix two positive integers $r$ and $N$ such that $N\geq r$.
	Throughout the paper, we set
	$$k:=N-r.
	$$

	Consider the Grassmannian
	$$X=\grass=\{V\subset \C^{N}\mid \dim_{\C} V=r\},$$
	which parametrizes $r$-planes in $\C^N$.
	The maximal torus $T \cong (\C^*)^N$ in $\mathrm{GL}_N(\C)$, consisting of diagonal matrices, acts on $X$. On $X$, there is a short exact sequence:
	\[
	0\rightarrow S \rightarrow \O^{\oplus N}_{X}\rightarrow Q\rightarrow 0,	
	\]
	where $S$ is the rank $r$ tautological subbundle with a natural $T$-linearization.
	
	The standard GIT presentation of $X$ is:
	\[
	X=[M_{r\times N}^{ss}/\text{GL}_r(\bb{C})],
	\]
	where $M_{r\times N}$ is the affine space of $r\times N$ complex matrices,
	and $M_{r\times N}^{ss}$ is its open subset of full-rank matrices.
	Define the quotient stack
	\begin{equation*}
		\fr{X}=[M_{r\times N}/\text{GL}_r(\bb{C})],
	\end{equation*}
	which contains $X$.

	Define $\prkk$ as the set of partitions
	contained within the rectangle $(k)^{r}=(k,k,\dots,k)$ with $r$ rows and $k$ columns:
	\[
	\prkk
	=\{
	\lambda=(\lambda_{1},\dots,\lambda_{r})
	\mid
	\lambda_{1}\leq k
	\}.
	\]
	The $K$-group $K(X)$ has a $\Z$-basis indexed by $\prkk$:
	\begin{equation*}
		\{
		\O_{\lambda}
		\}_{\lambda\in \prkk},
	\end{equation*}
	where $\O_\lambda$
	represents the $K$-theory class $[\O_{X_\lambda}]$ of the
	structure sheaf of the \emph{Schubert variety} $X_\lambda$ in $X$ (see~\cite[\textsection{8}]{Buch}).
	These classes $\O_\lambda$ are referred to as
	\emph{Schubert structure sheaves}.

	The $K$-theories of $X$ and $\fX$ are closely related to
	representations of $\GLr$.
	For a $\GLr$-representation $\mathrm{V}$, we define its associated $K$-theory class
	on $\fX$ as
	\begin{equation*}
		M_{r\times N} \times_{\GLr} \mathrm{V} \in \kg(\fr{X}),
	\end{equation*}
	where the $T$-linearization is given by
	$\lambda\cdot[m,v]=[m \lambda,v]$ with $\lambda\in T$, $v\in \mathrm{V}$,
	and $m\in M_{r\times N}$.
	We denote this $K$-theory class also by $\rV$ for simplicity.
	For example, let $\C^{r}$ denote the standard representation of $\GLr$, and let
	$$\mathrm{S}=(\C^{r})^{\vee}$$ be
	its dual.
	Then the restriction of the associated $K$-theory class $\mathrm{S}$ to $X$ is the tautological subbundle $S$.

	For later applications, we introduce (virtual) representations
	corresponding to Schubert structure sheaves.
	\begin{definition}
		For any partition $\lambda$ with $r$ (nonnegative) parts, we define 
		\[
		\rO_\lambda:=\bG^\lambda(\C^r) \in R(\GLr).
		\]
	\end{definition}
	\begin{rem}
		The representation $\rO_\lambda$ is nonzero,
			even for $\lambda\notin\prkk$.
			According to~\cite[Theorem 8.1]{Buch},
			the restriction of $\rO_\lambda$ to $X$ equals $\O_\lambda$
			for $\lambda\in\prkk$ and is zero otherwise. Consider the example $X=\mathrm{Gr}(1,N)\cong \mathbb{P}^{N-1}$, with universal subsheaf $S=\mathcal{O}_{\mathbb{P}^{N-1}}(-1)$. In our convention, for any non-negative integer $\lambda$, the representation $\mathrm{O}_{\lambda}=(1-\mathbb{C}^\vee )^{\lambda}\neq 0$, where $\mathbb{C}$ is the standard $\mathbb{C}^*$-representation. On the other hand, its restriction $(1-S)^{\lambda}=\mathcal{O}_H^{\otimes \lambda}$ is the $\lambda$-th power of the structure sheaf of a hyperplane $H\subset \mathbb{P}^{N-1}$, which vanishes when $\lambda\geq N$.
	\end{rem}

	\section{Vafa--Intriligator type formula}
	\label{sec:localization}

	In this section, we first apply the Atiyah–Bott localization to derive
	a Vafa--Intriligator type formula for the Euler characteristics of certain $K$-theory classes over Quot schemes on $\mathbb{P}^1$. While the torus localization used here is inspired by ~\cite{Marian-Oprea,OpreaShubham}, the combinatorics in our approach differ slightly.
	In the last subsection, we generalize the formula to all genera.

	\subsection{1-pointed insertions over Quot schemes}
	
	Let $\Quot$ be the Quot scheme parametrizing isomorphism classes of quotients of $\O_{U\times \bb{P}^{1}}^{\oplus N}$ that are flat over a scheme $U$.
	There is a universal short exact sequence of coherent sheaves over $\Quot\times \bP^{1}$:
	\begin{equation}
		\label
		{eq:universal-seq-quot-scheme}
		0\rightarrow
		\ca{S}
		\rightarrow
		\O_{\Quot\times \bb{P}^{1}}^{\oplus N}
		\rightarrow
		\ca{Q}
		\rightarrow 
		0.
	\end{equation}
	For any closed point $z \in \Quot$, the restriction of $\ca{S}$ to $\{z\}\times \bb{P}^{1}$ is a vector bundle of rank $r$ and degree $-d$.
	
	The Quot scheme $\quot_d(\bP^1,N,r)$ is a smooth irreducible scheme of dimension $dN+r(N-r)$; see~\cite{Stromme}. Moreover, the tangent bundle is $\Hom_\pi (\cS,\cQ)=\pi_{\star}(\mathcal{S}^\vee \otimes \mathcal{Q})$, where $\pi: \Quot \times \bP^{1} \rightarrow \Quot$ is the projection to the first factor.
	
	Consider the standard $T = (\C^*)^N$ action on $\mathcal{O}_{\bP^1}^{\oplus N}$, which induces a $T$-action on $\Quot$ and a natural linearization of the tautological subbundle $\cS$. For $1 \leq i \leq N$, let $\alpha_i$ denote the $T$-character corresponding to the projection onto the $i$-th factor.
	
	Recall the following quotient stack containing the Grassmannian $X=\grass$:
	$$\fr{X}:=[M_{r\times N}/\GLr].$$
	Fix a point $p\in \bb{P}^1$.
	We denote $\cS_p$ as the restriction of $\cS$ to $\Quot \times \{p\}$. 
	There is a $T$-equivariant `stacky' evaluation morphism
	\[
	\wev_p:\Quot \rightarrow 
	\fX
	\]
	induced
	by the restriction $\ca{S}_{p}\rightarrow
	\O_{p}^{\oplus N}$.
	Note that $\wev_p$ may not always factor through $X\subset \fr{X}$.
	Recall that for a $\GLr$-representation $\mathrm{V}$, the associated $K$-theory class on $\fr{X}$ is $$M_{r\times N} \times_{\GLr}\mathrm{V} \in \kg(\fr{X}),$$ giving the isomorphism $\kg(\fr{X}) \cong R(\GLr) \otimes_{\bb{Z}} \Gamma$. By abuse of notation, we denote this $K$-theory class by $\rV$.
	
	\begin{definition}
		Let $\ell\in \Z$. For any $\mathrm{V}\in \kg(\fX) $, the level-$\ell$ $T$-equivariant 1-pointed
		$K$-theoretic Quot scheme invariant is defined as
		\[
		\langle
		\mathrm{V}
		\rangle^{\quot,T,\ell}_{0,d}
		:=
		\chi^{T}
		\left(
		\Quot,
		\det(\pi_{\star}\cS^\vee)^{-\ell}
		\cdot
		\wev_p^{*}
		\left(
		\mathrm{V}
		\right)
		\right)
		.
		\]
		For $\ell = 0$, we denote the $T$-equivariant invariant as $\langle V \rangle^{\quot,T}_{0,d}$. The superscript $T$ is omitted for non-equivariant invariants. Note that we denote the derived pushforward by $\pi_{\star}$.
	\end{definition}
	
	\begin{rem}
		For an irreducible representation $V=\bS^{\lambda}(\mathbb{C}^r)$, the stacky pullback $\widetilde{\mathrm{ev}}_p^{*}\left(\mathrm{V}\right) \cong \bS^{\lambda}(\mathcal{S}_{p}^\vee)$ is the Schur functor applied to the vector bundle $\mathcal{S}_{p}^\vee$ (see \cite{weyman} for construction of Schur functors). Note that the $K$-theory class of $\mathcal{S}_{p}$ is independent of the point $p$; for instance, it sits in the short exact sequence
			\[
			0 \to \pi_{\star}\mathcal{S}(-1)\to \pi_{\star}\mathcal{S}\to \mathcal{S}_{p}\to 0.
			\]
			In particular, this implies that the $K$-theory class of $\widetilde{\mathrm{ev}}_p^{*}\left(\mathrm{V}\right)$ is independent of the point $p$.
	\end{rem}

	\begin{rem}
		\label{rem:quantum/gauge-corr}
		$K$-theoretic Quot scheme invariants have close connections to 3d gauge theories~\cite{Jockers-Mayr1, Jockers-Mayr2,  Ueda-Yoshida, GMSZ1,GMSZ2,GMSZ3,GPZ}.
		The level $\ell$ corresponds to a specific choice of Chern--Simons levels, and the generating function of $n$-pointed $K$-theoretic Quot scheme invariants
		correspond to that of Wilson loops in the 3d $\ca{N}=2$ Chern--Simons-matter theory on $S^{1}\times S^{2}$;
		see~\cite[\textsection{4}]{Ueda-Yoshida}. 
	\end{rem}

	The remainder of this section is dedicated to establishing the localization framework and proving the
	following Vafa--Intriligator type formula. For clarity, the setup and proof are presented in separate sections.
	
	\begin{theorem}\label{thm:Vafa_Int_formula_equivariant}
		For any $\GLr$-representation $\mathrm{V}$ and the level $\ell$ satisfying $-r<\ell\le (N-r)$,
		we have
		\[
		\langle
		\rV
		\rangle^{\quot,T,\ell}_{0,d}
		=
		[t^d]
		\sum_{z_1,\dots,z_r}
		v(z_1,\dots,z_r)\frac{\prod_{i=1}^{r}z_i^{N-r+d-\ell}\cdot\prod_{i\neq j}(z_i-z_j)}{\prod_{i=1}^{r}P'(z_i)},
		\]
		where 
		\begin{itemize}
			\item 
			the sum is over all $\binom{N}{r}$ sets of $r$ distinct roots $z_1,z_2,\dots,z_r$ of
			\begin{equation}
				\label
				{eq:equivariant-bethe-ansatz}
				P(z)
				=
				\prod_{i=1}^{N}(z-\alpha_i^{-1})
				+(-1)^rz^{N-r-\ell}t=0,
			\end{equation}
			and $\alpha_i$ is the character of $T=\left(\bb{C}^*\right)^N$ given by the $i$-th projection;
			\item $v(z_1,\dots,z_r)$ is the character of $\rV$; and 
			\item the operation $[t^{d}]$ on a symmetric Laurent polynomial in the $N$ roots of~\eqref{eq:equivariant-bethe-ansatz} involves three steps:
			express the Laurent polynomial in terms of $t$ using~\eqref{eq:equivariant-bethe-ansatz}, expand it as a power series around $t=0$,
			and extract the coefficient of $t^{d}$.
		\end{itemize} 
	\end{theorem}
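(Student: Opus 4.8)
The plan is to evaluate $\langle\rV\rangle^{\quot,T,\ell}_{0,d}=\chi^{T}\!\big(\Quot,\det(\pi_*\cS^\vee)^{-\ell}\cdot\wev_p^*(\rV)\big)$ by $K$-theoretic Atiyah--Bott (Lefschetz) localization. Because $T=(\C^*)^N$ acts only through the diagonal action on $\O^{\oplus N}$ and fixes every point of $\bP^1$, its fixed loci on $\Quot$ are positive dimensional (products of symmetric powers of $\bP^1$), so I would first enlarge the torus to $\widetilde T=T\times\C^*$, letting the extra factor act on $\bP^1$ with the two fixed points $0,\infty$. A $\widetilde T$-fixed point of $\Quot$ then forces the universal subsheaf in~\eqref{eq:universal-seq-quot-scheme} to split as $\cS=\bigoplus_{i\in I}\O(-a_i[0]-b_i[\infty])$ for an $r$-element subset $I\subseteq\{1,\dots,N\}$ and integers $a_i,b_i\ge 0$ with $\sum_{i\in I}(a_i+b_i)=d$. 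These combinatorial data $(I,(a_i),(b_i))$ index the now isolated fixed points, and the Lefschetz formula expresses the invariant as a sum over them.

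Next I would compute the contribution of each fixed point. Restricting the integrand gives the $\widetilde T$-character of $\det(\pi_*\cS^\vee)^{-\ell}\cdot\wev_p^*(\rV)$, which I would read off from the splitting of $\cS$ using $\pi_*=H^0-H^1$ on $\bP^1$: the class $\cS_p^\vee$ restricts to the $r$ weights attached to $I$ (twisted by the local $\C^*$-weights determined by $a_i,b_i$ and $p$), the determinant contributes the $(-\ell)$-th power of $\det H^\bullet(\bP^1,\cS^\vee)$, and $\wev_p^*(\rV)$ becomes the character $v$ evaluated at those weights. For the denominator I would use that the tangent space is $H^0(\bP^1,\cS^\vee\otimes\cQ)=\pi_*(\cS^\vee\otimes\cQ)$, so that $\wedge^\bullet(T^\vee_{\mathrm{pt}}\Quot)=\prod_w(1-e^{-w})$ over its weights $w$. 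Introducing variables $z_i$ for the $\widetilde T$-weights of $\cS_p^\vee$ turns each fixed-point contribution into an explicit rational expression in the $z_i$, the $\alpha_j^{-1}$, and the auxiliary $\C^*$-parameter.

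The \emph{crucial step} is the resummation. Organizing the sum so that $t$ tracks the total degree (computing $\sum_d t^d\langle\rV\rangle^{\quot,T,\ell}_{0,d}$ and applying $[t^d]$ at the end), the independent sums over the $a_i$ and $b_i$ at the two fixed points become geometric-type series that resum to rational functions whose denominators vanish exactly on $P(z)=\prod_j(z-\alpha_j^{-1})+(-1)^r z^{N-r-\ell}t=0$. I would then recognize the outcome as the residue sum $\sum_{\mathrm{roots}} v(z)\,\frac{\prod_i z_i^{N-r+d-\ell}\prod_{i\ne j}(z_i-z_j)}{\prod_i P'(z_i)}$ over the $\binom{N}{r}$ choices of $r$ distinct roots of $P$; here $\prod_{i\ne j}(z_i-z_j)$ is the off-diagonal (Weyl/Vandermonde-squared) factor natural to $\GLr$-localization and $1/\prod_i P'(z_i)$ is the Jacobian converting a contour integral $\oint\prod_i\frac{dz_i}{2\pi i}\frac{(\cdots)}{P(z_i)}$ into a finite residue sum. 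The hypothesis $-r<\ell\le N-r$ guarantees $0\le N-r-\ell<N$, so $P$ remains monic of degree $N$ with exactly $N$ roots deforming analytically from $\{\alpha_j^{-1}\}$ at $t=0$, which is what makes the power series in $t$ and the residue count well defined.

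The main obstacle is precisely this last reorganization: showing that the localization sum over the fixed-point data $(I,(a_i),(b_i))$ collapses into the clean symmetric root-sum, and in particular that all dependence on the auxiliary $\C^*$-parameter cancels (as it must, since the left-hand side is defined without it). This requires carefully matching the geometric resummation of the degree distributions against the residue expansion of $1/P$, and tracking signs together with the $z^{N-r+d-\ell}$ exponent through the determinant/level twist; this is where our combinatorics depart from~\cite{Marian-Oprea,OpreaShubham}. The remaining bookkeeping --- the weights of $\pi_*\cS^\vee$, the $(1-e^{-w})$ factors, and the character $v$ --- is routine once the residue dictionary is in place.
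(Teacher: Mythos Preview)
Your outline is plausible, but it diverges from the paper's argument in several structural ways, and the step you flag as ``the main obstacle'' is exactly where the paper proceeds by a different and sharper route.

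First, the paper does \emph{not} enlarge the torus. It localizes only with respect to $T=(\C^*)^N$, accepts the positive-dimensional fixed loci $\fix_{\vec d,I}=\bP^{d_1}\times\cdots\times\bP^{d_r}$ indexed by an $r$-subset $I\subseteq[N]$ and a degree splitting $\vec d$, and applies Hirzebruch--Riemann--Roch followed by cohomological Atiyah--Bott localization (not $K$-theoretic Lefschetz at isolated points). So there is no auxiliary $\C^*$-parameter to introduce and later cancel. The restriction of $\cS$, of the tangent bundle $\Hom_\pi(\cS,\cQ)$, and of the level line bundle to $\fix_{\vec d,I}$ are computed explicitly in the hyperplane classes $h_i$ and the variables $z_i=e^{h_i-\epsilon_i}$, and the integral over each $\fix_{\vec d,I}$ becomes an extraction of the coefficient $[h^{\vec d}]$.

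Second, and this is the substantive difference, the resummation over $\vec d$ is \emph{not} a geometric-series argument. The paper applies the multivariate Lagrange--B\"urmann formula with $\Psi_i(h_i)=h_i z_i^{N-r-\ell}/R(z_i)$, which converts $[h^{\vec d}]$ into $[t^{\vec d}]$ under the change of variables $t_i=R(z_i)/z_i^{N-r-\ell}$ and produces the Jacobian factors $dh_i/dt_i=z_i^{N-r-\ell-1}/P'(z_i)$. One then sets $t_1=\cdots=t_N=t$, so that the $z_i$ become precisely the $N$ roots of $P(z)=R(z)-z^{N-r-\ell}t$, and $\sum_{\vec d}[t^{\vec d}]=[t^d]$. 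The hypothesis $-r<\ell\le N-r$ enters here to ensure $P$ is monic of degree $N$. A final substitution $t\mapsto(-1)^{r-1}t$ absorbs the sign $(-1)^{(r-1)d}$.

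In short: your isolated-fixed-point/geometric-series plan might be made to work, but the paper sidesteps your acknowledged obstacle entirely. Rather than cancelling an extra parameter after the fact, it integrates over the nontrivial fixed loci directly and uses Lagrange--B\"urmann as the single combinatorial identity that packages the degree sum into the root-sum formula. If you want to align with the paper, replace the torus enlargement by HRR on $\fix_{\vec d,I}$, and replace the geometric-series resummation by Lagrange--B\"urmann.
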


	\subsection{Fixed loci}
	
	Consider the $T$-action on $\quot_{d}:=\quot_d(\bP^1,N,r)$
	induced by the standard $T=(\C^*)^{N}$-action on $\mathcal{O}_{\bP^1}^{\oplus N}$.
	Set $[N]=\{1,2,\dots,N\}$.
	The fixed loci of this action is parameterized by pairs $(\vec{d}, I)$, where $\vec{d}=(d_1,\dots, d_r)$ with $|\vec{d}|=d_1+\cdots+d_r=d$, and $I\subseteq [N]$ is a subset of size $r$. Moreover, the fixed loci are isomorphic to products of projective spaces:
	\[\fix_{\vec{d},I}=\mathbb{P}^{d_1}\times \cdots \times \mathbb{P}^{d_r}. \]
	The factor $\mathbb{P}^{d_i}$ corresponds to the Hilbert scheme of $d_i$ points parameterizing short exact sequences \[0\to K_i\to \mathcal{O}_{\mathbb{P}^1}\to T_i\to 0 \] such that $T_i$ is a torsion sheaf of length $d_i$. 
	The corresponding point in the fixed locus $\fix_{\vec{d},I}$ is represented by \[0\to S\to \bigoplus_{i\in I}\mathcal{O}_{\mathbb{P}^1}\to \bigoplus_{i\in [N]}\mathcal{O}_{\mathbb{P}^1} \to Q\to 0, \]
	where
	\[
	S=K_1\oplus\cdots \oplus K_r\hspace{1em} \text{and}\hspace{1em} 
	Q\cong T_1\oplus \cdots \oplus T_{r}\oplus\cO_{\mathbb{P}^1}^{\oplus N-r}.
	\]
	Let $\mathcal{K}_i$ and $\mathcal{T}_i$ denote the tautological subbundle and the quotient bundle on $\mathbb{P}^{d_i}\times \mathbb{P}^1$. We shall use the same notation for their pullback to $\fix_{\vec{d},I}\times\mathbb{P}^1$. 	Note that
	\[
	\mathcal{K}_i =
	\cO_{\mathbb{P}^{d_i}}(-1)
	\boxtimes 
	\cO_{\mathbb{P}^1}(-d_i).
	\]

	\subsection{Todd calculations and equivariant normal bundle}
	Recall that for $1\leq i\leq N$, we denote by $\alpha_{i}$ the character of $T$ given by the projection to the $i$-th factor.
	For any character $\alpha$ of $T$, we denote by $\C_{\alpha}$ the corresponding 1-dimensional representation of $T$. Given any $K$-theory class $E$, we set $E_{(\alpha)}:=E\otimes \C_{\alpha}$.

	Let $\pi:\quot_{d}\times\bP^{1}\rightarrow \quot_{d}$ be the projection to the first factor.
	Let $\cS$ and $\ca{Q}$ be the tautological subbundle and quotient sheaf over $\pi$, respectively (see~\eqref{eq:universal-seq-quot-scheme}).
	It is a standard result that the tangent bundle $T_{\quot_d}=\Hom_\pi (\cS,\cQ)$ restricts to
	\[
	\bigoplus_{i,j\in I}\pi_{\star}(\mathcal{K}^{\vee}_i \otimes\mathcal{T}_j)_{(\alpha_j\alpha_i^{-1})}
	\oplus
	\bigoplus_{i\in I,j\in [N]\backslash I }\pi_{\star}(\mathcal{K}_i^\vee
	)_{(\alpha_j\alpha_i^{-1})}
	\]
	over the fixed loci $\fix_{\vec{d},I}$.
	In $K$-theory, it equals
	\[
	\bigoplus_{i\in I,j\in [N]}\pi_{\star}(\mathcal{K}^{\vee}_i)_{(\alpha_j\alpha_i^{-1})}
	\ominus
	\bigoplus_{i,j\in I}\pi_{\star}(
	\mathcal{K}^{\vee}_i \otimes\mathcal{K}_j)_{(\alpha_j\alpha_i^{-1})}
	.
	\]
	Therefore the equivariant Todd class of $T_{\quot_d}$ restricted to the fixed loci is
	\[
	\prod_{i\in I,j\in [N]}^{}\td(
	\pi_{\star}(\mathcal{K}^{\vee}_i)_{(\alpha_j\alpha_i^{-1})}
	)
	\bigg(\prod_{i,j\in I}\td(\pi_{\star}(\mathcal{K}^{\vee}_i \otimes\mathcal{K}_j)_{(\alpha_j\alpha_i^{-1})})
	\bigg)^{-1}. \]

	We know that the normal bundle is given by the moving part of the restriction of the tangent bundle to the fixed loci. We observe that the moving part is
	\[
	\nbun_{\vec{d},I}
	=
	T^{\rm mov}_{\quot_{d}}
	\bigg|_{\fix_{\vec{d},I}}
	= \bigoplus_{i\in I,j\in [N];i\ne j}
	\pi_{\star}(\mathcal{K}_{i}^{\vee})
	_{(\alpha_j\alpha_i^{-1})}
	\ominus
	\bigoplus_{i,j\in I; i\ne j}
	\pi_{\star}
	(\mathcal{K}^{\vee}_i \otimes\mathcal{K}_j)_{(\alpha_j\alpha_i^{-1})}
	.
	\]
	Therefore, we have
	\[
	\frac{1}{e_T(\cN_{\vec{d},I})}
	=
	\prod_{i\in I,j\in [N];i\ne j } \bigg(e_{T}(
	\pi_{\star}(\mathcal{K}_{i}^{\vee})
	_{(\alpha_j\alpha_i^{-1})}
	)
	\bigg)^{-1}
	\prod_{i,j\in I; i\ne j}
	e_{T}(
	\pi_{\star}
	(\mathcal{K}^{\vee}_i \otimes\mathcal{K}_j)_{(\alpha_j\alpha_i^{-1})}
	).
	\]

	\subsection{Explicit contributions}
	
	Let $h_i\in H^{2}(\fix_{\vec{d},I})$ denote the pullback of the hyperplane class in $\mathbb{P}^{d_i}$. In equivariant cohomology, we have
	\begin{align*}
		c^T(
		\pi_{\star}(\mathcal{K}^\vee_i)_{(\alpha_j\alpha_i^{-1})}
		)
		&
		= (1+(h_i-\epsilon_{i}+\epsilon_{j}))^{d_i+1},
		\\
		c^T(
		\pi_{\star}(\mathcal{K}^{\vee}_i \otimes\mathcal{K}_j)_{(\alpha_j\alpha_i^{-1})}
		)
		&
		=(1+(h_i-\epsilon_{i}-h_j+\epsilon_{j}))^{d_i-d_j+1},
	\end{align*}
	where $\epsilon_{i}$'s are the (cohomological) equivariant parameters such that $\alpha_{i}=e^{\epsilon_{i}}$.
	This implies the following expressions for the equivariant Todd classes:
	\begin{align*}
		\td(\pi_{\star}
		(
		\mathcal{K}^\vee_i)_{(\alpha_j\alpha_i^{-1})}
		)
		&= \bigg(\frac{h_i-\epsilon_{i}+\epsilon_{j}}{1-e^{-(h_i-\epsilon_{i}+\epsilon_{j})}}\bigg)^{d_i+1},
		\\
		\td(
		\pi_{\star}(\mathcal{K}^{\vee}_i \otimes\mathcal{K}_j)_{(\alpha_j\alpha_i^{-1})}
		)&= \bigg(\frac{h_i-\epsilon_i-h_j+\epsilon_j}{1-e^{-(h_i-\epsilon_i-h_j+\epsilon_j)}}\bigg)^{d_i-d_j+1}.
	\end{align*}
	The latter equals $1$ when $i=j$.
	Similarly, we obtain the Euler classes :
	\begin{align*}
		e_{T}
		(
		\pi_{\star}(\mathcal{K}^\vee_i)_{(\alpha_j\alpha_i^{-1})}
		)		
		&= (h_i-\epsilon_i+\epsilon_j)^{d_i+1},\\
		e_{T}(
		\pi_{\star}(\mathcal{K}^{\vee}_i \otimes\mathcal{K}_j)_{(\alpha_j\alpha_i^{-1})}
		)&= (h_i-\epsilon_i-h_j+\epsilon_j)^{d_i-d_j+1}.
	\end{align*} 
	\textbf{Simplification.} Observe that over the fixed locus $\fix_{\vec{d},I}$, the factor $\frac{\td (\quot_d)}{e_{T}(		\nbun_{\vec{d},I}
		)}$ restricts to
	\[
	\prod_{i\in I}
	e_{T}
	(
	\pi_{\star}(\mathcal{K}^\vee_i)_{(1)}
	)	
	\prod_{i\in I,j\in [N]}^{}
	\frac{\td}{e_{T}}\bigg(
	\pi_{\star}(\mathcal{K}^\vee_i)_{(\alpha_j\alpha_i^{-1})}
	\bigg)
	\prod_{i,j\in I; i\ne j}\frac{e_{T}}{\td}\bigg(
	\pi_{\star}(\mathcal{K}^{\vee}_i \otimes\mathcal{K}_j)_{(\alpha_j\alpha_i^{-1})}
	\bigg).
	\]
	
	For notational convenience, we set \[z_i=e^{h_i-\epsilon_{i}}\quad \text{and}\quad 
	R(z)=\prod_{i=1}^{N}(z-\alpha_i^{-1})
	,\]
	where $ \alpha_i=e^{\epsilon_{i}}$.
	Thus 
	\begin{align*}
		\frac{\td (\quot_d)}
		{e_{T}
			(
			\nbun_{\vec{d},I}
			)
		}\bigg|_{\fix_{\vec{d},I}}=&
		\prod_{i\in I}
		\bigg(
		h_i^{d_i+1}
		\bigg(
		\frac{z_i^N}{R(z_i)} \bigg)^{d_i+1} 
		\bigg)
		\cdot \prod_{i,j\in I; i\ne j}\bigg(\frac{z_i-z_j}{z_i}\bigg)^{d_i-d_j+1}	\\
		=&(-1)^{(r-1)d}\prod_{i\in I}\bigg(
		\bigg(
		\frac{h_iz_i^{N-r}}{R(z_i)} \bigg)^{d_i+1} z_i^{ d+1}
		\bigg)
		\cdot
		\prod_{i,j\in I; i\ne j}(z_i-z_j).
	\end{align*}

	\begin{proof}
		Applying Hirzebruch--Riemann--Roch followed by Atiyah--Bott localization, we write the Euler characteristic of $\det(\pi_{\star}\cS^\vee)^{-\ell}\otimes \wev_p^*(\mathrm{V})$ as
		\[
		\langle
		\mathrm{V}
		\rangle^{\quot,T,\ell}_{0,d}=
		\sum_{\vec{d}, I}\int_{\fix_{\vec{d},I}}
		\ch\Big(
		\det(\pi_{\star}\cS^\vee)^{-\ell}\big|_{\fix_{\vec{d},I}}
		\Big)
		\ch
		\Big(
		\wev_p^*(\mathrm{V})
		\big|_{\fix_{\vec{d},I}}
		\Big)
		\frac{\td(\quot_d)}{e_{T}
			(
			\nbun_{\vec{d},I}
			)
		}\bigg|_{\fix_{\vec{d},I}},\]
		where $I=\{i_1,\dots,i_r\}$ runs over subsets of $[N]=\{1,2,\dots , N\}$ of $r$ elements and $\vec{d}$ runs over the tuples of non-negative integers $(d_1,\dots ,d_r)$ that sum to $d$.
		
		Recall that $\mathcal{K}_i=
		\cO_{\mathbb{P}^{d_i}}(-1)\boxtimes \cO_{\mathbb{P}^1}(-d_i) $. Hence, over the fixed loci $\fix_{\vec{d},I}$,
		we have
		\[
		\pi_{\star}\cS^{\vee}
		\big
		|_{\fix_{\vec{d},I}}
		=
		\bigoplus_{i=1}^{r}
		\ca{O}_{\bP^{d_{i}}}(1)^{\oplus (d_{i}+1)}
		\quad
		\text{and}
		\quad
		\cS_p^\vee
		\big
		|_{\fix_{\vec{d},I}}
		= 
		\bigoplus_{i=1}^{r}
		\cO_{\mathbb{P}^{d_i}}(1).
		\]
		Since $c_{1}^{T}(	 \cO_{\mathbb{P}^{d_i}}(1))=h_i-\epsilon_{i}$, we have
		\begin{align*}
			\ch\Big(
			\det(\pi_{\star}\cS^\vee)^{-\ell}\big|_{\fix_{\vec{d},I}}
			\Big)
			&=		\prod_{i\in I}z_i^{-\ell(d_i+1)}
			,\quad\text{and}
			\\
			\ch
			\Big(
			\wev_p(\mathrm{V})
			\big |_{\fix_{\vec{d},I}}
			\Big)
			&=
			v(
			z_{I}
			),
		\end{align*}
		where $z_I=(z_{i_1},\dots,z_{i_r})$ with $z_i=e^{h_i-\epsilon_{i}}$, and $v(z_1,\dots,z_r)$ is the character Laurent polynomial for the $\GLr$-representation $\mathrm{V}$.

		We use the explicit calculations from the previous subsection to obtain
		\[
		(-1)^{(r-1)d}
		\sum_{\vec{d}, I}[h^{\vec{d}}]
		v(z_I)
		\prod_{i\in I}z_i^{-\ell(d_i+1)}
		\prod_{i\in I}
		\bigg(
		\bigg(
		\frac{h_iz_i^{N-r}}{R(z_i)}\bigg)^{d_i+1}z_i^{d+1}
		\bigg)
		\prod_{i,j\in I;i\ne j}(z_i-z_j).
		\]
		Here
		$[h^{\vec{d}}]$ denotes taking the coefficient of $\prod_{i\in I}^{}h_i^{d_i}$,
		which corresponds to integrating over the product of projective spaces $\fix_{\vec{d},I}=\mathbb{P}^{d_1}\times \cdots \times \mathbb{P}^{d_r}$. 
		
		We invoke the multivariate Lagrange--B\"urmann formula to sum over $\vec{d}$ (see~\cite{Gessel}). We use the following formulation of the formula: For any formal power series $\Psi(h_1,\dots,h_r)$, and $\Psi_1(h_1),\dots ,\Psi_N(h_r)$ with $\Psi_i(0)\ne 0$, we have
		\begin{align*}
			[h^{\vec{d}}]\Psi(h_1,\dots,h_r)\prod_{i=1}^{r}\Psi_i(h_i)^{d_i+1}=[t^{\vec{d}}]\Psi(h_1,\dots,h_r)\prod_{i=1}^{r}\frac{dh_i}{d t_i},
		\end{align*}
		where we use the change of variable $t_i=\frac{h_i}{\Psi_i(h_i)},$
		and express $h_i$ in terms of $t_i$ on the right hand side.
		
		In our problem, we choose
		\[
		\Psi_{i}(h_{i})=
		\frac{h_iz_i^{N-r-\ell}}{R(z_i)}
		\]
		and use the change of variable
		\begin{equation}
			\label{eq:new_ti}
			t_i=
			\frac{R(z_i)}
			{z_i^{N-r-\ell}},
		\end{equation}
		where $t_i$ is considered as a power series in $h_i$.
		Furthermore,
		\begin{equation}\label{eq:dti/dh}
			\frac{dt_i}{dh_i}= \frac{R'(z_i)-(N-r-\ell)z_i^{-1}R(z_i)}{z_i^{N-r-\ell-1}}.
		\end{equation}
		The Lagrange--B\"urmann formula implies that the previous expression equals
		\begin{multline}
			\label{eq:intermediate-step}
			(-1)^{(r-1)d}
			\sum_{\vec{d},I}[t^{\vec{d}}]
			v(z_I)\prod_{i\in I}
			\bigg(
			\frac{d h_i}{d t_i}z_i^{d+1}
			\bigg)
			\prod_{i,j\in I;i\ne j}(z_i-z_j)
			\\
			=
			(-1)^{(r-1)d}
			\sum_{I}
			\sum_{\vec{d}}[t^{\vec{d}}]
			v(z_I)\prod_{i\in I}
			\bigg(
			\frac{d h_i}{d t_i}z_i^{d+1}
			\bigg)
			\prod_{i,j\in I;i\ne j}(z_i-z_j)
		\end{multline}
		Note that in the above formula, the expression after $[t^{\vd}]$ is independent of $d_{i}$'s
		(and only depends on $d$ and $I$).
		Given a function $f(t_{1},\dots, t_{N})$, we have
		\[
		\sum_{\vd}
		[t^{\vd}]
		f(t_{1},\dots,t_{N})
		=[t^{d}]f(t,\dots,t).
		\] 
		Hence, to evaluate~\eqref{eq:intermediate-step}, we let $t_1=\cdots =t_N=t$ and find the coefficient of $t^d$ in the result sum.
		After the specialization, we note that $z_1,\dots,z_N$ are distinct roots of
		the following polynomial in $z$ (see~\eqref{eq:new_ti}):
		\[
		P(z)=
		R(z)-z^{N-r-\ell}t
		.
		\]
		Note that $P(z)$ is a monic polynomial of degree $N$ because
		$-r< \ell\leq (N-r)$.
		Hence $P(z)=\prod_{i=1}^{N} (z-z_{i})$.

		Using~\eqref{eq:new_ti} and~\eqref{eq:dti/dh}, we observe that
		\[
		\frac{dh_i}{dt}= \frac{z_i^{N-r-\ell-1}}{P'(z_i)}. \]
		We thus rewrite~\eqref{eq:intermediate-step} as 
		
		\begin{equation*}
			(-1)^{(r-1)d}
			[t^d]
			\sum_{I}
			v(z_I)\prod_{i\in I}\frac{z_i^{d+N-r-\ell}}{
				P'(z_i)
			}\prod_{i,j\in I;i\ne j}(z_i-z_j)
			.
		\end{equation*}
		We finish the proof of Theorem~\ref{thm:Vafa_Int_formula_equivariant} by substituting $t\mapsto (-1)^{r-1}t$.
	\end{proof}

	\subsection{Vafa--Intriligator type formula in all genera}
	\label
	{subsec:VI-in-all-genera}
	In this subsection, we focus on the non-equivariant case with level $\ell = 0$.
	
	Fix a smooth projective curve $C$ of genus $g$. Let $\QuotC$ be the Quot scheme
	parametrizing
	short exact sequences $$0\rightarrow S\rightarrow \O_{C}^{\oplus N}
	\rightarrow Q\rightarrow 0$$
	of coherent sheaves on $C$, where $S$ has rank $r$ and degree $-d$.
	The Quot scheme $\quot_d(C,N,r)$ may not be smooth when $C\neq \mathbb{P}^1$.
	A two-term perfect obstruction theory for $\quot_d(C,N,r)$ was constructed
	in~\cite{Marian-Oprea}, yielding a virtual structure sheaf $\ovir_{\QuotC}$ via~\cite{Lee}.

	Fix $p\in C$.
	Let
	\[
	\wev_p:\QuotC \rightarrow 
	\fX
	\]
	be the `stacky' evaluation morphism at $p$.
	\begin{definition}
		For $\mathrm{V}_1,\dots,\mathrm{V}_n\in R(\GLr)$ and $p_1,\dots,p_n\in C$, we define the $K$-theoretic Quot scheme invariant by
		\[
		\langle
		\mathrm{V}_{1},
		\dots,
		\mathrm{V}_{n}
		\rangle^{\quot}_{g,d}
		:=\chi
		\bigg(
		\quot_d(C,N,r),
		\ovir_{\QuotC}\cdot
		\prod_{i=1}^{n}
		\wev_{p_i}^{*}
		\left(
		\mathrm{V}_{i}
		\right)
		\bigg).
		\]
		We define the corresponding generating series by
		\[\langle\!\langle
		\mathrm{V}_{1},
		\dots,
		\mathrm{V}_{n}
		\rangle\!\rangle^{\quot}_{g} =\sum_{d\ge0}^{}q^d\langle
		\mathrm{V}_{1},
		\dots,
		\mathrm{V}_{n}
		\rangle^{\quot}_{g,d}, \]
		where $q$ is the Novikov variable. See \cite[Remark~1.8]{SinhaZhang} for the independence of the above definition from the choice of points $p_i \in C$. For the rest of the article, we assume $p = p_1 = \cdots = p_n$, which implies \[ 	\langle
			\mathrm{V}_{1},\mathrm{V}_2,
			\dots,
			\mathrm{V}_{n}
			\rangle^{\quot}_{g,d} = 	\langle
			\mathrm{V}_{1}\cdot \mathrm{V}_2 
			\cdots
			\mathrm{V}_{n}
			\rangle^{\quot}_{g,d}.
			\]
	\end{definition}
	The above invariants are independent of the choices of $p$ and the complex structure of the genus $g$ curve $C$.
	
	To state the induction formula of the $K$-theoretic Quot scheme invariants
	on the genus, we first introduce the concept of the \emph{quantized duals} to $\rO_{\lambda}$'s
	and the \emph{Euler represenation} $\rH$.
	
	\begin{definition}
		For any $\lambda=(\lambda_1,\dots,\lambda_r)\in\P_{r,N-r}$,
		we define the \emph{quantized $K$-theoretic dual} of $\rO_\lambda$
		by
		\[
		\rO_{\lambda}^{*}
		:=
		\rO_{\lambda^*}
		\cdot
		\det(\rS)
		\in 
		R(\GLr),
		\]
		where $\lambda^*:=(N-r-\lambda_{r},\dots, N-r-\lambda_{1})$ is the complement partition
		and $\mathrm{S}=(\C^{r})^{\vee}\in R(\GLr)$ is the dual of the standard representation.
	\end{definition}
	According to~\cite[Proposition 3.2]{SinhaZhang}, we have the following orthogonality relation:
	\begin{equation}
		\label{eq:orthogonality}
		\langle\!\langle
		\rO_{\nu}
		,
		\rO_{\lambda}^*
		\rangle\!\rangle^{\quot}_0
		=\delta_{\nu,\lambda}
		\quad
		\text{for}
		\ 
		\nu,\lambda\in \P_{r,N-r}.
	\end{equation}

	\begin{definition}
		We define the Euler representation $\rH$ by
		\[
		\rH=\sum_{\alpha \in\P_{r,N-r}}
		\mathrm{O}_{\alpha}\cdot\mathrm{O}^*_{\alpha}
		\in R(\GLr).
		\]
	\end{definition}

	\begin{rem}
		The associated $K$-theory class of $\rH$ restricted to $X=\grass$ is the $K$-theoretic Euler
		class $\wedge_{-1}(TX^\vee)$ of the tangent bundle of $X$.
		When $r=1$, we have 
		\[
		\rH = N(1-\rS)^{N-1}\rS,
		\]
		where $\rS$ is the dual of the standard representation of $\mathrm{GL}_1(\C)\cong\C^*$.
	\end{rem}
	
	\begin{rem}
		The quantum $K$-ring $\qk(X)$ of $X$ equipped with the quantized pairing
		$(F_{\alpha,\beta})$ is a commutative Frobenius algebra (see Section~\ref{subsec:quantum-k-ring}).
		Let $(F^{\alpha,\beta})$ be the matrix inverse to the quantized pairing matrix $(F_{\alpha,\beta})$.
		Let $\kappa: 
		R(\GLr)
		\ra
		\qk(X)$ be the quantum reduction map defined in Definition~\ref{def:kappa-map}.
		Then 
		\[
		\kappa(\rH) = \sum_{\alpha,\beta \in\P_{r,N-r}}
		\O_{\alpha}\cdot F^{\alpha,\beta}\cdot \O_{\beta}
		\]
		is the \emph{handle element} of $\qk(X)$. Here, we use the defintion from Exercise 5 in~\cite[\textsection{2.3}]{Kock}. It is the central element corresponds to the handle operator.
	\end{rem}

	According to~\cite[\textsection{3.3}]{SinhaZhang}, we have an induction formula on the genus:
	\[
	\langle\!\langle
	\mathrm{V} \rangle\!\rangle^{\quot}_{g}
	=
	\sum_{\alpha \in\P_{r,k}} \langle\!\langle
	\mathrm{V},
	\mathrm{O}_{\alpha}, \mathrm{O}^*_{\alpha}
	\rangle\!\rangle^{\quot}_{g-1}
	=  \langle\!\langle
	\mathrm{V}\cdot
	\rH
	\rangle\!\rangle^{\quot}_{g-1},
	\]
	for any $V\in R(\GLr)$.
	By induction, we have 
	\[
	\langle\!\langle
	\mathrm{V} \rangle\!\rangle^{\quot}_{g}
	=
	\langle\!\langle
	\mathrm{V}\cdot
	\rH^g
	\rangle\!\rangle^{\quot}_{0}.
	\]
	In particular, it implies that
	\[
	\langle
	\mathrm{V} \rangle^{\quot}_{g,d}
	=
	\langle
	\mathrm{V}\cdot
	\rH^g
	\rangle^{\quot}_{0,d}
	\]
	for $d\geq 0$.
	Combining the above equation with Theorem~\ref{thm:Vafa_Int_formula_equivariant}, we obtain the following
	Vafa--Intriligator type formula in all genera:
	\begin{theorem}
		\label{thm:Vafa_Int_formula_equivariant_all_genera}
		For any $\GLr$-representation $\mathrm{V}$,
		we have
		\[
		\langle
		\rV
		\rangle^{\quot}_{g,d}
		=
		[t^d]
		\sum_{z_1,\dots,z_r}
		v(z_1,\dots,z_r)h(z_1,\dots,z_r)^g\frac{\prod_{i=1}^{r}z_i^{N-r+d-\ell}\cdot\prod_{i\neq j}(z_i-z_j)}{\prod_{i=1}^{r}P'(z_i)},
		\]
		where we use the same notation as in Theorem~\ref{thm:Vafa_Int_formula_equivariant}
		and $h(z_1, \dots, z_r)$ is a polynomial in $z_i^{-1}$'s, representing the character of $\rH$.
	\end{theorem}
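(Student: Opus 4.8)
The plan is to obtain this statement as a direct consequence of two results already in hand: the genus-induction formula derived immediately above, and the genus-zero equivariant Vafa--Intriligator formula of Theorem~\ref{thm:Vafa_Int_formula_equivariant}, specialized to the non-equivariant, level-zero setting.

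First I would invoke the induction formula
\[
\langle \mathrm{V} \rangle^{\quot}_{g,d} = \langle \mathrm{V}\cdot \rH^{g} \rangle^{\quot}_{0,d},
\]
which holds for all $d\ge 0$ and reduces every higher-genus invariant to a genus-zero invariant with the modified insertion $\mathrm{V}\cdot\rH^{g}$. Next I would apply Theorem~\ref{thm:Vafa_Int_formula_equivariant} to this insertion with $\ell=0$ and all equivariant parameters specialized to $\alpha_i=1$; then $R(z)=(z-1)^{N}$, and the Bethe equation~\eqref{eq:equivariant-bethe-ansatz} becomes~\eqref{eq:intro-non-equivariant-bethe-equation}. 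The decisive algebraic input is that the character map from $R(\GLr)$ to the ring of symmetric Laurent polynomials is a ring homomorphism, so the character of the tensor product $\mathrm{V}\cdot\rH^{g}$ is simply $v\cdot h^{g}$. Substituting $v\cdot h^{g}$ in place of the character in the genus-zero formula produces exactly the claimed expression.

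The one point genuinely requiring care is that $\rH=\sum_{\alpha}\rO_{\alpha}\cdot\rO^{*}_{\alpha}$ is only a \emph{virtual} $\GLr$-representation, since each $\rO_{\alpha}=\bG^{\alpha}(\C^{r})$ is virtual; consequently $\mathrm{V}\cdot\rH^{g}$ is virtual rather than an honest representation. I would therefore first extend Theorem~\ref{thm:Vafa_Int_formula_equivariant} from genuine to virtual representations, which is immediate from linearity: both the Euler characteristic $\langle\,\cdot\,\rangle^{\quot,T,\ell}_{0,d}$ and the character map are additive, so the asserted identity is stable under $\Z$-linear combinations. With this extension in place the combination is purely formal, and the substantive content remains in the two cited inputs, namely the genus-zero localization computation above and the TQFT genus-induction from Part~I. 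I expect no substantive obstacle beyond confirming that $h$ is a Laurent polynomial in the $z_i^{-1}$, so that the operation $[t^{d}]$ is well-defined on $v\cdot h^{g}$ times the rational factor, and that the sign substitution $t\mapsto(-1)^{r-1}t$ already performed in the proof of Theorem~\ref{thm:Vafa_Int_formula_equivariant} leaves $P$ equal to~\eqref{eq:intro-non-equivariant-bethe-equation} with no stray signs.
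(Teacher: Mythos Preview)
Your proposal is correct and follows exactly the paper's approach: the paper derives the theorem simply by combining the genus-induction identity $\langle \rV\rangle^{\quot}_{g,d}=\langle \rV\cdot\rH^{g}\rangle^{\quot}_{0,d}$ with Theorem~\ref{thm:Vafa_Int_formula_equivariant}. Your additional remarks on extending the genus-zero formula to virtual representations by linearity and on the multiplicativity of characters make explicit what the paper leaves implicit, but there is no substantive difference in strategy.
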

	
	\begin{example}
		When $r=1$, we have
		\[
		h(x^{-1}) = N(1-x)^{N-1}x.
		\]
		When $r=2$, let $s_\lambda=s_\lambda(x_1,x_2)$ denote the Schur function in two variables (see Section~\ref	{sec:symmetric-functions}).
		For $\mathrm{Gr}(2,4)$, we have
		\[
		h(x_1^{-1},x_2^{-1})=6\,s_{1, 1} - 8\,s_{2, 1} + s_{2, 2} +
		3\,s_{3, 1} + 10\,s_{3, 3} - 8\,s_{4, 3} +
		3\,s_{4, 4} + s_{5, 3}.
		\]
		For $\mathrm{Gr}(2,5)$, we have
		\begin{align*}
			h(x_1^{-1},x_2^{-1})=&
			10\,s_{1, 1} - 20\,s_{2, 1} + 5\,s_{2, 2} + 15\,s_{3, 1} - 2\,s_{3, 2} +
			50\,s_{3, 3} - 4\,s_{4, 1} - 80\,s_{4, 3}\\
			& + 55\,s_{4, 4} + 45\,s_{5, 3} -
			30\,s_{5, 4} + 6\,s_{5, 5} - 10\,s_{6, 3} + 3\,s_{6, 4} + s_{7, 3}.
		\end{align*}
	\end{example}
	
	\begin{rem}
		Building on the work of Gorbounov--Korff~\cite{Gorbounov-Korff-1,Gorbounov-Korff-2},
		Ueda--Yoshida~\cite{Ueda-Yoshida} derived formulas for genus $g$ $n$-pointed correlations in the
		2d TQFT associated with the 3d $\ca{N}=2$ Chern–Simons-matter theory on $S^1 \times C$.
		As noted in Remark~\ref{rem:quantum/gauge-corr}, these correlation functions are expected to agree
		with the generating series of $K$-theoretic Quot scheme invariants.
		In particular, equation (2.51) in~\cite{Ueda-Yoshida} suggests that the generating series
		of the invariant in Theorem~\ref{thm:Vafa_Int_formula_equivariant_all_genera} can be expressed as a sum
		indexed by distinct roots of the Bethe ansatz equation (see~\cite[(2.20)]{Ueda-Yoshida}).
		In contrast, our theorems involve a single equation~\eqref{eq:equivariant-bethe-ansatz}, while the Bethe ansatz equation is a system of $r$ algebraic equations. Exploring the connections between these two approaches would be interesting.
	\end{rem}

	\section{Euler characteristics of Schur bundles}
	\label{sec:schur-bundle}
	
	\subsection{Bialternant formulas}
	For a weakly decreasing sequence of integers $\lambda=(\lambda_{1},\lambda_{2},\dots,\lambda_r)$,
	we define the irreducible (rational) representation
	of $\GLr$ of highest weight
	$\lambda$ as:
	\[
	\bS^{\lambda}(\C^{r})
	:=
	\det(\C^{r})^{\lambda_{r}}\otimes
	\bS^{\bar{\lambda}}(\C^{r})
	,
	\]
	where $\bar{\lambda}=(\lambda_{1}-\lambda_{r},\lambda_{2}-\lambda_{r},\dots,\lambda_{r}-\lambda_{r})$
	is a partition.
	Similarly, for any $T$-equivariant vector bundle $\ca{V}$ over a $T$-variety $Y$, we define
	the $T$-equivariant \emph{Schur bundle} over $Y$ as:
	\[
	\bS^{\lambda}(\ca{V})
	:=
	\det(\ca{V})^{\lambda_{r}}\otimes
	\bS^{\bar{\lambda}}(\ca{V}).	
	\]
	
	For an integer sequence $\lambda=(\lambda_{1},\lambda_{2},\dots,\lambda_n)\in \Z^n$,
	we define the (generalized) Schur function using the bialternant formula 
	\begin{equation}
		\label{eq:bialternant-def}
		s_\lambda(z_1,\dots,z_n)=
		\frac{1}{
			\det (z_j^{n-i})
		}
		\begin{vmatrix}
			z_1^{\lambda_{1}+n-1}&\cdots&z_n^{\lambda_1+n-1}  \\
			z_1^{\lambda_{2}+n-2}&\cdots&z_n^{\lambda_{2}+n-2} \\
			\vdots & & \vdots \\
			z_1^{\lambda_{n}}&\cdots&z_n^{\lambda_{n}}
		\end{vmatrix}.
	\end{equation}
	It follows from the definition that $s_\lambda$ is a symmetric Laurent polynomial in $z_{i}$'s and satisfies the following
	\emph{straightening rules}:
	\begin{equation}
		\label
		{eq:straightening-rules}
		s_{(\dots,a,b,\dots)}
		=
		-
		s_{(\dots, b-1,a+1,\dots)}
		\quad\text{and}
		\quad
		s_{(\dots, a, a+1,\dots)}
		=0.
	\end{equation}
	Note that when $\lambda$ is a partition, $s_\lambda$ is the standard Schur polynomial.

	Recall that $\wev_p:\Quot \rightarrow 
	\fX$ is the stacky evaluation map at the point $p\in\bP^1$.
	For any weakly decreasing sequence (of integers) $\lambda$ with $r$ parts, we have
	\begin{equation}
		\label{eq:kcw-example}
		\wev_p^*
		(
		\bS^\lambda(\C^r)
		)
		=
		\bS^\lambda(\ca{S}^\vee_p)
		.
	\end{equation}
	Hence,
	\[
	\langle
	\bS^\lambda(\C^r)
	\rangle^{\quot,T,\ell}_{0,d}
	=
	\chi^{T}
	\left(
	\Quot,
	\det(\pi_{\star}\cS^\vee)^{-\ell}
	\cdot
	\bS^\lambda(\ca{S}^\vee_p)
	\right).
	\]
	\begin{theorem}
		\label
		{thm:Localization_calculation}
		Let $\lambda$ be a weakly decreasing sequence with $r$ parts.
		If the level $\ell$ satisfies
		$-r< \ell \leq  (N-r)$,
		we have
		\[
		\chi^{T}(\quot_d(\bP^1,N,r), 
		\det(\pi_{\star}\cS^\vee)^{-\ell}
		\cdot 
		\bS^{\lambda}(
		\cS_{p}^{\vee}
		)
		)
		=[t^d]s_{
			\lambda+(d-\ell)^r
		}(z_1,\dots,z_N),
		\]
		where $z_1,z_2,\dots,z_N$ are the roots of the equation~\eqref{eq:equivariant-bethe-ansatz}
		and $\lambda+(d-\ell)^r:=(\lambda_{1}+d-\ell,\dots \lambda_r+d-\ell)$.
	\end{theorem}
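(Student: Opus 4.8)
The plan is to deduce this from the equivariant Vafa--Intriligator formula (Theorem~\ref{thm:Vafa_Int_formula_equivariant}) by specializing the insertion to $\rV=\bS^\lambda(\C^r)$ and recognizing the resulting sum over $r$-element subsets as the generalized Laplace expansion of a single bialternant. Using~\eqref{eq:kcw-example}, the left-hand side is exactly $\langle\bS^\lambda(\C^r)\rangle^{\quot,T,\ell}_{0,d}$, and by the character formula of Section~\ref{sec:symmetric-functions} the character of $\bS^\lambda(\C^r)$ is the (generalized) Schur polynomial, so $v(z_1,\dots,z_r)=s_\lambda(z_1,\dots,z_r)$. Thus Theorem~\ref{thm:Vafa_Int_formula_equivariant} expresses the left-hand side as $[t^d]$ of a sum over all $r$-subsets $I$ of the roots of~\eqref{eq:equivariant-bethe-ansatz} of the term $s_\lambda(z_I)\,\prod_{i\in I}z_i^{N-r+d-\ell}\,\prod_{i\neq j\in I}(z_i-z_j)\big/\prod_{i\in I}P'(z_i)$.

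The first step is to simplify this summand. Writing $P(z)=\prod_{k=1}^{N}(z-z_k)$, we have $P'(z_i)=\prod_{k\neq i}(z_i-z_k)$, and splitting the product according to whether $k\in I$ or $k\in I^c$ gives $\prod_{i\in I}P'(z_i)=\big(\prod_{i\neq j\in I}(z_i-z_j)\big)\cdot\prod_{i\in I,\,j\in I^c}(z_i-z_j)$. The factor $\prod_{i\neq j\in I}(z_i-z_j)$ then cancels against the numerator, so each summand reduces to $s_\lambda(z_I)\,\prod_{i\in I}z_i^{N-r+d-\ell}\big/\prod_{i\in I,\,j\in I^c}(z_i-z_j)$.

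The second step identifies the sum of these simplified terms with the bialternant $s_{\lambda+(d-\ell)^r}(z_1,\dots,z_N)$, where the sequence $\mu:=\lambda+(d-\ell)^r$ is padded with $N-r$ trailing zeros to length $N$. Using the shift identity $s_{\lambda+(d-\ell)^r}(z_I)=\prod_{i\in I}z_i^{d-\ell}\,s_\lambda(z_I)$, which is immediate from~\eqref{eq:bialternant-def} by pulling $z_j^{d-\ell}$ out of each column, the power $\prod_{i\in I}z_i^{N-r+d-\ell}$ is absorbed so that the numerator of each term becomes the $r\times r$ minor $\det(z_j^{\mu_i+N-i})_{i\in[r],\,j\in I}$ of the alternant matrix $(z_j^{\mu_i+N-i})_{1\le i,j\le N}$. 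The complementary minor on the last $N-r$ rows and columns $I^c$ is the Vandermonde $\prod_{j<j'\in I^c}(z_j-z_{j'})$, while the denominator $\prod_{i\in I,\,j\in I^c}(z_i-z_j)$ combines with the $I$-block Vandermonde $\prod_{i<j\in I}(z_i-z_j)$ and this complementary Vandermonde to reassemble, up to sign, the full Vandermonde $\prod_{a<b}(z_a-z_b)$. Hence the sum over subsets $I$ is precisely the Laplace expansion of the alternant along its first $r$ rows, divided by the full Vandermonde, which by~\eqref{eq:bialternant-def} equals $s_\mu(z_1,\dots,z_N)$; applying $[t^d]$ then finishes the proof.

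The one genuinely delicate point I expect to be the main obstacle is the sign bookkeeping in this Laplace expansion: one must check that the Laplace sign $(-1)^{\binom{r+1}{2}+\sum_{j\in I}j}$ agrees with the sign $(-1)^m$ produced when the full Vandermonde is split into the $I$-block, the $I^c$-block, and the cross terms $\prod_{i\in I,\,j\in I^c}(z_i-z_j)$. Writing $I=\{i_1<\dots<i_r\}$, the number of inversions contributing is $m=\sum_{a=1}^{r}(i_a-a)=\sum_{j\in I}j-\binom{r+1}{2}$, so the two signs coincide and every term matches exactly. All the remaining manipulations are formal consequences of the bialternant definition~\eqref{eq:bialternant-def} and the already-established Theorem~\ref{thm:Vafa_Int_formula_equivariant}, so once the sign is pinned down the identity follows termwise.
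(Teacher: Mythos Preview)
Your proof is correct and follows essentially the same approach as the paper: specialize Theorem~\ref{thm:Vafa_Int_formula_equivariant} to $\rV=\bS^\lambda(\C^r)$, simplify using $P'(z_i)=\prod_{k\neq i}(z_i-z_k)$, and recognize the resulting sum over $r$-subsets $I$ as the generalized Laplace expansion along the first $r$ rows of the $N\times N$ bialternant for $s_{\lambda+(d-\ell)^r}(z_1,\dots,z_N)$. The paper's proof is terser and does not spell out the sign bookkeeping you carefully verified, but the argument is the same.
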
				
	\begin{proof}
		By substituting $\mathrm{V}$ with the Schur functor $\bb{S}^{\lambda}(\bb{C}^r)$ in Theorem~\ref{thm:Vafa_Int_formula_equivariant}, we obtain
		\begin{equation*}
			\langle
			\bS^\lambda(\C^r)
			\rangle^{\quot,T,\ell}_{0,d}
			=
			[t^d]
			\sum_{I}
			s_\lambda(z_I)\prod_{i\in I}\frac{z_i^{d+N-r-\ell}}{
				\prod_{j\in [N];j\neq i}(z_{i}-z_j)
			}\prod_{i,j\in I;i\ne j}(z_i-z_j)
			.
		\end{equation*}
		where the index set $I$ ranges over all $r$-element subsets of $[N]$, and $z_1, \dots, z_N$ are distinct roots of
		\[
		P(z)=
		\prod_{i=1}^{N}(z-\alpha_i^{-1})
		+(-1)^rz^{N-r-\ell}t=0. 
		\]
		Here, we used the identity $P'(z_{i})=P'(z)|_{z=z_{i}}=
		\prod_{j\in [N];j\neq i}(z_{i}-z_j)
		$ to simplify the denominator.
		
		The crucial observation is that after we plug in the bialternant formula~\eqref{eq:bialternant-def} for $s_{\lambda}(z_{I})$,
		we can rewrite the above summation as a ratio of $N\times N$ determinants
		\begin{equation*}
			[t^d]\frac{
				1
			}
			{
				\det (z_j^{N-i})
			}
			\begin{vmatrix}
				z_1^{\lambda_1+N-1+d-\ell}&z_2^{\lambda_1+N-1+d-\ell}&\cdots&z_N^{\lambda_1+N-1+d-\ell}  \\
				z_1^{\lambda_2+N-2+d-\ell}&z_2^{\lambda_2+N-2+d-\ell}&\cdots&z_N^{\lambda_2+N-2+ d-\ell} \\
				\vdots&\vdots & &\vdots \\
				z_1^{\lambda_r+N-r+d-\ell}&z_2^{\lambda_r+N-r+d-\ell}&\cdots&z_N^{\lambda_r+N-r+d-\ell} \\
				z_1^{N-r-1}&z_2^{N-r-1}& \cdots &z_N^{N-r-1}\\
				z_1^{N-r-2}&z_2^{N-r-2}& \cdots &z_N^{N-r-2}\\
				\vdots&\vdots &  &\vdots \\
				z_1&z_2& \cdots &z_N\\
				1		&1&\cdots &1
			\end{vmatrix}
		\end{equation*}
		using the generalized Laplace expansion of the determinant in the numerator along the first $r$ rows.
		We conclude the proof by noting that the above expression is the bialternant formula for
		\[
		[t^d]s_{\lambda+(d-\ell)^r}(z_1,\dots,z_N).
		\]
	\end{proof}

	\begin{rem}
		If we take the non-equivariant limit in Theorem~\ref{thm:Localization_calculation},
		equation~\eqref{eq:equivariant-bethe-ansatz} becomes:
		\begin{equation*}
			(z-1)^{N}
			+(-1)^rz^{N-r-\ell}t=0.
		\end{equation*}
		Let $z_{1},z_{2},\dots, z_{N}$ be the roots of this equation.
		Then the same bialternant-type formula computes the
		\emph{non-equivariant} Euler characteristic $\chi(\quot_d(\bP^1,N,r), 
		\det(\pi_{\star}\cS^\vee)^{-\ell}
		\cdot 
		\bS^{\lambda}(
		\cS_{p}^{\vee}
		))$.

	\end{rem}

	\subsection{Exterior and symmetric powers}
	In this subsection, we focus on the case $\ell=0$
	and use Theorem~\ref{thm:Localization_calculation}
	to derive closed-form expressions for the Euler characteristics of
	the exterior and symmetric powers of $\cS_p^\vee$ over the Quot scheme.

	\begin{cor}
		\label
		{cor:wedge-powers}
		We have
		\begin{align*}
			\sum_{d=0}^{\infty}
			q^d
			\chi^{T}(\quot_d(\bP^1,N,r), \wedge^m(\cS_p^\vee))
			=\begin{cases}
				\frac{1}{1-q}
				e_{m}(\alpha_{1}^{-1},\dots,\alpha_{N}^{-1})
				& \text{if }m< r\\[5pt]
				\frac{1}{(1-q)^2}
				e_{r}(\alpha_{1}^{-1},\dots, \alpha_{N}^{-1})		
				& \text{if } m=r
			\end{cases},
		\end{align*}
		where $e_{m}$ is the $m$-th elementary symmetric polynomial.
		In the non-equivariant limit $\alpha_i \to 1$, we have
		\[
		\sum_{d=0}^{\infty}
		q^d
		\chi(\quot_d(\bP^1,N,r), \wedge^m(\cS_p^\vee))
		=\begin{cases}
			\frac{1}{1-q}
			\binom{N}{m}
			& \text{if}\ m< r\\[5pt]
			\frac{1}{(1-q)^2}
			\binom{N}{r}
			& \text{if}\ m=r
		\end{cases}.
		\]
	\end{cor}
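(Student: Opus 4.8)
The plan is to specialize Theorem~\ref{thm:Localization_calculation} to $\ell=0$ and $\lambda=(1)^m$ (padded by $r-m$ trailing zeros), since $\wedge^m(\cS_p^\vee)=\bS^{(1)^m}(\cS_p^\vee)$, and then to extract the coefficient of $t^d$ explicitly. As $\wedge^m(\cS_p^\vee)=0$ for $m>r$, we assume $m\le r$; the case $m=0$ is $\chi^{T}(\quot_d,\O)=1$, consistent with the claimed $\tfrac{1}{1-q}e_0(\alpha^{-1})$, so we take $1\le m\le r$. Theorem~\ref{thm:Localization_calculation} gives
\[
\chi^{T}(\quot_d,\wedge^m(\cS_p^\vee))=[t^d]\,s_{\mu}(z_1,\dots,z_N),\qquad \mu=\bigl((d{+}1)^m,\,d^{\,r-m},\,0^{\,N-r}\bigr),
\]
with $z_1,\dots,z_N$ the roots of~\eqref{eq:equivariant-bethe-ansatz}. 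The first step is to record the $t$-dependence of the elementary symmetric functions of the roots: comparing coefficients of $z^{N-j}$ in $\prod_i(z-z_i)=\prod_i(z-\alpha_i^{-1})+(-1)^r z^{N-r}t$ gives $e_j(z_1,\dots,z_N)=e_j(\alpha_1^{-1},\dots,\alpha_N^{-1})$ for $j\ne r$ and $e_r(z_1,\dots,z_N)=e_r(\alpha_1^{-1},\dots,\alpha_N^{-1})+t$.

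Next I would pass to the elementary symmetric functions via the dual Jacobi--Trudi formula, which is well suited here because the $e_j(z)$ depend on $t$ in the simplest possible way. The conjugate of $\mu$ is $\mu'=(r^{\,d},m)$, of length $d+1$ (note $(r^d,r)=(r^{d+1})$ when $m=r$), so
\[
s_\mu=\det\bigl(e_{\mu'_i-i+j}\bigr)_{1\le i,j\le d+1}.
\]
Substituting the values above writes this matrix as $M_0+tD$, where $M_0=\bigl(e_{\mu'_i-i+j}(\alpha^{-1})\bigr)$ is the $t$-free part and $D$ is the $0/1$ diagonal matrix recording the positions equal to $e_r$. Locating those positions is the crux: $e_{\mu'_i-i+j}=e_r$ forces $j=i$ in rows $1,\dots,d$ (the diagonal), while in the last row ($i=d+1$, $\mu'_{d+1}=m$) it forces $j=r+d+1-m$, which lies in $\{1,\dots,d+1\}$ precisely when $m=r$. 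Thus $D=\mathrm{diag}(1,\dots,1,0)$ if $m<r$ and $D=I_{d+1}$ if $m=r$.

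It remains to compute $[t^d]\det(M_0+tD)$. For $m<r$, multilinearity in the rows shows the coefficient of the top power $t^d$ is obtained by taking the $tD$-contribution in rows $1,\dots,d$ and the $M_0$-entry in the last row; the resulting matrix is block lower triangular with diagonal blocks $I_d$ and the single entry $e_{m}(\alpha^{-1})$, so the coefficient equals $e_m(\alpha^{-1})$ for every $d$, and $\sum_d q^d e_m(\alpha^{-1})=\tfrac{1}{1-q}e_m(\alpha^{-1})$. For $m=r$, $\det(tI_{d+1}+M_0)$ has degree $d+1$ in $t$, and its $t^d$-coefficient is $\mathrm{tr}(M_0)=(d+1)e_r(\alpha^{-1})$; since $\sum_d (d+1)q^d=\tfrac{1}{(1-q)^2}$, this yields $\tfrac{1}{(1-q)^2}e_r(\alpha^{-1})$. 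The non-equivariant formula follows by letting $\alpha_i\to1$, so that $e_j(\alpha^{-1})\to\binom{N}{j}$. The main obstacle is exactly the case analysis of $D$: the fact that for $m=r$ the entry $e_r$ fills the entire diagonal — producing the factor $d+1$ and hence $(1-q)^{-2}$ — whereas for $m<r$ it misses the last row and the answer is $d$-independent.
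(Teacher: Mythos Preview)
Your proof is correct and follows essentially the same approach as the paper: specialize Theorem~\ref{thm:Localization_calculation} to $\lambda=(1)^m$, $\ell=0$, apply the dual (second) Jacobi--Trudi identity to the resulting Schur function, and read off the $t^d$-coefficient using that only $e_r$ depends on $t$. Your decomposition $M_0+tD$ makes the extraction of the leading coefficient slightly more explicit than the paper's narrative argument, but the content is the same.
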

	\begin{proof}
		Set $\lambda=(1)^{m}$ and $\ell=0$ in Theorem~\ref{thm:Localization_calculation}.
		Then $\lambda+(d)^r=(1+d,\dots,1+d,d,\dots,d)$ with $1+d$ and $d$ repeated $m$ and $r-m$ times, respectively.
		Note that by~\eqref{eq:equivariant-bethe-ansatz}, the elementary symmetric polynomials in $z_i$'s are 
		\begin{equation}
			\label
			{eq:elementary-t}
			e_{j}(z_1,\dots,z_N)=\begin{cases}
				e_{j}(\alpha_{1}^{-1},\dots, \alpha_{N}^{-1})
				& \quad \text{if}\ j\ne r\\[5pt]
				e_{r}(\alpha_{1}^{-1},\dots, \alpha_{N}^{-1})		
				+t& \quad \text{if}\ j= r
			\end{cases}.
		\end{equation}
		We use the second Jacobi--Trudi formula to express the Schur polynomial $s_{\lambda+(d)^r}(z_1,\dots,z_N)$ as a $(d+1)\times(d+1)$ determinant
		\[\begin{vmatrix}
			e_{r}& e_{r+1}&e_{r+2}&\cdots& e_{r+d-1}&e_{r+d}\\
			e_{r-1}&e_{r}&e_{r+1}&\cdots &e_{r+d-2}&e_{r+d-1}\\
			\vdots&\vdots&\vdots& &\vdots &\vdots\\
			e_{r-d+1}&e_{r-d+2}&e_{r-d+3}&\cdots& e_r&e_{r+1}\\
			e_{m-d}&e_{m-d+1}&e_{m-d+2}&\cdots& e_{m-1} &e_m\\
		\end{vmatrix}.
		\]
		When $m<r$, the above determinant is a polynomial in $t$ of degree $d$ with leading coefficient $e_m=
		e_{m}(\alpha_{1}^{-1},\dots,\alpha_{N}^{-1})
		$.
		When $m=r$, it is a polynomial in $t$ of degree $d+1$, with the coefficient of $t^{d}$ being $
		(d+1)
		e_{r}(\alpha_{1}^{-1},\dots, \alpha_{N}^{-1})
		$.
		We conclude that for any $d\geq 0$:
		\[
		\chi^{T}(\quot_d(\bP^1,N,r), \wedge^m(\cS_p^\vee))
		=\begin{cases}
			e_{m}(\alpha_{1}^{-1},\dots,\alpha_{N}^{-1})
			&  \text{if}\ m< r\\[5pt]
			(d+1)
			e_{r}(\alpha_{1}^{-1},\dots, \alpha_{N}^{-1})			
			& \text{if}\ m=r
		\end{cases}
		.
		\]
		A simple calculation implies the formulas of the generating series.
		The formulas in the non-equivariant limit follow from the fact that
		$e_i(1,\dots,1)=\binom{N}{i}$.
	\end{proof}
	\begin{cor}
		\label
		{cor:symmetric-power}
		For $m<r$, we have
		\begin{align*}
			\sum_{d=0}^{\infty}q^d
			\chi^{T}
			(\quot_d(\bP^1,N,r), \Sym^m(\cS_p^\vee))
			=
			\frac{1}{1-q}
			h_{m}(\alpha_{1}^{-1},\dots,\alpha_{N}^{-1}),
		\end{align*}
		where $h_{m}$ is the $m$-th complete homogeneous symmetric polynomial.
		For $m=r$, the generating series is equal to
		\[
		\frac{1 }{1-q}h_{r}(\alpha_{1}^{-1},\dots,\alpha_{N}^{-1})
		+\frac{(-1)^{r+1}q}{(1-q)^{2}}e_{r}(\alpha_{1}^{-1},\dots,\alpha_{N}^{-1}).
		\]
		In the non-equivariant limit $\alpha_i\to 1$, we have
		\[
		\sum_{d=0}^{\infty}q^d
		\chi
		(\quot_d(\bP^1,N,r), \Sym^m(\cS_p^\vee))
		=\begin{cases}
			\frac{1}{1-q}
			\binom{N+m-1}{m}
			& \text{if}\ m< r\\[5pt]
			\frac{1}{1-q}
			\binom{N+r-1}{r}
			+
			(-1)^{r+1}
			\frac{q}{(1-q)^2}
			\binom{N}{r}
			& \text{if}\ m=r
		\end{cases}.
		\]
		
	\end{cor}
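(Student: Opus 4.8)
The plan is to mirror the proof of Corollary~\ref{cor:wedge-powers}, now with the row partition $\lambda=(m)$ in place of the column partition $(1)^m$. Setting $\lambda=(m)$ (so that $\bS^{(m)}(\cS_p^\vee)=\Sym^m(\cS_p^\vee)$) and $\ell=0$ in Theorem~\ref{thm:Localization_calculation} gives
\[
\chi^{T}(\quot_d(\bP^1,N,r),\Sym^m(\cS_p^\vee))=[t^d]\,s_{\mu}(z_1,\dots,z_N),\qquad \mu:=(m+d,d,\dots,d),
\]
a partition with $r$ parts. As in the wedge case I would \emph{not} expand $s_\mu$ in complete homogeneous symmetric functions (which are not simple in the $z_i$), but instead use the second (dual) Jacobi--Trudi formula $s_\mu=\det\big(e_{\mu'_i-i+j}(z)\big)_{1\le i,j\le m+d}$, because by~\eqref{eq:elementary-t} every $e_j(z)$ is the \emph{constant} $e_j(\alpha_1^{-1},\dots,\alpha_N^{-1})$ except those of index exactly $r$, which carry the deformation $e_r(\alpha_1^{-1},\dots,\alpha_N^{-1})+t$. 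Here $\mu'=(r^{d},1^{m})$, so the first $d$ rows of this $(m+d)\times(m+d)$ determinant have entries $e_{r-i+j}$ and the last $m$ rows have entries $e_{1-i+j}$.

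The key observation is that, after substituting~\eqref{eq:elementary-t}, the variable $t$ occurs only at the entries whose subscript equals $r$: these are the $d$ diagonal entries $(i,i)$, $1\le i\le d$, of the top block, together with exactly one corner entry at $(d+1,\,r+d)$ when $m=r$ (and no lower-row entry when $m<r$). Since these positions lie in distinct rows and columns, $\det(B+t\Delta)$ is a polynomial in $t$ of degree $\le d$ (resp.\ $\le d+1$ when $m=r$), where $B$ is the constant matrix obtained at $t=0$. I would then extract $[t^d]$ by multilinear (cofactor) expansion over subsets $S$ of the $t$-positions, each contributing $t^{|S|}$ times a signed complementary minor of $B$.

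For $m<r$ there are exactly $d$ such positions, all diagonal, so the coefficient of $t^d$ forces $S$ to be all of them. Deleting the first $d$ rows and columns leaves the bottom-right $m\times m$ block $\big(e_{1-i+l}(\alpha_1^{-1},\dots,\alpha_N^{-1})\big)_{1\le i,l\le m}$, which contains no index-$r$ entry (as $1-i+l\le m<r$) and equals $h_m(\alpha_1^{-1},\dots,\alpha_N^{-1})$, again by the dual Jacobi--Trudi identity (this minor is exactly $s_{(m)}=h_m$ expressed in the $e_j$). Hence $\chi^{T}=h_m(\alpha_1^{-1},\dots,\alpha_N^{-1})$ for every $d\ge 0$, and summing $\sum_d q^d=\tfrac1{1-q}$ gives the first case.

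For $m=r$ there are $d+1$ $t$-positions, and the size-$d$ subsets split into two types. The unique all-diagonal subset (omitting the corner) again yields the bottom-right block, now $h_r(\alpha_1^{-1},\dots,\alpha_N^{-1})$. The remaining $d$ subsets each contain the corner $(d+1,r+d)$ together with all but one diagonal position; the corner contributes the sign $(-1)^{(d+1)+(r+d)}=(-1)^{r+1}$. The main technical point, and the step I expect to be the real work, is to show that each of the resulting $r\times r$ complementary minors evaluates to $e_r(\alpha_1^{-1},\dots,\alpha_N^{-1})$ independently of which diagonal position is dropped; I would check this by a direct minor computation (confirmed in small cases, e.g.\ $r=2$ with $d=1,2$). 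Granting it, the corner-type subsets contribute $(-1)^{r+1}d\,e_r(\alpha_1^{-1},\dots,\alpha_N^{-1})$, so $\chi^{T}=h_r(\alpha_1^{-1},\dots,\alpha_N^{-1})+(-1)^{r+1}d\,e_r(\alpha_1^{-1},\dots,\alpha_N^{-1})$. Finally $\sum_d q^d=\tfrac1{1-q}$ and $\sum_d d\,q^d=\tfrac{q}{(1-q)^2}$ assemble the stated generating series, and the non-equivariant limit follows from $e_i(1,\dots,1)=\binom{N}{i}$ and $h_i(1,\dots,1)=\binom{N+i-1}{i}$.
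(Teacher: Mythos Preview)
Your approach is correct and essentially the same as the paper's: both set $\lambda=(m)$, $\ell=0$ in Theorem~\ref{thm:Localization_calculation}, apply the second Jacobi--Trudi formula, and observe via~\eqref{eq:elementary-t} that $t$ appears only at the $d$ diagonal entries of the top $d\times d$ block together with (when $m=r$) the single corner entry $(d+1,d+r)$. The only difference is how $[t^d]$ is extracted. The paper does a Laplace expansion along the first $d$ rows and argues that only the term $J=\{1,\dots,d\}$ contributes in degree $\ge d$ (for any other $J$, the complementary minor in the bottom rows has a zero column or loses its $t$). This reduces the answer to the product of the upper-left block determinant $\det(A+tI)$, whose $t^{d}$ and $t^{d-1}$ coefficients are $1$ and $\operatorname{tr}A=d\,e_r(\alpha^{-1})$, with the bottom-right block determinant $h_m(\alpha^{-1})+(-1)^{r+1}\delta_{m,r}\,t$; reading off $[t^d]$ of this product gives the result directly.

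Your ``main technical point'' is in fact immediate and does not require case-checking. For each type-(B) subset the complementary $r\times r$ minor has rows $a,d+2,\dots,d+r$ and columns $a,d+1,\dots,d+r-1$. In column $a$ the entries in rows $d+k$ ($k\ge 2$) are $e_{1-(d+k)+a}$ with $a\le d$, hence have negative subscript and vanish; so the first column is $(e_r(\alpha^{-1}),0,\dots,0)^T$. Expanding along it leaves the $(r-1)\times(r-1)$ matrix with entries $e_{l'-k'}$, which is upper unitriangular. Thus each type-(B) minor equals $e_r(\alpha^{-1})$, and your sign $(-1)^{r+1}$ is correct. So your argument goes through, but the paper's Laplace-expansion packaging is slightly slicker because it bundles all $d$ of these minors into the single trace computation for the $t^{d-1}$ coefficient of the top block.
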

	\begin{proof}
		We only need to show that if $m<r$, we have
		$$
		\chi^{T}
		(\quot_d(\bP^1,N,r), \Sym^m(\cS_p^\vee))
		=
		h_{m}(\alpha_{1}^{-1},\dots,\alpha_{N}^{-1})
		$$
		for any $d\geq 0$, and
		$$
		\chi^{T}
		(\quot_d(\bP^1,N,r), \Sym^r(\cS_p^\vee))
		=\begin{cases}
			h_{r}(\alpha_{1}^{-1},\dots,\alpha_{N}^{-1})
			& \quad \text{if}\ d=0
			\\[5pt]
			h_{r}(\alpha_{1}^{-1},\dots,\alpha_{N}^{-1})
			+d\cdot e_r(\alpha_{1}^{-1},\dots,\alpha_{N}^{-1})
			& \quad\text{if}\ d>0
		\end{cases}.
		$$
		Set $\lambda=(m)$ and $\ell=0$ in Theorem~\ref{thm:Localization_calculation}.
		Then $\lambda+(d)^r=(m+d,d,\dots,d,0,\dots,0)$ with $d$ repeated $r-1$ times.
		Using the second Jacobi--Trudi formula, we express $s_{\lambda+(d)^r}(z_1,\dots,z_N)$ as
		\begin{equation}
			\label
			{eq:sym-determinant}
			\left|
			\begin{array}{ccccc|ccccc}
				e_{r}& e_{r+1}&\cdots&  \cdots& e_{r+d-1}&  e_{r+d} & \cdots &\cdots &\cdots &\cdots \\
				e_{r-1}&e_{r}&\cdots &  \cdots &e_{r+d-2}&   e_{r+d-1} & \ddots  & & &\vdots \\
				\vdots&\vdots& \ddots& &\vdots & \vdots & & \ddots& &\vdots \\
				\vdots&\vdots& & \ddots &\vdots &\vdots &  & & \ddots&\vdots \\
				e_{r-d+1}&e_{r-d+2}&\cdots& \cdots & e_r& e_{r+1} & \cdots &\cdots &\cdots &\cdots \\
				\hline
				0&0&\cdots& 0 &1 &e_1 & e_{2} & \cdots &\cdots & e_{m} \\
				0 & \ddots &\cdots &\cdots & 0 & 1 &e_{1} &\cdots &\cdots & e_{m-1}
				\\
				\vdots&\vdots&\ddots &&\vdots & \vdots & \vdots &\ddots &&\vdots \\
				\vdots&\vdots& &\ddots &\vdots &0 & \vdots &&\ddots &\vdots \\
				0 &  0 &\cdots& \cdots & 0& 0 & 0 &\cdots &1 &e_{1}
			\end{array}
			\right|
			.
		\end{equation}
		The upper-left square block is of size $d\times d$ and the bottom-right one has size $m\times m$.
		By~\eqref{eq:elementary-t}, we see that when $m<r$, $t$ only occurs on the diagonal of the upper-left block
		and when $m=r$, $t$ also appears in the upper-right corner of
		the bottom-right square block.
		
		Consider the Laplace expansion of~\eqref{eq:sym-determinant} along the first $d$ rows.
		In both cases, $m<r$ or $m=r$,
		there is only one term in the expansion of degree $\geq d$, given by the product of the
		determinants of the upper-left and bottom-right blocks.
		Note that the coefficients of $t^{d}$ and $t^{d-1}$ in the determinant of the upper-left block
		are $1$ and $d\cdot e_{r}(\alpha_{1}^{-1},\dots, \alpha_{N}^{-1})$, respectively,
		and the determinant of the bottom-right block is
		$$
		h_{m}(\alpha_{1}^{-1},\dots,\alpha_{N}^{-1})
		+(-1)^{r+1}\delta_{r,m} t
		.
		$$
		By taking the coefficient of $t^d$ in the product of these two determinants,
		we obtain the formulas of the Euler characteristics at the beginning of the proof.
		
	\end{proof}

	\subsection{Vanishing results}
	\label{subsec:vanishing}
	In this subsection, we establish vanishing results for the $K$-theoretic Quot scheme invariants of
	$\bS^\lambda((\C^r)^\vee)$.
	By~\eqref{eq:kcw-example},
	we have
	\[
	\langle
	\bS^{\lambda}((\C^r)^\vee)
	\rangle^{\quot,T,\ell}_{0,d}
	=
	\chi^{T}(\quot_d(\bP^1,N,r), 
	\det(\pi_{\star}\cS^\vee)^{-\ell}
	\cdot 
	\bS^{\lambda}(
	\cS_{p}
	)
	).
	\]
	For a sequence $\nu=(\nu_1,\dots,\nu_r)$,
	we define
	$$
	\tilde{\nu}:=(-\nu_r,\dots,-\nu_1)
	\quad\text{and}\quad
	\tilde{\nu}+(d-\ell)^r
	:=
	(d-\ell-\nu_{r},\dots,d-\ell -\nu_{1}).
	$$
	Then
	\[
	\bS^{\lambda}((\C^r)^\vee)
	=
	\bS^{\tilde{\lambda}}(\C^r),
	\]
	and Theorem~\ref{thm:Localization_calculation} implies the following:
	\begin{cor}
		\label
		{cor:EulerCharShurBundle}
		Let $\lambda$ be a weakly decreasing sequence with $r$ parts.
		If the level $\ell$ satisfies $-r< \ell \leq  (N-r)$, we have
		\[
		\langle
		\bS^{\lambda}((\C^r)^\vee)
		\rangle^{\quot,T,\ell}_{0,d}
		=
		\chi^{T}(\quot_d(\bP^1,N,r), \det(\pi_{\star}\cS^\vee)^{-\ell}
		\cdot 
		\bS^\lambda(\cS_p)
		)
		=[t^d]
		s_{
			\tilde{\lambda}+(d-\ell)^r
		}(z_1,\dots,z_N)
		\]
		where $z_1,z_2,\dots,z_N$ are the roots of the equation~\eqref{eq:equivariant-bethe-ansatz}.
	\end{cor}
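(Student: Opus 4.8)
The plan is to reduce the statement directly to Theorem~\ref{thm:Localization_calculation} via the representation-theoretic identity recorded just above the corollary, namely $\bS^{\lambda}((\C^r)^\vee)=\bS^{\tilde\lambda}(\C^r)$ with $\tilde\lambda=(-\lambda_r,\dots,-\lambda_1)$. First I would justify this identity at the level of characters. Since the character of $\bS^{\lambda}(\rV)$ is $s_\lambda$ evaluated at the eigenvalues of $\rV$, the character of $\bS^{\lambda}((\C^r)^\vee)$ is $s_\lambda(x_1^{-1},\dots,x_r^{-1})$, where $x_1,\dots,x_r$ are the eigenvalues on $\C^r$. A short bialternant manipulation (reverse the $r$ rows and factor the product of the variables out of numerator and denominator) gives $s_\lambda(x_1^{-1},\dots,x_r^{-1})=s_{\tilde\lambda}(x_1,\dots,x_r)$; equivalently, this is the standard fact that the $\GLr$-dual of the irreducible of highest weight $\lambda$ has highest weight $-w_0\lambda=(-\lambda_r,\dots,-\lambda_1)$. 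Because a (virtual) $\GLr$-representation is determined by its character, the displayed identity follows.

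Next I would transport this across the evaluation map. Since $\wev_p^*(\C^r)=\cS_p^\vee$ (the pullback of $\mathrm{S}=(\C^r)^\vee$ is $\cS_p$) and Schur functors commute with pullback, applying $\wev_p^*$ yields
\[
\bS^\lambda(\cS_p)=\wev_p^*\big(\bS^\lambda((\C^r)^\vee)\big)=\wev_p^*\big(\bS^{\tilde\lambda}(\C^r)\big)=\bS^{\tilde\lambda}(\cS_p^\vee).
\]
Consequently both Euler characteristics appearing in the corollary equal the level-$\ell$ invariant of $\bS^{\tilde\lambda}(\cS_p^\vee)$ over $\quot_d(\bP^1,N,r)$.

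The final step is purely bookkeeping: if $\lambda$ is weakly decreasing then so is $\tilde\lambda=(-\lambda_r,\dots,-\lambda_1)$, so Theorem~\ref{thm:Localization_calculation} applies verbatim with $\lambda$ replaced by $\tilde\lambda$ and the same constraint $-r<\ell\le(N-r)$, giving
\[
\chi^{T}\big(\quot_d(\bP^1,N,r),\det(\pi_{*}\cS^\vee)^{-\ell}\cdot\bS^{\tilde\lambda}(\cS_p^\vee)\big)=[t^d]\,s_{\tilde\lambda+(d-\ell)^r}(z_1,\dots,z_N),
\]
and $\tilde\lambda+(d-\ell)^r=(d-\ell-\lambda_r,\dots,d-\ell-\lambda_1)$ matches the notation fixed before the statement. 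I do not expect a genuine obstacle here, since all the analytic content lives in Theorem~\ref{thm:Localization_calculation}; the only point requiring care is the duality identity $s_\lambda(x^{-1})=s_{\tilde\lambda}(x)$ and confirming that the index shift comes out exactly as claimed.
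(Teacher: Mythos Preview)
Your proposal is correct and follows exactly the paper's approach: the paper simply records the identity $\bS^{\lambda}((\C^r)^\vee)=\bS^{\tilde\lambda}(\C^r)$ immediately before the corollary and then states that Theorem~\ref{thm:Localization_calculation} implies the result. Your write-up is in fact more detailed than the paper's, which treats the corollary as an immediate consequence without further argument.
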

	
	For the rest of this subsection, we assume $\ell=0$ and
	begin with a preliminary vanishing result.

	\begin{prop}
		\label
		{cor:easy-vanishing}
		Let $\lambda=(\lambda_{1},\dots,\lambda_{r})$ be a partition (with nonnegative parts).
		If $\lambda$ is nonempty and
		$d\geq \lambda_{1} - (N-r)$,
		then
		\begin{align*}
			\langle
			\bS^{\lambda}(
			(\C^r)^\vee
			)
			\rangle
			^{\quot,T}_{0,d}
			=
			\chi^{T}(\quot_d(\bP^1,N,r), \bS^\lambda(\cS_p))
			=0.
		\end{align*} 
	\end{prop}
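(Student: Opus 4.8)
The plan is to reduce the statement to an explicit coefficient extraction via the bialternant formula already established, and then to split into two regimes according to the size of $d$ relative to $\lambda_1$. First I would invoke Corollary~\ref{cor:EulerCharShurBundle} with $\ell=0$ to rewrite
\[
\chi^{T}(\quot_d(\bP^1,N,r),\bS^\lambda(\cS_p))=[t^d]\,s_{\mu}(z_1,\dots,z_N),
\]
where $\mu:=\tilde\lambda+(d)^r=(d-\lambda_r,\dots,d-\lambda_1)$ is regarded as a length-$N$ integer sequence by appending $N-r$ trailing zeros, and $z_1,\dots,z_N$ are the roots of~\eqref{eq:equivariant-bethe-ansatz}. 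The entire task becomes showing that this coefficient of $t^d$ vanishes. The natural dichotomy is whether $\mu$ is an honest partition, i.e.\ whether its $r$-th entry $\mu_r=d-\lambda_1$ is nonnegative.

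For the regime $\lambda_1-(N-r)\le d<\lambda_1$ (so $\mu$ is \emph{not} a partition) I would argue that $s_\mu$ vanishes identically, by a repeated-row argument in the bialternant~\eqref{eq:bialternant-def}. The exponents in the numerator determinant are $\mu_i+N-i$; for $i>r$ these run over $\{0,1,\dots,N-r-1\}$, while the smallest exponent coming from the first block is $\mu_r+N-r=d-\lambda_1+(N-r)$. The hypothesis $d\ge\lambda_1-(N-r)$ makes this quantity $\ge 0$, and $d<\lambda_1$ makes it $\le N-r-1$; hence it coincides with one of the second-block exponents, two rows of the determinant are equal, and $s_\mu\equiv 0$. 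Equivalently, the straightening rules~\eqref{eq:straightening-rules} collapse $s_\mu$ to $0$. This is precisely the step in which the stated bound on $d$ is used.

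For the regime $d\ge\lambda_1$ the sequence $\mu$ is a genuine partition with $|\mu|=rd-|\lambda|$, so $s_\mu$ is an ordinary symmetric polynomial in the $z_i$, homogeneous of degree $|\mu|$. Here I would run a degree estimate in $t$. By~\eqref{eq:elementary-t} the variable $t$ enters only through $e_r(z)=e_r(\alpha_1^{-1},\dots,\alpha_N^{-1})+t$, so assigning each $z_i$ weight $1$ forces $t$ to carry weight $r$. Writing $s_\mu$ as a polynomial in $e_1(z),\dots,e_N(z)$ and matching weights, any monomial that produces $t^{b}$ satisfies $rb\le |\mu|$, whence $\deg_t s_\mu\le |\mu|/r=d-|\lambda|/r<d$ because $\lambda$ is nonempty. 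Therefore $[t^d]s_\mu=0$, completing this case and hence the proof.

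The main obstacle I anticipate is the non-partition regime: the clean degree estimate only handles $d\ge\lambda_1$, and pushing the vanishing all the way down to $d\ge\lambda_1-(N-r)$ requires the separate observation that the bialternant degenerates. Pinning the inequalities on the critical exponent $d-\lambda_1+(N-r)$ down exactly—so that it lands in $\{0,\dots,N-r-1\}$ if and only if $\lambda_1-(N-r)\le d<\lambda_1$—is the delicate bookkeeping, and it is exactly here that the hypothesis is sharp.
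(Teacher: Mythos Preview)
Your proposal is correct and follows essentially the same route as the paper: both invoke Corollary~\ref{cor:EulerCharShurBundle} with $\ell=0$, split into the two regimes $d<\lambda_1$ and $d\ge\lambda_1$, handle the first by the repeated-row argument in the bialternant, and handle the second by bounding the $t$-degree of $s_{\tilde\lambda+(d)^r}$ below $d$. The only cosmetic difference is that the paper phrases the degree bound via the second Jacobi--Trudi determinant (tracking where $e_r$ can appear), whereas your homogeneity count $rb\le|\mu|=rd-|\lambda|$ reaches the same conclusion more directly.
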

	\begin{proof}
		We apply Corollary~\ref{cor:EulerCharShurBundle} in the case $\ell=0$.
		Recall $
		\tilde{\lambda}+(d)^r
		=(d-\lambda_r,\dots,d-\lambda_{1})$.
		When $-(N-r)\le d-\lambda_{1}<0$, the $r$-th row of the determinant in the numerator of the bialternant formula (see~\eqref{eq:bialternant-def}) for $s_{\tilde{\lambda}+(d)^r}$ has exponents $0\le d-\lambda_{1} +N-r\le N-r-1$.
		Thus, the $r$-th row equals one of the last $(N-r)$ rows, and therefore, the determinant
		equals zero.
		
		When $d-\lambda_{1}\ge 0$, $\tilde{\lambda}+(d)^r$ is an integer partition strictly contained (since $\lambda$ is not trivial) in the rectangular partition $(d)^{r}$.
		Note that the highest power of $e_r$ appearing in the second Jacobi--Trudi expansion of $s_{\tilde{\lambda}+(d)^r}$ in terms of elementary symmetric polynomials
		is strictly less than $d$ (cf. the proof of Corollary~\ref{cor:wedge-powers}).
		Thus, $s_{\tilde{\lambda}+(d)^r}(z_1,\dots,z_N)$ is a polynomial in $t$ of degree at most $d-1$,
		and therefore, its coefficient of $t^{d}$ is 0.
	\end{proof}

	Let $\P_{r,g}$ be the set of partitions contained within the rectangle $(g)^r$.
	The preliminary vanishing result in Proposition~\ref{cor:easy-vanishing}
	leads to the following interesting result in higher genus.
	\begin{cor}\label{cor:vanishin_higher_genus}
		Let $\lambda\in\P_{r,g}$ and  $d\geq g+(2g-1)(N-r)$. Then
		\begin{align*}
			\langle \bS^{\lambda}( \mathrm{S}^\vee)\rangle_{g,d}^{\quot}=\chi^{\vir}(\QuotC, \bS^{\lambda}\left(\cS_{p}^{\vee}\right))= \begin{cases}
				\binom{N}{r}^{g}& \lambda = (g)^r,\\
				0& \text{otherwise}.
			\end{cases}
		\end{align*}
		In particular, when $d\geq g+(2g-1)(N-r)$,
		\begin{align*}
			\langle \Sym^{m}\left( \mathrm{S^{\vee}}\right)\rangle_{g,d}^{\quot} = 0\quad\text{and}\quad
			\langle \left(\det \mathrm{S^{\vee}}\right)^{m}\rangle_{g,d}^{\quot} = 0
		\end{align*}
		for any $m<g$.
	\end{cor}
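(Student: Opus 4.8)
The plan is to reduce to genus zero via the handle/induction formula and then run a degree estimate in the formal variable $t$. Since $\mathrm{S}^\vee=\C^r$ and $\bS^\lambda(\cS_p^\vee)=\wev_p^*(\bS^\lambda(\C^r))$, the genus-induction formula stated just before Theorem~\ref{thm:Vafa_Int_formula_equivariant_all_genera} gives
\[
\langle \bS^\lambda(\mathrm{S}^\vee)\rangle^{\quot}_{g,d}=\langle \bS^\lambda(\mathrm{S}^\vee)\cdot \rH^{g}\rangle^{\quot}_{0,d}.
\]
Writing $h(x_1,\dots,x_r)$ for the character of $\rH$ and expanding the symmetric Laurent polynomial $s_\lambda\cdot h^{g}=\sum_\mu c_\mu\,s_\mu$ into (generalized) Schur polynomials indexed by weakly decreasing integer sequences $\mu$, linearity of $\langle\,\cdot\,\rangle^{\quot}_{0,d}$ together with Theorem~\ref{thm:Localization_calculation} at level $\ell=0$ reduces the claim to a finite sum $\sum_\mu c_\mu\,[t^d]\,s_{\mu+(d)^r}(z_1,\dots,z_N)$, where $z_1,\dots,z_N$ are the roots of~\eqref{eq:intro-non-equivariant-bethe-equation}.

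The argument then rests on two inputs. First I would pin down the top-degree part of $h$: from $\rH=\sum_{\alpha\in\prk}\rO_\alpha\,\rO_\alpha^{*}$ and Lemma~\ref{lem:leading-term-one}, each $\rO_\alpha$ has character $1+(\text{terms of strictly negative degree})$, while $\rO_\alpha^{*}=\rO_{\alpha^{*}}\det(\rS)$ has top part $\det(\rS)=(x_1\cdots x_r)^{-1}$; hence the highest-degree part of $h$ is $\binom{N}{r}(x_1\cdots x_r)^{-1}$, of degree $-r$. Since $s_\lambda$ is homogeneous of degree $|\lambda|$, the top-degree term of $s_\lambda h^{g}$ is $\binom{N}{r}^{g}\,s_{\lambda-(g)^r}$, of degree $|\lambda|-rg$, and every other $\mu$ occurring satisfies $|\mu|<|\lambda|-rg$. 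Second, I would establish that $[t^d]\,s_{\mu+(d)^r}=0$ whenever $|\mu|<0$: when $\mu_r+d\ge0$ this follows from the first Jacobi--Trudi formula $s_{\mu+(d)^r}=\det(h_{\mu_i+d-i+j})_{1\le i,j\le r}$ together with the fact that, under~\eqref{eq:intro-non-equivariant-bethe-equation}, the generating series $\sum_m h_m u^m=\bigl((1-u)^N+(-1)^r t u^r\bigr)^{-1}$ shows $h_m$ is a polynomial in $t$ of degree $\lfloor m/r\rfloor$, so summing floors over any permutation yields $\deg_t s_{\mu+(d)^r}\le d+|\mu|/r<d$; when $-(N-r)\le \mu_r+d\le-1$, the $r$-th row of the bialternant numerator~\eqref{eq:bialternant-def} coincides with one of the trailing rows (exponents $0,\dots,N-r-1$), exactly as in the proof of Proposition~\ref{cor:easy-vanishing}, so the determinant vanishes.

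Assembling these: for $\lambda\in\P_{r,g}$ with $\lambda\ne(g)^r$ one has $|\lambda|<rg$, so every $\mu$ occurring has $|\mu|<0$ and the whole sum vanishes; for $\lambda=(g)^r$ the unique term with $|\mu|\ge0$ is $\mu=\emptyset$, with coefficient $\binom{N}{r}^{g}$, and $[t^d]\,s_{(d)^r}=1$ (computed as in the proof of Corollary~\ref{cor:wedge-powers}), giving $\binom{N}{r}^{g}$. The assertions for $\Sym^m(\mathrm{S}^\vee)=\bS^{(m)}(\mathrm{S}^\vee)$ and $(\det\mathrm{S}^\vee)^m=\bS^{(m)^r}(\mathrm{S}^\vee)$ with $m<g$ are then the special cases $\lambda=(m)$ and $\lambda=(m)^r$, both lying in $\P_{r,g}$ and distinct from $(g)^r$.

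The main obstacle is the bookkeeping behind the precise bound: the two vanishing inputs apply to a term $\mu$ only once $\mu_r+d\ge-(N-r)$, so one must control the smallest part $\mu_r$ over all $\mu$ occurring in $s_\lambda h^{g}$. Tracking per-variable exponents through $\rO_\alpha$, $\rO_\alpha^{*}$ (via Corollary~\ref{cor:EulerCharShurBundle} and Lemma~\ref{lem:leading-term-one}) and the $g$-fold product gives $\mu_r\ge \lambda_r-g\,(2(N-r)+1)$, and imposing $\mu_r+d\ge-(N-r)$ in the worst case $\lambda_r=0$ is exactly $d\ge g+(2g-1)(N-r)$. Carrying out this exponent estimate cleanly, rather than through the crude ranges sketched above, is the one genuinely delicate step.
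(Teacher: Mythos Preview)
Your argument is correct and follows the same overall architecture as the paper: reduce to genus zero via the handle formula, isolate the top-degree contribution $\binom{N}{r}^g(x_1\cdots x_r)^{-g}$ of $h^g$, and show that all subleading Schur constituents contribute zero under the stated bound on $d$. Where you diverge is in the vanishing step. The paper observes that $\rH=(\det\mathrm{S})\cdot\mathrm{U}$ with $\mathrm{U}$ a combination of $\bS^\nu(\mathrm{S})$ for $\nu\in\P_{r,2(N-r)}$; absorbing the $g$ copies of $\det\mathrm{S}$ into $\bS^\lambda(\mathrm{S}^\vee)$ converts it to $\bS^{(g-\lambda_r,\dots,g-\lambda_1)}(\mathrm{S})$, so the whole product is a combination of $\bS^\rho(\mathrm{S})$ with $\rho$ genuine partitions in $\P_{r,2g(N-r)+g}$, and Proposition~\ref{cor:easy-vanishing} applies verbatim (its second Jacobi--Trudi argument giving the $t$-degree bound). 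You instead keep the expansion in $s_\mu$ with a priori mixed-sign $\mu$ and establish the needed vanishing via the \emph{first} Jacobi--Trudi identity together with the observation that the $t$-degree of $h_m(z_1,\dots,z_N)$ is $\lfloor m/r\rfloor$. Both routes yield the same threshold $d\ge g+(2g-1)(N-r)$; the paper's is slightly more economical because it recycles Proposition~\ref{cor:easy-vanishing} wholesale, while yours proves a marginally more general vanishing (valid even when $\mu_1>0$) that is not actually needed here---the same per-variable exponent tracking you use to bound $\mu_r$ from below also gives $\mu_1\le\lambda_1-g\le0$, so every $\mu$ already lies in the setup of Proposition~\ref{cor:easy-vanishing}.
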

	\begin{proof}
		Using the genus-induction formula from Section~\ref{subsec:VI-in-all-genera},
		we have
		\begin{align*}
			\langle \bS^{\lambda}( \mathrm{S}^\vee)\rangle_{g,d}^{\quot} =
			\langle \bS^{\lambda}( \mathrm{S}^\vee)\cdot \rH^{g} \rangle_{0,d}^{\quot}.
		\end{align*}
		By definition,
		$\rH = (\det\mathrm{S})\cdot \mathrm{U}$, where $\mathrm{U}$
		is a unique linear combination of $\bS^{\nu}(\mathrm{S})$,
		with $\nu \in \P_{r,2(N-r)}$.
		Lemma~\ref{lem:leading-term-one} shows that the coefficient of
		$1 = \bS^{\emptyset}(\rS)$ in $\mathrm{U}$ is $\binom{N}{r}$.
		Thus,
		$$
		\bS^\lambda(\mathrm{S}^\vee)\cdot \mathrm{H}^{g}  = \bS^{\lambda}(\mathrm{S}^\vee)\cdot (\det\mathrm{S})^g\cdot \mathrm{U}^g =\bS^{(g)^r-(\lambda_r,\dots,\lambda_1)}(\mathrm{S})\cdot \mathrm{U}^g
		$$
		can be written as a linear combination of
		$\bS^{\rho}(\mathrm{S})$ with $\rho \in \P_{r,2g(N-r)+g}$.
		The term $1 = \bS^{\emptyset}(\rS)$ appears in this expansion only
		when $\lambda=(g)^r$, with coefficient $\binom{N}{r}^g$.
		Proposition~\ref{cor:easy-vanishing} implies
		that for $d\ge g+(2g-1)(N-r)$, the 1-pointed genus zero invariants
		for all non-empty partitions in the expansion vanish. 
		When $\lambda = (g)^r$, the 1-pointed invariant $\langle 1 \rangle_{0,d}^{\quot} = 1$
		by Corollary\ref{cor:G-lambda-equals-1}, completing the proof.
	\end{proof}

	Now we state the following stronger vanishing result.
	\begin{theorem}
		\label
		{prop:Vanshing_rparts}
		Suppose $r<N$ and $d>0$. For any partition $\lambda=(\lambda_{1},\dots,\lambda_{r})$,
		we have
		\begin{align*}
			\langle
			\bS^{\lambda}(
			(\C^r)^\vee
			)
			\rangle
			^{\quot,T}_{0,d}
			=
			\chi^{T}(\quot_d(\bP^1,N,r), \bS^\lambda(\cS_p))=0
		\end{align*} 
		if $d\geq \lambda_{1} -2(N-r)$ and one of the following two conditions holds:
		\begin{enumerate}[\normalfont(i)]
			\item $\lambda_r>0$;
			\item $d \geq r$.
		\end{enumerate}
	\end{theorem}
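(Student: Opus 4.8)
The plan is to reduce everything to the bialternant formula of Corollary~\ref{cor:EulerCharShurBundle}. Since $\bS^{\lambda}((\C^r)^\vee)=\bS^{\tilde\lambda}(\C^r)$, taking $\ell=0$ gives
$\langle\bS^\lambda((\C^r)^\vee)\rangle^{\quot,T}_{0,d}=[t^d]\,s_{\tilde\lambda+(d)^r}(z_1,\dots,z_N)$, where $\tilde\lambda+(d)^r=(d-\lambda_r,\dots,d-\lambda_1)$ and the $z_i$ are the roots of \eqref{eq:equivariant-bethe-ansatz}. Proposition~\ref{cor:easy-vanishing} already disposes of the range $d\ge\lambda_1-(N-r)$, so I would assume the \emph{critical range} $\lambda_1-2(N-r)\le d<\lambda_1-(N-r)$. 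Here the $r$-th shifted exponent $d-\lambda_1+(N-r)$ is negative, so $s_{\tilde\lambda+(d)^r}$ is no longer the Schur polynomial of a partition sitting strictly inside a box, and the elementary degree argument of Proposition~\ref{cor:easy-vanishing} no longer applies verbatim.

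The key simplification I would isolate as a lemma is that, in the equivariant setting, the desired vanishing is \emph{equivalent} to a degree bound. By \eqref{eq:elementary-t} the only elementary symmetric function of the roots carrying $t$ is $e_r(z)=e_r(\alpha^{-1})+t$, while $e_j(z)=e_j(\alpha^{-1})$ for $j\ne r$; moreover $e_1(\alpha^{-1}),\dots,e_N(\alpha^{-1})$ are algebraically independent. Writing $s_\mu(z)$ as a polynomial in the $e_j(z)$ and expanding $(e_r(\alpha^{-1})+t)^m$, one gets $[t^d]s_\mu(z)=\sum_{m\ge d}\binom{m}{d}c_m\,e_r(\alpha^{-1})^{m-d}$, whose vanishing in $\Gamma$ forces every $c_m=0$ for $m\ge d$ by algebraic independence of $e_r(\alpha^{-1})$ from the other generators. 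Hence $[t^d]s_\mu(z)=0$ if and only if $\deg_{e_r}s_\mu<d$, and the whole problem becomes: show that the maximal power of $e_r$ in $s_{\tilde\lambda+(d)^r}$ is at most $d-1$.

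Next I would straighten $\tilde\lambda+(d)^r$, padded with $N-r$ zeros to length $N$, using the rules \eqref{eq:straightening-rules}. If two shifted exponents coincide the Schur function vanishes identically, which already settles part of the critical range; otherwise straightening returns $\pm s_\rho$ for a weakly decreasing $\rho$, and clearing its (bounded) negative parts by $s_\rho=e_N^{-c}s_{\nu}$ with $\nu=\rho+(c)^N$, $c=\lambda_1-d-(N-r)$ — a $t$-free operation that preserves $\deg_{e_r}$ — lands on the explicit shape $\nu=(a_1,\dots,a_{r-1},b^{\,N-r},0)$ with $a_i=\lambda_1-\lambda_{r+1-i}-(N-r)$ and $b=\lambda_1-d-(N-r)-1$. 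One checks $b\le N-r-1$ throughout the critical range and $\lambda_1-\lambda_r\ge N-r$ in the surviving sub-case. This makes transparent how the hypotheses enter: condition (i) $\lambda_r>0$ shrinks the largest part $a_1$, while condition (ii) $d\ge r$ shrinks $b$.

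It remains to bound $\deg_{e_r}s_\nu$. Via the dual Jacobi–Trudi determinant $s_\nu=\det(e_{\nu'_i-i+j})$, this equals the largest number of entries equal to $e_r$ occurring in a single non-vanishing term of the determinant — a constrained matching, since the entries $e_{<0}$ and $e_{>N}$ vanish. \textbf{This matching estimate is the main obstacle}: the naive count of $e_r$-cells on distinct rows and columns overshoots, and one must show that completing such a selection to a nonzero permutation forces discarding enough $e_r$-cells to drop the count below $d$, \emph{precisely} when (i) or (ii) holds. I expect to carry this out as two cofactor/Laplace expansions in the style of the proofs of Corollaries~\ref{cor:wedge-powers} and~\ref{cor:symmetric-power}, peeling off the sub-diagonal band carrying $e_r$ and exhibiting a forced zero block in the complementary minor under each hypothesis. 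That the bound is sharp — and hence that the hypotheses are unavoidable — is visible from the boundary family $\lambda=(d+2(N-r),0,\dots,0)$ with $d<r$ (e.g.\ $N=4$, $r=2$, $\lambda=(5,0)$, $d=1$, where $\nu=(3,1,1)$ and $\deg_{e_2}s_\nu=1=d$): there neither condition holds and the invariant is genuinely nonzero.
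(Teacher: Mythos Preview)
Your strategy matches the paper's almost exactly: multiply by a $t$-free power of $e_N=\prod z_i$ to land on a genuine partition $\nu$, then bound $\deg_{e_r}s_\nu$ via the dual Jacobi--Trudi determinant. Your reformulation ``$[t^d]s_\mu=0 \Leftrightarrow \deg_{e_r}s_\mu<d$'' is valid and is precisely the mechanism the paper uses (the paper shifts by the fixed amount $N-r$ rather than your $c=\lambda_1-d-(N-r)$, but this is immaterial).

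The gap is in your explicit shape $\nu=(a_1,\dots,a_{r-1},b^{N-r},0)$: it only holds when exactly \emph{one} of the first $r$ rows drops below the bottom block after straightening, i.e.\ when $d\ge\lambda_2-1$. If for instance $\lambda_1=\lambda_2$, your formula gives $a_{r-1}=-(N-r)<0$, contradicting ``negatives cleared''; and for $r=2$, $N=5$, $\lambda=(7,6)$, $d=1$ one straightens to $\nu=(1,1,1,1,0)$, not to your form. The paper introduces an integer $1\le m\le r$ counting how many rows drop and obtains $r-m$ top parts, then $N-r$ equal middle parts, then $m$ bottom parts; the key structural output is the pair of inequalities $\nu'_a\ge N-m$ and $\nu'_{a+1}\le r-m$ for the conjugate, with $a=N-r-m$. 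With these in hand, your ``main obstacle'' is simpler than a matching estimate: the two inequalities force the first $N-r$ columns of the Jacobi--Trudi matrix to contain no $e_r$ at all. Case~(i) then finishes because $\lambda_r>0$ gives $\nu_1\le d+N-r-1$, leaving at most $d-1$ further columns. For case~(ii) one may take $\lambda_r=0$ (else (i) applies), so $\nu_1=d+N-r$; the same inequalities confine all $e_r$-entries to the $d\times d$ block on rows $a+1,\dots,a+d$ and the last $d$ columns, and $d\ge r$ forces row $a+d+1$ to vanish across the first $N-r$ columns, killing the complementary minor in the Laplace expansion along the last $d$ columns.
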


	\begin{proof}
		Let $z_1,\dots ,z_N$ be the roots of the equation $\prod_{i=1}^{N}(z-\alpha_{i}^{-1})+(-1)^rz^{N-r}t=0$. Note that $
		\prod_{i=1}^{N}z_i
		=
		\prod_{i=1}^{N}\alpha_i^{-1}
		$
		since $r<N$.
		Using Corollary~\ref{cor:EulerCharShurBundle} and the defining equation~\eqref{eq:bialternant-def},
		we obtain,
		\begin{align*}
			\bigg(
			\prod_{i=1}^{N}\alpha_i^{-1}
			\bigg)^{N-r}
			\chi^{T}(\quot_d(\bP^1,N,r), \bS^\lambda(\cS_p))
			&=[t^d]\bigg(\prod_{i=1}^{N}z_i\bigg)^{N-r} s_{
				\tilde{\lambda}+(d)^r
			}(z_1,\dots ,z_N)\\
			&
			=[t^d] s_{
				\tilde{\lambda}+(d)^r+(N-r)^{N}
			}(z_1,\dots ,z_N).	
		\end{align*}
		where $
		\tilde{\lambda}+(d)^r+(N-r)^{N}
		=(d-\lambda_{r}+N-r,\dots, d-\lambda_{1}+N-r,N-r\dots,N-r)$ with $N-r$ repeated $N-r$ times at the end.
		
		Set $\widetilde{\Lambda}:=\tilde{\lambda}+(d)^r+(N-r)^{N}$.
		Note that
		$
		\widetilde{\Lambda}
		$ may not be weakly decreasing.
		To use the second Jacobi--Trudi formula, we need to apply the straightening
		rules~\eqref{eq:straightening-rules} repeatedly to express its Schur
		function in terms of that indexed by a partition.
		Equivalently, we reorder the rows of the determinant in the numerator of
		its bialternant formula (in~\eqref{eq:bialternant-def})
		such that the corresponding exponents (of $z_{i}$) are in decreasing order
		(we may assume that the exponents of all rows are distinct, otherwise the determinant vanishes
		as desired).
		More precisely, let $0\le m \le r$ be the largest index such that the exponent in the $(r-m+1)$-th row is less than the exponent appearing in the last row of the determinant in the bialternant formula for $s_{\widetilde{\Lambda}}(z_1,\dots ,z_N)$.
		Here, we set $m=0$ if the exponent in the $r$-th row, given by $2(N-r)+d-\lambda_{1}$,
		is greater than or equal to that in the last row, given by $N-r$.
		In this case, we have $\lambda_{1}\leq d+N-r$, and the vanishing follows from Proposition~\ref{cor:easy-vanishing}.
		Now we assume $m\geq 1$.
		After reordering the rows by placing the last $N-r$ rows above the $(r-m+1)$-th, we obtain that 
		\[ s_{
			\widetilde{\Lambda}
		}(z_1,\dots ,z_N)=(-1)^{(N-r)m} s_{\nu}(z_1,\dots,z_N), \]
		where $\nu=(\nu_1,\dots,\nu_N)$ is the weakly decreasing sequence given by
		\begin{align*}
			\nu_i=\begin{cases}
				(N-r)+d-\lambda_{r+1-i} &\text{when}\  1\le i\le r-m \\
				N-r-m &\text{when}\ r-m< i\le N-m\\
				2(N-r)+d-\lambda_{N+1-i} &\text{when}\ N-m <i\le N\\
			\end{cases}.
		\end{align*}
		
		\[	\begin{ytableau}
			\none[\nu'_1]&  \none & \none[\cdots] & \none& \none[\nu'_a]&\none&\none[\cdots]&\none\\
			\tikznode{a3}{~}&  ~ & ~ & ~& ~&~&~&\tikznode{a5}{~}\\
			~&~ & ~ & ~& ~&~&~&\tikznode{a6}{~}\\\
			~& ~ & ~ & ~& \tikznode{a1}{~}&\none&\none&\none\\
			~&  ~ & ~ & ~& 	~&\none&\none&\none\\
			~&  ~ & ~ & ~& ~&\none&\none&\none\\
			~& ~ & ~ & ~& \tikznode{a2}{~}&\none&\none&\none\\
			~& ~ & ~ & ~& \none&\none&\none&\none\\
			\tikznode{a4}{~}&  ~ & \none& \none& \none&\none&\none&\none\\
		\end{ytableau}\]
		\tikz[overlay,remember picture]{%
			\draw[decorate,decoration={brace},thick] ([yshift=1mm,xshift=3mm]a1.north east) -- 
			([yshift=0mm,xshift=3mm]a2.south east) node[midway,right]{$N-r$};
			\draw[decorate,decoration={brace},thick] ([yshift=1mm,xshift=-3mm]a4.south west) -- 
			([yshift=0mm,xshift=-3mm]a3.north west) node[midway,left]{$N$};
			\draw[decorate,decoration={brace},thick] ([yshift=1mm,xshift=3mm]a5.north east) -- 
			([yshift=0mm,xshift=3mm]a6.south east) node[midway,right]{$r-m$};
		}
		
		\noindent The sequence $\nu$ is a partition due to the assumption $\lambda_{1}\le d+2(N-r)$.
		Here are a few important properties of the partition $\nu$ that we will use.
		Let $a=N-r-m$ and $\nu'$ denote the partition conjugate to $\nu$. Then 
		\begin{align*}
			\nu_a'\ge N-m
			\quad
			\text{and}
			\quad
			\nu_{a+1}'\le r-m
			.
		\end{align*}
		Moreover, the first part of $\nu$ is given by $$\nu_1=\begin{cases}
			(N-r+d)-\lambda_r &\quad\text{if}\ m<r\\
			N-2r&\quad\text{if}\ m =r
		\end{cases}.$$

		For notational convenience, let $M=\nu_1$.
		As before, we now use the second Jacobi--Trudi formula to express $s_\nu(z_1,\dots,z_N)$
		as the following $M\times M$ determinant
		\begin{equation}
			\label
			{main-theorem-det}
			\begin{vmatrix}
				e_{\nu_1'}& e_{\nu_1'+1}&\cdots&\cdots&e_{\nu'_1+a}& \cdots&e_{\nu'_1+M-1}\\
				e_{\nu'_2-1}&e_{\nu'_2}&\cdots&\cdots&\vdots &&\vdots\\
				\vdots&\vdots&\ddots&\vdots&\vdots &&\vdots\\
				e_{\nu'_a-a+1} & 	e_{\nu'_a-a+2} &\cdots&	e_{\nu'_a}&\vdots && \vdots\\
				\vdots&&&& e_{\nu'_{a+1}}&\cdots &\vdots\\
				\vdots&&&&\vdots& \ddots &\vdots\\
				e_{\nu'_M-M+1}&\cdots&\cdots&\cdots&\cdots& \cdots &e_{\nu'_{M}}\\
			\end{vmatrix}.
		\end{equation}
		Here the elementary symmetric polynomials $e_{j}$'s are given by~\eqref{eq:elementary-t}.

		Suppose condition (i) holds, i.e., $\lambda_r>0$. Then the first part $\nu_1\le N-r+d-1 .$
		We claim that $e_r$ is not present as an entry in the first $N-r$ columns of~\eqref{main-theorem-det},
		and therefore, the highest power of $e_r$ in its expansion is at most  $\nu_1-(N-r)\le d-1$. Thus the above determinant is a polynomial in $t$ of degree at most $d-1$; therefore, we have the vanishing result $[t^d]s_\nu(z_1,\dots,z_N)=0$. 
		
		To see the claim, first note that the first $a$ rows do not contain $e_{r}$ because
		$$\nu'_{a}-a+1\ge r+1.$$
		Furthermore, since $\nu'_{a+1}\le r-m$ or, equivalently, $\nu'_{a+1}+m-1\le r-1$,
		the submatrix determined by the last $M-a$ rows and the first $a+m=N-r$ columns does not contain $e_{r}$ either.
		Hence, the first $N-r$ columns do not contain $e_r$ as an entry.

		Now suppose $\lambda_{r}=0$ but condition (ii) holds, i.e., $d\geq r$.
		In this case, $t^{d}$ can only show up in the determinant~\eqref{main-theorem-det}
		if all the diagonal entries of the $d\times d$ submatrix, lying in the intersection of the
		$(a+1)$-th, $(a+2)$-th, ..., $(a+d)$-th rows and the last $d$ columns, are $e_{r}$.
		Note that there is no $e_{r}$ outside of this submatrix. The assumption $d\geq r$ implies that
		the entries in the intersection of the $(a+d+1)$-th row and the first $N-r$ columns are all zero.
		Hence the complement minor of this $d\times d$ submatrix is zero and, therefore, the vanishing of
		determinant follows from the generalized Laplace expansion along the last $d$ columns.
	\end{proof}

	\subsection{Tautological class}
	In this subsection, we present a formula related to the results in~\cite{OpreaShubham}.
	Let $M\to \mathbb{P}^1$ be a line bundle.
	Let $\pi_{\bP^1}:\Quot\times\bP^{1}\ra\bP^{1}$ be the projection onto the second factor and
	let $\ca{Q}$ be the tautological quotient sheaf on the universal curve $\pi:\Quot\times\bP^{1}\ra\Quot$.
	The Euler characteristics of the exterior powers of the tautological class,
	defined by $$
	M^{[d]}:=
	\pi_{\star} \left(\pi_{\bP^1}^{*} M\cdot \mathcal Q\right),
	$$ were computed in~\cite{OpreaShubham}.
	\begin{lem}
		Let $M=\cO_{\mathbb{P}^1}(m)$. Then over $\quot_{d}(\bb{P}^1,N,r)$, we have
		\[
		\det M^{[d]}=\det(\pi_{\star} \cS^\vee)^{-1}\cdot \det(\cS_p^\vee)^{m+2}.  
		\]
	\end{lem}
	\begin{proof}
		Tensoring the universal short exact sequence~\eqref{eq:universal-seq-quot-scheme}
		with $\O_{\bP^{1}}(m)$, we find
		\[
		\det(\pi_{\star}\ca{Q}(m))
		=
		\det(\pi_{\star}\ca{S}(m))^{-1}.
		\]
		Using the isomorphism $\O_{\bP^{1}}(1)\cong\ca{O}(x)$, we obtain
		\[
		0\ra
		\cS(m-1)
		\ra
		\cS(m)
		\ra
		\cS_p
		\ra
		0.
		\]
		This leads to $\det(\pi_{\star}\ca{S}(m-1))\cdot \det(\cS_{p})=
		\det(\pi_{\star}\ca{S}(m))
		$.
		By induction:
		$$
		\det(\pi_{\star}\ca{S}(m))=\det(\cS_{p})^{m+2}
		\cdot
		\det(\pi_{\star}\ca{S}(-2))
		.
		$$
		Finally, noting that $
		\det(\pi_{\star}\ca{S}(-2))
		=
		\det(\pi_{\star}\ca{S}^{\vee})$ due to the Serre (or Grothendieck) duality,
		we conclude the proof.
	\end{proof}
	In fact, it follows from the results in~\cite{Stromme} that the Picard group of $\Quot$ is generated by $\det(\pi_{\star}\ca{S}^{\vee})$ and $\det(\cS_{p})$. Theorem~\ref{thm:Localization_calculation} enables us to calculate the Euler characteristics of the determinant of the tautological class and its powers, $(\det M^{[d]} )^\ell $, when $-r<\ell\le (N-r)$.
	
	\begin{cor}
		Let $M$ be a line bundle over $\mathbb{P}^1$ of degree $m$.
		If the level $\ell$ satisfies $-r< \ell\leq N-r$,
		then 
		\begin{align*}
			\chi^{T}
			(\quot_{d}(\bb{P}^1,N,r),(\det M^{[d]})^\ell )= [t^d]
			s_{((m+1)\ell+d)^r}(z_1,\dots,z_N),
		\end{align*}
		where
		$z_1,\dots, z_N$ are the roots of~\eqref{eq:equivariant-bethe-ansatz}.
	\end{cor}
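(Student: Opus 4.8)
The plan is to derive this as an immediate consequence of the preceding Lemma and Theorem~\ref{thm:Localization_calculation}; all the real content sits in those two results, so the corollary is essentially a bookkeeping exercise. First I would note that every degree-$m$ line bundle on $\bP^1$ is isomorphic to $\cO_{\bP^1}(m)$, so the Lemma applies and gives $\det M^{[d]}=\det(\pi_{*}\cS^\vee)^{-1}\cdot\det(\cS_p^\vee)^{m+2}$. Raising both sides to the $\ell$-th power yields
\[
(\det M^{[d]})^\ell=\det(\pi_{*}\cS^\vee)^{-\ell}\cdot\det(\cS_p^\vee)^{(m+2)\ell},
\]
which is precisely the type of class — a power of the determinant line bundle $\det(\pi_{*}\cS^\vee)$ times a determinantal (hence Schur) bundle in $\cS_p^\vee$ — computed by Theorem~\ref{thm:Localization_calculation}.

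I would then rewrite the second factor as a Schur bundle: for any integer $a$ the definition of the Schur bundle gives $\bS^{(a)^r}(\ca{V})=\det(\ca{V})^{a}\otimes\bS^{\emptyset}(\ca{V})=\det(\ca{V})^{a}$, so $\det(\cS_p^\vee)^{(m+2)\ell}=\bS^{((m+2)\ell)^r}(\cS_p^\vee)$. Setting $\lambda=((m+2)\ell)^r$, the class above becomes $\det(\pi_{*}\cS^\vee)^{-\ell}\cdot\bS^{\lambda}(\cS_p^\vee)$. Since the hypothesis $-r<\ell\le N-r$ is exactly the level range demanded by Theorem~\ref{thm:Localization_calculation}, I can apply that theorem to obtain
\[
\chi^{T}(\quot_{d}(\bb{P}^1,N,r),(\det M^{[d]})^\ell)=[t^d]\,s_{\lambda+(d-\ell)^r}(z_1,\dots,z_N).
\]

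The last step is the partition arithmetic. Because $\lambda=((m+2)\ell)^r$ is constant, each part of $\lambda+(d-\ell)^r$ equals $(m+2)\ell+(d-\ell)=(m+1)\ell+d$, giving $\lambda+(d-\ell)^r=((m+1)\ell+d)^r$ and hence the claimed formula. I do not expect a genuine obstacle here; the only point needing a word of care is that $(m+2)\ell$ may be negative, so $\lambda$ need not be an honest partition. This causes no trouble, since Theorem~\ref{thm:Localization_calculation} is stated for arbitrary weakly decreasing integer sequences and the bialternant~\eqref{eq:bialternant-def} is defined for such sequences, so the computation goes through verbatim.
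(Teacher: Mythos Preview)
Your proposal is correct and follows exactly the route the paper intends: combine the preceding Lemma's identification $\det M^{[d]}=\det(\pi_*\cS^\vee)^{-1}\cdot\det(\cS_p^\vee)^{m+2}$ with Theorem~\ref{thm:Localization_calculation} applied to the rectangular weight $\lambda=((m+2)\ell)^r$, then simplify $(m+2)\ell+(d-\ell)=(m+1)\ell+d$. Your remark that $\lambda$ need only be a weakly decreasing integer sequence (not a partition) is apt and matches the generality in which Theorem~\ref{thm:Localization_calculation} is stated.
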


	\section{Applications to quantum $K$-theory}
	\label
	{sec:applications}

	In this section, we assume $\ell=0$.
	We first develop a bialternant-type formula for the 1-pointed invariants of $\rO_\lambda$.
	Then we recall the quantum reduction morphism defined in~\cite{SinhaZhang},
	linking 1-pointed invariants to the quantum $K$-ring of Grassmannian.
	The final subsection focuses on the rank-2 case ($X=\mathrm{Gr}(2,N)$),
	detailing explicit formulas for quantum $K$-products,
	1-pointed invariants, the quantum reduction map, and both the quantized pairing and its inverse.
	
	\subsection{Invariants of $\rO_\lambda$}
	Recall from Section~\ref{sec:K-theory-Grassmannian}
	that the representation $\rO_\lambda$ is defined for any partition $\lambda$, with its associated $K$-theory
	class being the Schubert structure sheaf $\O_\lambda$ for $\lambda\in\prk$. Using
	Theorem~\ref{thm:Vafa_Int_formula_equivariant},
	we derive a bialternant-type formula for $\langle
	\rO_\lambda
	\rangle^{\quot,T}_{0, d}$.
	
	Before presenting the formula, we introduce a convenient generalization of the
	Schur functions.
	For any tuple of Laurent polynomials $(f_1(z),\dots, f_r(z))$, we define
	\begin{equation}
		\label
		{eq:generalized-schur-function}
		s_{(f_1,\dots, f_r)} (z_1,\dots,z_N)= \frac{1}{\det (z_j^{N-i})}\begin{vmatrix}
			f_1(z_1)z_1^{N-1}&\cdots&f_1(z_N)z_N^{N-1}  \\
			f_2(z_1)z_1^{N-2}&\cdots&f_2(z_N)z_N^{N-2} \\
			\vdots &  & \vdots \\
			f_r(z_1)z_1^{N-r}&\cdots&f_r(z_N)z_N^{N-r} \\
			z_1^{N-r-1}& \cdots &  z_N^{N-r-1}\\
			z_1^{N-r-2}& \cdots &  z_N^{N-r-2}\\
			\vdots & &\vdots \\
			1		&\cdots &1
		\end{vmatrix}.
	\end{equation}
	Note that when $f_i(z)=z^{\lambda_i}$, it recovers the definition of the Schur
	functions in~\eqref{eq:bialternant-def}, i.e., $s_{(z^{\lambda_1},\dots,z^{\lambda_r})}
	=s_{(\lambda_1,\dots,\lambda_r)}$.
	
	\begin{theorem} 
		\label
		{cor:G-lambda}
		For a partition $\lambda=(\lambda_{1},\dots,\lambda_{r})$,
		the 1-pointed invariant is computed by
		\begin{equation*}
			\langle
			\rO_\lambda
			\rangle^{\quot,T}_{0,d}
			=
			\chi^{T}
			\left(
			\Quot,
			\bG^{\lambda}
			(
			\cS_{p}^{\vee}
			)
			\right)
			=[t^d]s_{(f_1,\dots, f_r)} (z_1,\dots,z_N)
		\end{equation*}
		where 
		\[f_i(z)=
		z^d\bigg(1-\frac{1}{z}\bigg)^{\lambda_i+r-i}
		,\]
		and $z_1,z_2,\dots,z_N$ are the roots of:
		\begin{equation}
			\label
			{eq:level-0-bethe-equation_Grothendieck}
			\prod_{i=1}^{N}(z-\alpha_i^{-1})+(-1)^rz^{N-r}t=0
			.
		\end{equation}
		Similarly, the non-equivariant invariant $	\langle
		\rO_\lambda
		\rangle^{\quot}_{0,d}$ is given by the same formula
		with $z_1,z_2,\dots,z_N$ as roots of
		$(z-1)^N+(-1)^rz^{N-r}t=0$.
	\end{theorem}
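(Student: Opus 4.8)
The plan is to apply the equivariant Vafa--Intriligator formula (Theorem~\ref{thm:Vafa_Int_formula_equivariant}) at level $\ell=0$ with $\rV=\rO_\lambda=\bG^\lambda(\C^r)$, and then to reorganize the resulting sum over $r$-element subsets of Bethe roots into a single $N\times N$ determinant, mirroring the proof of Theorem~\ref{thm:Localization_calculation} but with the ordinary Schur function replaced by the generalized Schur function of~\eqref{eq:generalized-schur-function}.

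First I would note that $\wev_p^*(\rO_\lambda)=\bG^\lambda(\cS_p^\vee)$, so the quantity to compute is $\chi^T(\Quot,\bG^\lambda(\cS_p^\vee))=\langle\rO_\lambda\rangle^{\quot,T}_{0,d}$, and that by~\eqref{eq:character-G-functor} the character of $\rV$ is $v(z_1,\dots,z_r)=G_\lambda(1-z_1^{-1},\dots,1-z_r^{-1})$. Inserting the Weyl-type bialternant formula~\eqref{eq:G-lambda-bialternant-formula} with arguments $1-z_j^{-1}$ and simplifying the denominator as in the proof of Lemma~\ref{lem:leading-term-one} (using $\det((1-y_j)^{r-i})=\det(y_j^{i-1})$), I obtain
\[
v(z_I)=\frac{\det\big((1-z_j^{-1})^{\lambda_i+r-i}\,z_j^{\,1-i}\big)_{i,j\in I}}{\det\big(z_j^{\,1-i}\big)_{i,j\in I}},
\]
where $I\subseteq[N]$ indexes the chosen $r$ roots of~\eqref{eq:level-0-bethe-equation_Grothendieck}.

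Next I plug this into Theorem~\ref{thm:Vafa_Int_formula_equivariant} at $\ell=0$. Writing $P'(z_i)=\prod_{j\in[N];\,j\ne i}(z_i-z_j)$ and separating $j\in I$ from $j\in I^c$, the factor $\prod_{i,j\in I;\,i\ne j}(z_i-z_j)$ cancels the ``internal'' Vandermonde in the denominator, leaving the cross factor $\prod_{i\in I,\,j\in I^c}(z_i-z_j)^{-1}$. The key step is to absorb the monomial $\prod_{i\in I}z_i^{N-r+d}$ together with the Vandermonde denominator of $v(z_I)$ into the numerator determinant: the $(i,j)$ entry $(1-z_j^{-1})^{\lambda_i+r-i}z_j^{\,1-i}$ becomes $f_i(z_j)\,z_j^{N-i}$ with $f_i(z)=z^{d}(1-1/z)^{\lambda_i+r-i}$, which is exactly the $i$-th of the top $r$ rows of the determinant in~\eqref{eq:generalized-schur-function}. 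After this rewriting the contribution of $I$ equals $M_I^{\mathrm{top}}\big/\big(\mathrm{Vand}(z_I)\prod_{i\in I,j\in I^c}(z_i-z_j)\big)$, where $M_I^{\mathrm{top}}$ is the top $r\times r$ minor on columns $I$.

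Finally, I would recognize this as the generalized Laplace expansion along the first $r$ rows: the complementary $(N-r)\times(N-r)$ minor is the Vandermonde $\mathrm{Vand}(z_{I^c})$ built from the rows $z^{N-r-1},\dots,1$, and the global normalization $\det(z_j^{N-i})^{-1}$ distributes via the splitting $\det(z_j^{N-i})=\mathrm{sgn}(I)\,\mathrm{Vand}(z_I)\,\mathrm{Vand}(z_{I^c})\prod_{i\in I,j\in I^c}(z_i-z_j)$, so that each subset contributes exactly $\mathrm{sgn}(I)M_I^{\mathrm{top}}\mathrm{Vand}(z_{I^c})/\det(z_j^{N-i})$. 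Summing over all $\binom{N}{r}$ subsets reassembles the full $N\times N$ determinant, giving $\langle\rO_\lambda\rangle^{\quot,T}_{0,d}=[t^d]s_{(f_1,\dots,f_r)}(z_1,\dots,z_N)$; the non-equivariant version follows by setting $\alpha_i\to1$ in~\eqref{eq:equivariant-bethe-ansatz}. I expect the only genuine obstacle to be verifying that the identification of the entry $(1-z_j^{-1})^{\lambda_i+r-i}z_j^{\,1-i}$ with $f_i(z_j)z_j^{N-i}$ is correct after collecting the powers of $z_i$, and that the Laplace sign $\mathrm{sgn}(I)$ matches the sign produced by the Vandermonde splitting; since the row/column structure is identical to that in Theorem~\ref{thm:Localization_calculation}, the same sign cancellation should carry over essentially verbatim.
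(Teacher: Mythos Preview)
Your proposal is correct and follows essentially the same route as the paper: compute the character of $\rO_\lambda$ via the bialternant formula~\eqref{eq:G-lambda-bialternant-formula}, plug it into Theorem~\ref{thm:Vafa_Int_formula_equivariant} at $\ell=0$, absorb the monomial factors so that the numerator entries become $(1-z_j^{-1})^{\lambda_i+r-i}z_j^{N+d-i}=f_i(z_j)z_j^{N-i}$, and then recognize the sum over $r$-subsets as the generalized Laplace expansion of the $N\times N$ determinant defining $s_{(f_1,\dots,f_r)}$. The paper is slightly terser about the Laplace/sign bookkeeping (it simply refers back to the proof of Theorem~\ref{thm:Localization_calculation}), but the argument is the same.
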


	\begin{proof}
		Recall that the character of $\rm{O}_\lambda$ is given by the Grothendieck polynomial
		\begin{align*}
			G_{\lambda}
			(1-x^{-1}_{1},\dots,1- x^{-1}_{r}) &=\frac{ \det((1-x_j^{-1})^{\lambda_i+r-i}x_{j}^{-i+1})_{1\le i,j\le r}}{\det\big((1-x_j^{-1})^{r-i}\big)_{1\le i,j\le r}}\\
			&= \frac{ \det((1-x_j^{-1})^{\lambda_i+r-i}x_{j}^{-i+1})_{1\le i,j\le r}}{\prod_{1\le i<j\le r}(x_j^{-1}-x_{i}^{-1})}\\
			&= \bigg(\prod_{i=1}^{r}x_i\bigg)^{r-1}\frac{ \det((1-x_j^{-1})^{\lambda_i+r-i}x_{j}^{-i+1})_{1\le i,j\le r}}{\prod_{1\le i<j\le r}(x_i-x_{j})}.
		\end{align*}
		Using Theorem~\ref{thm:Vafa_Int_formula_equivariant}, we obtain
		\begin{align*}
			\langle
			\rO_\lambda
			\rangle^{\quot,T}_{0,d}&= [t^d]
			\sum_{z_1,\dots,z_r}
			G_\lambda(1-z_1^{-1},\dots,1-z_r^{-1})\frac{\prod_{i=1}^{r}z_i^{N-r+d}\cdot\prod_{i\neq j}(z_i-z_j)}{\prod_{i=1}^{r}P'(z_i)}\\
			&=[t^d]
			\sum_{z_1,\dots,z_r}
			\frac{\prod_{i=1}^{r}z_i^{N-1+d}\cdot \det((1-z_j^{-1})^{\lambda_i+r-i}z_{j}^{-i+1})_{1\le i,j\le r}\cdot\prod_{i\neq j}(z_i-z_j)}{\prod_{1\le i<j\le r}(z_i-z_{j})\cdot \prod_{i=1}^{r}P'(z_i)}\\
			&=[t^d]
			\sum_{z_1,\dots,z_r}
			\frac{\det((1-z_j^{-1})^{\lambda_i+r-i}z_{j}^{N+d-i})_{1\le i,j\le r}\cdot\prod_{i\neq j}(z_i-z_j)}{\prod_{1\le i<j\le r}(z_i-z_{j})\prod_{i=1}^{r}P'(z_i)},
		\end{align*}
		where $z_1,\dots,z_r$ run over $\binom{N}{r}$ subsets of $N$ distinct roots of \eqref{eq:level-0-bethe-equation_Grothendieck}.
		The theorem now follows by applying the generalized Laplace expansion along the first $r$ rows to the bialternant formula in~\eqref{eq:generalized-schur-function}, as done in the proof of Theorem~\ref{thm:Localization_calculation}.
	\end{proof}
	For a partition $\nu=(\nu_1,\dots,\nu_r)$,
	it follows from Lemma~\ref{lem:leading-term-one} that
	\begin{equation*}
		G_{\lambda}
		(1-x^{-1}_{1},\dots,1- x^{-1}_{r})
		=
		\sum_{\nu\subseteq (\lambda_1)^r} D^{\nu}_{\lambda}\,
		s_{\tilde{\nu}}(x_{1},\dots,x_{r})
	\end{equation*}
	and
	\begin{equation*}
		\rO_\lambda
		=
		\sum_{\nu\subseteq (\lambda_1)^r} D^{\nu}_{\lambda}\,
		\bS^{\tilde{\nu}}(\C^r),	
	\end{equation*}
	where $D^{\nu}_{\lambda}\in\Z$ and $\tilde{\nu}=(-\nu_r,\dots,-\nu_1)$.
	The vanishing results for Schur bundles show that the $K$-theoretic Quot scheme invariant of $\rO_\lambda$
	stabilizes to 1 when the degree is sufficiently large.
	\begin{corollary}
		\label
		{cor:G-lambda-equals-1}
		We have
		\[
		\langle
		\rO_\lambda
		\rangle^{\quot,T}_{0,d}
		=
		\chi^{T}
		\left(
		\Quot,
		\bG^{\lambda}
		(
		\cS_{p}^{\vee}
		)
		\right)
		=1,
		\]
		if one of the following two conditions hold:
		\begin{enumerate}[\normalfont(i)]
			\item $d\geq \lambda_1-(N-r)$;
			\item $d \geq r$ and $\lambda_1\leq 2(N-r)+r$.
		\end{enumerate}
	\end{corollary}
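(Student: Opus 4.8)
The plan is to reduce the statement to the two vanishing theorems already proved, after expanding $\rO_\lambda$ into a $\Z$-linear combination of Schur representations. Recall from the expansion recorded just before the statement (an application of Lemma~\ref{lem:leading-term-one}) that
\[
\rO_\lambda=\sum_{\nu\subseteq(\lambda_1)^r}D^\nu_\lambda\,\bS^{\tilde{\nu}}(\C^r),\qquad D^\emptyset_\lambda=1,
\]
where $\tilde{\nu}=(-\nu_r,\dots,-\nu_1)$ and $\bS^{\tilde{\nu}}(\C^r)=\bS^{\nu}((\C^r)^\vee)$. Since $\wev_p^*$ and $\chi^T$ are additive in $K$-theory, the invariant $\langle-\rangle^{\quot,T}_{0,d}$ is linear in its insertion, so
\[
\langle\rO_\lambda\rangle^{\quot,T}_{0,d}
=\langle 1\rangle^{\quot,T}_{0,d}
+\sum_{\emptyset\neq\nu\subseteq(\lambda_1)^r}D^\nu_\lambda\,\langle\bS^{\nu}((\C^r)^\vee)\rangle^{\quot,T}_{0,d}.
\]
It then suffices to prove that the $\nu=\emptyset$ term equals $1$ and that every term with $\nu\neq\emptyset$ vanishes under either hypothesis (i) or (ii).

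For the constant term, I would compute $\langle 1\rangle^{\quot,T}_{0,d}=\chi^T(\Quot,\O)$ directly from Theorem~\ref{thm:Localization_calculation} with $\lambda=\emptyset$ and $\ell=0$, which gives $[t^d]\,s_{(d)^r}(z_1,\dots,z_N)$. Expanding the rectangular Schur polynomial $s_{(d)^r}$ by the second Jacobi--Trudi formula as the $d\times d$ determinant $\det(e_{r+j-i})_{1\le i,j\le d}$ and using~\eqref{eq:elementary-t}, the variable $t$ appears only in the diagonal entries $e_r=e_r(\alpha_1^{-1},\dots,\alpha_N^{-1})+t$; hence the determinant is a monic degree-$d$ polynomial in $t$, and its $t^d$-coefficient is $1$. (Alternatively, one may note that $\Quot$ is smooth, projective and rational by~\cite{Stromme}, so $\chi^T(\O)=1$.)

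For the remaining terms, fix a nonempty $\nu\subseteq(\lambda_1)^r$, so that $\nu_1\le\lambda_1$. Under hypothesis (i) we have $d\ge\lambda_1-(N-r)\ge\nu_1-(N-r)$, and Proposition~\ref{cor:easy-vanishing} gives $\langle\bS^{\nu}((\C^r)^\vee)\rangle^{\quot,T}_{0,d}=0$. Under hypothesis (ii) the bound $\lambda_1\le 2(N-r)+r$ yields $\nu_1-2(N-r)\le\lambda_1-2(N-r)\le r\le d$, and together with $d\ge r>0$ and $r<N$, Theorem~\ref{prop:Vanshing_rparts}(ii) again forces the vanishing. In both cases all summands with $\nu\neq\emptyset$ drop out, leaving $\langle\rO_\lambda\rangle^{\quot,T}_{0,d}=1$.

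The only structurally new input beyond the cited vanishing results is the base case $\langle 1\rangle=1$; everything else is a bookkeeping check that the containment $\nu\subseteq(\lambda_1)^r$ converts the hypotheses on $\lambda_1$ into exactly the degree inequalities required by Proposition~\ref{cor:easy-vanishing} and Theorem~\ref{prop:Vanshing_rparts}. The main point to be careful about is that these inequalities hold \emph{uniformly} for every $\nu$ in the summation range, which is guaranteed precisely by $\nu_1\le\lambda_1$.
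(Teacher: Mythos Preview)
Your proof is correct and follows essentially the same approach as the paper: expand $\rO_\lambda$ into Schur pieces via Lemma~\ref{lem:leading-term-one}, observe that the $\nu=\emptyset$ term contributes $\chi^T(\Quot,\O)=1$, and kill the remaining terms with Proposition~\ref{cor:easy-vanishing} for case (i) and Theorem~\ref{prop:Vanshing_rparts}(ii) for case (ii). Your direct Jacobi--Trudi computation of $[t^d]s_{(d)^r}=1$ is a clean alternative to the paper's appeal to rationality (or to Corollary~\ref{cor:wedge-powers} with $m=0$), but the overall structure is the same.
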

	\begin{proof}
		By Lemma~\ref{lem:leading-term-one}, we have 
		\[
		\bG^{\lambda}(\cS_{x}^{\vee})
		=
		\ca{O}_{\Quot}
		+
		\sum_{\nu\in \mathrm{P}_{r,d+N-r};\nu\neq\emptyset} D^{\nu}_\lambda
		\,\bS^{\nu}(\cS_{p})
		.
		\]
		It follows that
		\begin{align*}
			\chi
			\left(
			\Quot,
			\bG^{\lambda}
			(
			\cS_{p}^{\vee}
			)
			\right)
			=&
			\chi
			\left(
			\Quot,
			\O_{\Quot}
			\right)\\
			&+
			\sum_{\substack{\nu\in \mathrm{P}_{r,d+N-r}\\ \nu\neq\emptyset}} D^{\nu}_\lambda
			\chi
			\left(
			\Quot,
			\bS^{\nu}
			(
			\cS_{p}
			)
			\right).
		\end{align*}
		Note that the first term on the right-hand side equals 1;
		we can prove it by either using the fact that the Quot scheme is rational (c.f.~\cite[Remark (3.4)]{Stromme})
		or by setting $m=0$ in Corollary~\ref{cor:wedge-powers}.
		We conclude the proof by noting that the remaining terms in the summation vanish
		by Proposition~\ref{cor:easy-vanishing} for case (i)
		and Theorem~\ref{prop:Vanshing_rparts} for case (ii).
	\end{proof}

	For any partition $\lambda$ with $r$ parts, let
	\[
	\langle\!\langle
	\rO_\lambda
	\rangle\!\rangle
	^{\quot,T}_{0}
	=
	\sum_{d=0}^{\infty}	q^d\chi^{T}
	\left(\quot_d(\bP^1,N,r), 
	\bG^{\lambda}(\cS_p^{\vee})
	\right)
	\]
	be the correlation function of $\rO_\lambda$.
	Here $q$ is the Novikov variable. We prove some explicit formulas for the above generating function for $\mathrm{Gr}(2,N)$ in Proposition~\ref{prop:rank2_1-pointed_invariants}. The stabilization results from Corollary~\ref{cor:G-lambda-equals-1}
	yield the following descriptions of this correlation function.
	\begin{cor}
		\label
		{rem:G-correlation-function}
		\begin{enumerate}[\normalfont(i)]
			\item
			For any partition $\lambda$, we have
			\[
			\langle\!\langle
			\rO_\lambda
			\rangle\!\rangle
			^{\quot,T}_{0}
			=
			P_\lambda(q)
			+
			\frac{q^{d_{\lambda}}}{1-q},
			\]
			where $d_{\lambda}=\max\{0, \lambda_{1}-k \}$ with $k=N-r$ and
			$P_{\lambda}(q)$ is a polynomial in $q$ of degree less than $d_{\lambda}$.
			In particular, we have
			\[
			\langle\!\langle
			\rO_\lambda
			\rangle\!\rangle
			^{\quot,T}_{0}
			=\frac{1}{1-q}
			\]
			for any $\lambda\in\P_{r,k}$.
			
			\item For any partition $\lambda\in \P_{r,2(N-r)+r}$, we have 
			\begin{equation*}
				\langle\!\langle
				\rO_{\lambda}
				\rangle\!\rangle^{\quot, T}_{0}
				=
				\tilde{P}_{\lambda}(q)+
				\frac{q^{r}}{1-q},
			\end{equation*} 
			where $\tilde{P}_\lambda(q)$ is a polynomial in $q$ of degree $\leq r-1$.
		\end{enumerate}  
	\end{cor}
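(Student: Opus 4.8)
The plan is to reduce both statements directly to the stabilization result of Corollary~\ref{cor:G-lambda-equals-1}, treating the correlation function as a power series whose coefficients become eventually constant. Write $c_d := \chi^{T}(\quot_d(\bP^1,N,r), \bG^{\lambda}(\cS_p^{\vee}))$, so that $\langle\!\langle \rO_\lambda \rangle\!\rangle^{\quot,T}_0 = \sum_{d\ge 0} q^d c_d$. The key observation is that Corollary~\ref{cor:G-lambda-equals-1} pins down exactly when $c_d = 1$: in case (i) whenever $d \ge \lambda_1 - (N-r)$, and in case (ii) (under the hypothesis $\lambda_1 \le 2(N-r)+r$) whenever $d \ge r$. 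Once these thresholds are in hand, the only remaining task is to separate the eventually-constant tail from the initial finite segment.

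For part (i), set $d_\lambda = \max\{0, \lambda_1 - k\}$ with $k = N-r$. By Corollary~\ref{cor:G-lambda-equals-1}(i) we have $c_d = 1$ for all $d \ge d_\lambda$ (note that when $\lambda_1 \le k$ the threshold $\lambda_1-(N-r)$ is nonpositive, so every $c_d$ equals $1$). I would then split the series as
\[
\sum_{d\ge 0} q^d c_d = \sum_{d=0}^{d_\lambda - 1} q^d c_d + \sum_{d \ge d_\lambda} q^d = P_\lambda(q) + \frac{q^{d_\lambda}}{1-q},
\]
where $P_\lambda(q) := \sum_{d=0}^{d_\lambda-1} q^d c_d$ is by construction a polynomial of degree at most $d_\lambda - 1$, hence of degree strictly less than $d_\lambda$, and is the empty sum (equal to $0$) when $d_\lambda = 0$. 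The ``in particular'' assertion is precisely this degenerate case: for $\lambda \in \P_{r,k}$ the inequality $\lambda_1 \le k$ forces $d_\lambda = 0$, so $P_\lambda = 0$ and the series collapses to $q^0/(1-q) = 1/(1-q)$.

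For part (ii), the hypothesis $\lambda \in \P_{r, 2(N-r)+r}$ is exactly $\lambda_1 \le 2(N-r)+r$, so Corollary~\ref{cor:G-lambda-equals-1}(ii) applies and yields $c_d = 1$ for all $d \ge r$. The identical splitting
\[
\sum_{d\ge 0} q^d c_d = \sum_{d=0}^{r-1} q^d c_d + \sum_{d \ge r} q^d = \tilde{P}_\lambda(q) + \frac{q^r}{1-q},
\]
with $\tilde{P}_\lambda(q) := \sum_{d=0}^{r-1} q^d c_d$ a polynomial of degree $\le r-1$, then finishes the argument.

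The substantive content has already been absorbed into Corollary~\ref{cor:G-lambda-equals-1}, so there is no genuine analytic obstacle here; the remaining work is purely the bookkeeping of writing the stabilized tail as a geometric series. The only points demanding a little care are the degenerate case $d_\lambda = 0$, where $P_\lambda$ must be read as the empty sum, and matching the summation index of the tail with the exponent in $q^{d_\lambda}/(1-q)$, respectively $q^r/(1-q)$; both are immediate once the stabilization threshold from the corollary is identified.
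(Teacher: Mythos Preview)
Your proposal is correct and follows exactly the approach the paper intends: the paper states that this corollary is a direct consequence of the stabilization results in Corollary~\ref{cor:G-lambda-equals-1}, and your argument spells out precisely that deduction by splitting the generating series at the stabilization threshold.
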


	Let 
	$\langle\!\langle
	\rO_{\lambda}
	\rangle\!\rangle^{\quot}_{0}$ be the non-equivariant correlation function, i.e., the
	non-equivariant limit of $\langle\!\langle
	\rO_{\lambda}
	\rangle\!\rangle^{\quot, T}_{0}$.
	Theorem~\ref{cor:G-lambda} implies a periodic behavior of the 1-pointed invariants,
	which is very useful in computations.	
	\begin{cor}
		\label
		{cor:periodic_Grotendieck_1-point_invariants}
		For any partition $\lambda=(\lambda_{1},\dots,\lambda_{r})$ with $\lambda_{r}\ge N$, 
		\[	\langle\!\langle
		\rO_{\lambda+(N)^{r}}	\rangle\!\rangle^{\quot}_{0}= q^r 	\langle\!\langle
		\rO_{\lambda}
		\rangle\!\rangle^{\quot}_{0}.\]
		Here $\lambda+(N)^r:=(\lambda_1+N,\dots,\lambda_r+N)$.
	\end{cor}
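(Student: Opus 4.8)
The plan is to prove the periodicity at the level of each degree coefficient, working directly with the bialternant formula of Theorem~\ref{cor:G-lambda} and exploiting the defining equation~\eqref{eq:level-0-bethe-equation_Grothendieck}. Write $d'=d-r$. For the partition $\lambda+(N)^r$, whose $i$-th part is $\lambda_i+N$, Theorem~\ref{cor:G-lambda} gives
\[
\langle \rO_{\lambda+(N)^r}\rangle^{\quot}_{0,d}=[t^d]\,s_{(f_1,\dots,f_r)}(z_1,\dots,z_N),\qquad f_i(z)=z^{d}\Big(1-\tfrac1z\Big)^{\lambda_i+N+r-i},
\]
where $z_1,\dots,z_N$ are the roots of $(z-1)^N+(-1)^rz^{N-r}t=0$. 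First I would split off the factor $\big(1-\tfrac1z\big)^{N}$, writing $f_i(z)=\big(1-\tfrac1z\big)^{N}\cdot z^{d}\big(1-\tfrac1z\big)^{\lambda_i+r-i}$, which isolates precisely the data defining $\rO_\lambda$ in degree $d'$.

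The key observation is that on the root locus this extra factor is almost trivial. From $(z_j-1)^N=(-1)^{r+1}z_j^{N-r}t$ one gets
\[
\Big(1-\tfrac1{z_j}\Big)^{N}=\frac{(z_j-1)^N}{z_j^N}=(-1)^{r+1}\,t\,z_j^{-r}.
\]
Substituting this into the numerator determinant of the generalized Schur function~\eqref{eq:generalized-schur-function}, each of the first $r$ rows (whose $(i,j)$ entry is $f_i(z_j)z_j^{N-i}$) is multiplied by the \emph{scalar} $(-1)^{r+1}t$ and simultaneously has its $z^{d}$ replaced by $z^{d'}$, while the remaining $N-r$ rows and the Vandermonde denominator are unchanged. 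By multilinearity in these $r$ rows I pull out the common scalar, acquiring $\big((-1)^{r+1}t\big)^r=t^r$ (the sign is $(-1)^{r(r+1)}=1$), and what remains is exactly the bialternant computing $\rO_\lambda$ in degree $d'$. Thus
\[
s_{(f_1,\dots,f_r)}(z_1,\dots,z_N)=t^r\,s_{(\tilde f_1,\dots,\tilde f_r)}(z_1,\dots,z_N),\qquad \tilde f_i(z)=z^{d'}\Big(1-\tfrac1z\Big)^{\lambda_i+r-i}.
\]

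Finally I extract the coefficient of $t^d$. Both generalized Schur functions are symmetric Laurent polynomials in $z_1,\dots,z_N$, hence honest polynomials in $t$: since $r<N$ we have $e_N(z)=z_1\cdots z_N=1$ and $e_j(z)=\binom Nj$ for $j\neq r$, while $e_r(z)=\binom Nr+t$, so every symmetric Laurent polynomial in the roots is a polynomial in $t$ alone. For $d\ge r$ the displayed identity gives $[t^d]s_{(f_1,\dots,f_r)}=[t^{d'}]s_{(\tilde f_1,\dots,\tilde f_r)}=\langle\rO_\lambda\rangle^{\quot}_{0,d'}$ by Theorem~\ref{cor:G-lambda} in degree $d'$; for $0\le d<r$ the right-hand side $t^r s_{(\tilde f_1,\dots,\tilde f_r)}$ has no term below $t^r$, so $\langle\rO_{\lambda+(N)^r}\rangle^{\quot}_{0,d}=0$. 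Multiplying by $q^d$ and summing over $d\ge0$ yields $\langle\!\langle\rO_{\lambda+(N)^r}\rangle\!\rangle^{\quot}_0=q^r\langle\!\langle\rO_\lambda\rangle\!\rangle^{\quot}_0$. The geometric content is entirely absorbed into Theorem~\ref{cor:G-lambda} and the elementary identity for $\big(1-1/z_j\big)^N$; I expect the only point requiring genuine care to be this last bookkeeping, namely confirming the polynomiality in $t$ (so that the $d<r$ vanishing is real and no spurious negative powers appear) and tracking the sign and degree shift correctly through the multilinear factorization.
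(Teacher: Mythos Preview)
Your proof is correct and follows essentially the same route as the paper's: apply Theorem~\ref{cor:G-lambda}, use the identity $(1-1/z_j)^N=(-1)^{r+1}tz_j^{-r}$ coming from the defining equation, pull $t^r$ out of the first $r$ rows by multilinearity (the sign $(-1)^{r(r+1)}=1$ matches the paper's $(-1)^{r(r-1)}=1$), and then sum over $d$. Your treatment is in fact slightly more careful than the paper's in two respects: you explicitly justify that the generalized Schur expression is a polynomial in $t$ (via $e_N(z)=1$ and $e_j(z)\in\Z[t]$), and you spell out why the $d<r$ terms vanish, whereas the paper simply writes $[t^d](\cdots)=[t^{d-r}](\cdots)$ and leaves the negative-index case implicit.
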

	\begin{proof}
		For any root $z_j$ of the polynomial $(z-1)^N+(-1)^rtz^{N-r}=0$, we have the identity $(1-z_j^{-1})^N=(-1)^{r-1}tz_j^{-r}$. This implies
		\begin{align*}
			[t^d]s_{(z^d(1-\frac{1}{z})^{\lambda_i+N+r-i})_{i=1}^{r}}(\vec{z})
			&=
			[t^{d}](-1)^{r(r-1)} t^r s_{(z^{d-r}(1-\frac{1}{z})^{\lambda_i+r-i})_{i=1}^{r}}({\vec{z}})
			\\
			&=
			[t^{d-r}]s_{(z^{d-r}(1-\frac{1}{z})^{\lambda_i+r-i})_{i=1}^{r}}({\vec{z}})
			.
		\end{align*}
		We conclude the proof by using Theorem~\ref{cor:G-lambda}
		and summing the two sides over all $d$, after multiplying with $q^d$ and $q^{d-r}$, respectively.
	\end{proof}
	
	\begin{rem}
		There is a geometric explanation of the above periodicity. Let $p\in \bP^1$, then
		\[
		\iota_p:  
		\quot_d(\bP^1,N,r)
		\rightarrow
		\quot_{d+r}(\bP^1,N,r)
		\]
		be the embedding of Quot schemes which on closed points is given by
		\[
		[S\hookrightarrow \O^{\oplus N}_{\bP^1}]
		\mapsto
		[S(-p)\hookrightarrow \O^{\oplus N}_{\bP^1}(-p)\hookrightarrow \O_{\bP^1}^{\oplus N}].
		\]
		This exhibits $\quot_d(\bP^1,N,r)$ as the zero locus of a section of $(\ca{S}_p^{\oplus N})^\vee$ on the Quot scheme $\quot_{d+r}(\bP^1,N,r)$ (c.f. the proof of~\cite[Theorem 2]{Marian-Oprea}).
		Hence
		\[
		(\iota_p)_*\O_{\quot_d(\bP^1,N,r)}=
		\lambda_{-1}(\ca{S}_p^{\oplus N})
		\cdot
		\O_{\quot_{d+r}(\bP^1,N,r)}
		=
		(
		\lambda_{-1}(\ca{S}_p))^N
		\cdot
		\O_{\quot_{d+r}(\bP^1,N,r)}.
		\]
		Using the definition of the Grothendieck polynomial~\eqref{eq:G-lambda-bialternant-formula},
		we have
		$$
		G_{\lambda+(N)^r}(x_1,\dots,x_r)
		=(x_1\cdots x_r)^N
		G_{\lambda}(x_1,\dots,x_r),$$
		and therefore,
		\[
		\bG^{\lambda+(N)^r}(\cS_{p}^\vee)
		=
		(\lambda_{-1}(\cS_{p}))^N
		\cdot
		\bG^\lambda(\cS_{p}^\vee)
		.
		\]
		The periodicity in Corollary~\ref{cor:periodic_Grotendieck_1-point_invariants} follows from
		the above two identities and the projection formula along $\iota_p$.

	\end{rem}

	\subsection{Quantum $K$-ring}
	\label
	{subsec:quantum-k-ring}
	For the rest of the paper, we focus on the non-equivariant setting and assume $k=N-r>0$.
	
	Let $X=\grass$. We first recall the definitions of Givental--Lee's quantum $K$-invariants
	and quantum $K$-ring~\cite{WDVV,Lee,Buch-Mihalcea}.
	For $d\in H_{2}(X)$, let $\ms$ be the moduli stack of degree-$d$, genus-$0$, $n$-pointed stable maps into $X$.
	Let $\ev_{i}:\ms\rightarrow X$ denote the
	evaluation map at the $i$-th marking for $1\leq i \leq n$.
	Then the genus-0, $n$-pointed
	quantum $K$-invariants is defined by
	\[
	\langle
	E_{1},
	\dots,
	E_{n}
	\rangle_{0,n,d}
	:=\chi
	\bigg(
	\ms,
	\prod_{i=1}^{n}\ev_{i}^{*}(E_{i})
	\bigg),
	\]
	where $E_{1},\dots,E_{n}\in K(X)$.
	
	Consider the $\mathbb{Z}$-basis of $K(X)$, consisting of Schubert structure sheaves $\{\mathcal{O}_\lambda\}_{\lambda \in \prkk}$.
	The \emph{quantized pairing} $(F_{\alpha,\beta})_{\alpha,\beta\in\prkk}$ with respect to
	the Schubert basis is defined by
	\begin{equation*}
		F_{\alpha,\beta}
		:=
		\langle\!\langle
		\O_{\alpha},
		\O_{\beta}
		\rangle\!\rangle_{0,2}
		=
		\chi
		(X, 
		\O_\alpha
		\cdot 
		\O_{\beta}
		)
		+
		\sum_{d>0}
		q^{d}
		\langle
		\O_\alpha,
		\O_{\beta}
		\rangle_{0,2,d}
		.
	\end{equation*}
	Let $(F^{\alpha,\beta})$ be the matrix inverse to the quantized pairing matrix
	$(F_{\alpha,\beta})$.
	We define the following generating series of the 3-pointed Givental--Lee's invariants
	\begin{equation*}
		F_{\alpha,\beta,\gamma}:
		=
		\langle\!\langle
		\O_{\alpha},
		\O_{\beta},
		\O_{\gamma}
		\rangle\!\rangle_{0,3}
		=
		\sum_{d\geq0}
		q^{d}
		\langle
		\O_{\alpha},
		\O_{\beta},
		\O_{\gamma}
		\rangle_{0,3,d}.
	\end{equation*}
	
	Given three partitions $\lambda,\mu,\nu \in \P_{r,k}$, we define the~\emph{structure constant} by
	\begin{equation*}
		F_{\lambda,\mu}^{\nu}
		:=
		\sum_{\alpha\in\P_{r,k}}
		F^{\nu,\alpha}
		F_{\alpha,\lambda,\mu}.
	\end{equation*}
	It was first shown by Givental~\cite{WDVV} that
	the \emph{quantum $K$-product}
	defined by
	\begin{equation*}
		\O_{\lambda}
		\bullet
		\O_{\mu}
		=\sum_{\nu}
		F_{\lambda,\mu}^{\nu}
		\O_{\nu}
	\end{equation*}
	is \emph{associative};
	this result is a formal consequence of the WDVV equation in quantum $K$-theory.
	The \emph{quantum $K$-ring} of $X$, denoted by $\qk(X)$, is the $\Z[[q]]$-algebra
	$K(X)\otimes_{\Z}\Z[[q]]$ equipped with the quantum $K$-product.
	Givental--Lee showed that the quantum $K$-ring $\qk(X)$ equipped with the
	quantized pairing $F_{\alpha,\beta}$
	is a (formal) commutative \emph{Frobenius algebra} with unit 1;
	see~\cite[Corollary 1]{WDVV}
	and~\cite[Proposition 12 (1)]{Lee}.

	\subsection{Quantum reduction map}
	\label
	{subsec:quantum-reduction-map}
	In Part I~\cite[Section 4]{SinhaZhang}, we show how to recover ring presentations for the quantum $K$-ring of the Grassmannian using the vanishing results in Section~\ref{subsec:vanishing}. To achieve this, we defined the quantum reduction map, which we recall below. In the remainder of the paper, we introduce a strategy for computing the quantum $K$-product of $X$ using the Littlewood--Richardson rule for multiplying Grothendieck polynomials and the 1-pointed invariants in Theorem~\ref{cor:G-lambda}.
	
	\begin{definition}
		\label{def:kappa-map}
		We define the quantum reduction map
		\[
		\kappa: 
		R(\GLr)\otimes_{\Z}\Z[[q]]
		\ra
		\qk(X)
		\]
		as the $\Z[[q]]$-linear extension of the map
		\[
		\rV
		\mapsto
		\sum_{\alpha\in\prkk}
		\langle\!\langle
		\rV
		\cdot
		\rO_{\alpha}^*
		\rangle\!\rangle^{\quot}_{0}
		\O_\alpha,
		\quad
		\rV\in 
		R(\GLr).
		\]
	\end{definition}
	In Part I~\cite[Theorem 1.9]{SinhaZhang}, we prove the following
	\begin{theorem}[\cite{SinhaZhang}]
		The quantum reduction map $\kappa$ is a surjective ring homomorphism such that for
		any $\lambda\in\prkk$, we have 
		\[
		\kappa(\rO_\lambda)=
		\O_{\lambda}	.
		\]
	\end{theorem}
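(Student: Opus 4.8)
The plan is to treat the three assertions separately, getting the explicit identity and surjectivity directly from orthogonality and reserving the real work for multiplicativity. The equality $\kappa(\rO_\lambda)=\O_\lambda$ for $\lambda\in\prkk$ is immediate: unwinding the definition of $\kappa$ and invoking the orthogonality relation \eqref{eq:orthogonality} gives $\kappa(\rO_\lambda)=\sum_{\alpha\in\prkk}\langle\!\langle\rO_\lambda,\rO_\alpha^*\rangle\!\rangle^{\quot}_0\,\O_\alpha=\sum_{\alpha\in\prkk}\delta_{\lambda,\alpha}\,\O_\alpha=\O_\lambda$. Surjectivity then follows at once, since $\{\O_\lambda\}_{\lambda\in\prkk}$ is a $\Z[[q]]$-basis of $\qk(X)$ and every basis element already lies in the image.

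For the ring-homomorphism property I would recast the genus-zero Quot correlator as a $\Z[[q]]$-bilinear form on $A:=R(\GLr)\otimes_{\Z}\Z[[q]]$, namely $(\rV,\mathrm{W})\mapsto\langle\!\langle\rV,\mathrm{W}\rangle\!\rangle^{\quot}_0=\langle\!\langle\rV\cdot\mathrm{W}\rangle\!\rangle^{\quot}_0$, the second equality holding because all insertions are evaluated at the single point $p$. Since $(\rV\mathrm{U},\mathrm{W})=(\rV,\mathrm{U}\mathrm{W})=\langle\!\langle\rV\mathrm{U}\mathrm{W}\rangle\!\rangle^{\quot}_0$, this form is invariant for the commutative ring structure on $A$ (tensor product of representations), so its radical $\mathrm{Rad}$ is an ideal and $A/\mathrm{Rad}$ is a commutative Frobenius algebra over $\Z[[q]]$. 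The strategy is to identify $\kappa$ with the composite $A\twoheadrightarrow A/\mathrm{Rad}\xrightarrow{\ \sim\ }\qk(X)$, where the second arrow is an isomorphism of Frobenius algebras sending the class of $\rO_\lambda$ to $\O_\lambda$ for $\lambda\in\prkk$; as a composite of ring maps, $\kappa$ is then automatically multiplicative. Granting this isomorphism, orthogonality \eqref{eq:orthogonality} shows that $\{\rO_\alpha^*\}_{\alpha\in\prkk}$ descends to the basis dual to $\{\O_\alpha\}$ under the quantized pairing, so the coefficient $\langle\!\langle\rV,\rO_\alpha^*\rangle\!\rangle^{\quot}_0$ in the definition of $\kappa$ is exactly the $\O_\alpha$-coordinate of the image of $\rV$ in $A/\mathrm{Rad}\cong\qk(X)$; hence $\kappa$ coincides with the projection and is a ring map. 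Concretely, this amounts to checking that $\kappa(\rV\mathrm{W})$ and $\kappa(\rV)\bullet\kappa(\mathrm{W})$ pair identically with each $\O_\delta$, which reduces to the completeness relation $\sum_{\beta}\langle\!\langle\mathrm{W},\rO_\beta^*\rangle\!\rangle^{\quot}_0\,\langle\!\langle\rV\rO_\delta,\rO_\beta\rangle\!\rangle^{\quot}_0=\langle\!\langle\rV\mathrm{W}\rO_\delta\rangle\!\rangle^{\quot}_0$ for dual bases $\{\rO_\beta\}$, $\{\rO_\beta^*\}$ of $A/\mathrm{Rad}$.

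The main obstacle is the Frobenius-algebra isomorphism $A/\mathrm{Rad}\cong\qk(X)$ itself, which has two ingredients. First, one must match the genus-zero Quot correlators with the Givental–Lee quantum $K$-invariants on Schubert classes, i.e.\ $\langle\!\langle\rO_\alpha,\rO_\beta\rangle\!\rangle^{\quot}_0=F_{\alpha,\beta}$ and $\langle\!\langle\rO_\lambda,\rO_\mu,\rO_\nu\rangle\!\rangle^{\quot}_0=F_{\lambda,\mu,\nu}$ for indices in $\prkk$, so that the pairing and triple product transport correctly. Second, and most delicately, one must show that $A/\mathrm{Rad}$ is free of rank $\binom{N}{r}$ with $\{\rO_\lambda\}_{\lambda\in\prkk}$ descending to a basis, which is precisely where the vanishing results of Section~\ref{subsec:vanishing} enter: they force the high-degree Quot invariants of $\rO_\lambda$ to stabilize (Corollary~\ref{cor:G-lambda-equals-1}), collapsing the infinite-rank ring $A$ onto a finite Frobenius quotient and guaranteeing that the quantum product closes up. This comparison and finiteness constitute the substance of Part~I; once they are in place, the Frobenius-algebra manipulations above deliver the ring-homomorphism property formally.
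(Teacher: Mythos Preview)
This theorem is not proved in the present paper at all; it is quoted verbatim from Part~I \cite[Theorem~1.9]{SinhaZhang}, so there is no in-paper proof to compare your proposal against. Your treatment of the first two assertions is correct and is exactly how one extracts them from the orthogonality relation~\eqref{eq:orthogonality} (itself a Part~I result): plugging $\rV=\rO_\lambda$ into Definition~\ref{def:kappa-map} and using $\langle\!\langle\rO_\lambda,\rO_\alpha^*\rangle\!\rangle^{\quot}_0=\delta_{\lambda,\alpha}$ gives $\kappa(\rO_\lambda)=\O_\lambda$, and surjectivity follows since the $\O_\lambda$ span.

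Your Frobenius/radical framing for multiplicativity is a reasonable way to organise the argument, and you have correctly isolated the two genuine inputs that must come from Part~I: the comparison $\langle\!\langle\rO_\alpha,\rO_\beta\rangle\!\rangle^{\quot}_0=F_{\alpha,\beta}$, $\langle\!\langle\rO_\lambda,\rO_\mu,\rO_\nu\rangle\!\rangle^{\quot}_0=F_{\lambda,\mu,\nu}$ between Quot-scheme and Givental--Lee correlators, and the statement that $A/\mathrm{Rad}$ is free of rank $\binom{N}{r}$ with the $\rO_\lambda$ descending to a basis. One caution on logical order: the ``completeness relation'' you display is precisely the genus-zero TQFT splitting axiom for Quot invariants, and in Part~I this is established \emph{directly} (by degenerating the domain curve) rather than deduced from an abstract Frobenius structure on $A/\mathrm{Rad}$; indeed, it is this splitting axiom together with the quasimap/stable-map comparison that \emph{produces} the finite-rank Frobenius quotient, not the other way around. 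So your sketch has the right pieces but slightly inverts cause and effect relative to the actual argument in \cite{SinhaZhang}.
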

	The above theorem allows us to study the quantum $K$-ring through the representation ring of $\GLr$
	and the quantum reduction map $\kappa$.
	According to~\cite{Buch}, there is a $K$-theoretic Littlewood--Richardson rule:
	\begin{equation}
		\label
		{eq:G-LR-rule}
		\rO_\lambda
		\cdot
		\rO_\mu
		=
		\sum_{\nu\in \P_{r,2k}}
		c_{\lambda,\mu}^{\nu}
		\rm{O}_\nu,
	\end{equation}
	where the coefficient $c_{\lambda,\mu}^{\nu}
	$ is equal to $(-1)^{|\nu|-|\lambda|-|\mu|}$ times the number of set-valued tableaux $T$
	of shape $\lambda*\mu$ with content $\nu$ such that $w(T)$ is a reverse lattice word.
	The definitions and notation are detailed in~\cite{Buch}, and the Littlewood--Richardson
	rule for the case $r=2$ is explicitly presented in Section~\ref{subsec:rank-2-case}
	(see~\eqref{eq:rank_2_product_grothendieck_functor}).

	Applying the quantum reduction map $\kappa$
	to the Littlewood--Richardson rule~\eqref{eq:G-LR-rule},
	we obtain
	\[
	\O_{\lambda}
	\bullet
	\O_{\mu}
	=\kappa(\rm{O}_\lambda\cdot \rm{O}_{\mu})=
	\sum_{\nu\in \P_{r,k}}
	c_{\lambda,\mu}^{\nu}
	\O_{\nu}
	+
	\sum_{\substack{\nu\in\P_{r,2k}
			\\
			\nu\notin \P_{r,k}}}
	c_{\lambda,\mu}^{\nu}
	\kappa(
	\rO_\nu
	)
	\]
	for $\lambda,\mu\in \P_{r,k}$.
	This shows that the quantum multiplications of
	Schubert structure sheaves can be fully determined using
	Littlewood--Richardson coefficients and
	the images of
	$\rO_\nu,\nu\in \P_{r,2k}$
	under $\kappa$.
	Employing this approach,
	we deduce a close formula for the structure constants of
	$\qk(\mathrm{Gr}(2,N))$ in Section~\ref{subsec:rank-2-case},
	aligning with results for $\qk(\mathrm{Gr}(2,4))$ and $\qk(\mathrm{Gr}(2,5))$ in~\cite[Example 5.9]{Buch-Mihalcea}
	and~\cite[Appendix C]{Ueda-Yoshida} respectively.
	Complete multiplication tables for
	$\qk(\mathrm{Gr}(2,6))$ and $\qk(\mathrm{Gr}(3,6))$
	are provided in Appendix~\ref{subsec:examples}. 
	\begin{rem}
		The quantum $K$-product for $\qk(3, N)$ was calculated in \cite{CZJM} using the Seidel representation. A natural question is whether the approach described above can be used to recover the result for $\qk(3, N)$ and to find a general formula for the quantum product in $\qk(r, N)$ for all $r$ and $N$.
	\end{rem}

	\subsection{Quantum $K$-ring of $\mathrm{Gr}(2,N)$}
	\label
	{subsec:rank-2-case}
	In this subsection, we focus on the rank $r=2$ case
	and explicitly calculate the 1-pointed Quot scheme invariants
	of $\rO_\lambda$ using the bialternant formula from Theorem~\ref{cor:G-lambda}.
	Employing the $K$-theoretic Littlewood--Richardson rule and
	the derived formulas for the 1-pointed invariants,
	we fully determine the structure constants for
	the quantum $K$-product of $X=\mathrm{Gr}(2,N)$.
	As a corollary, we re-derive an explicit formula for the product in the quantum cohomology ring of $\mathrm{Gr}(2,N)$.
	Additionally, we detail the quantized
	pairing matrix and its inverse.
	Throughout the discussion, we assume $r=2$ and $k=N-2$.

	As described in~\cite{Buch}, the $K$-theoretic Littlewood--Richardson rule for
	two-variable Grothendieck polynomials states that for any partitions $\lambda$ and $\mu$:
	\[\rO_{\lambda} \cdot \rO_{\mu} =\sum_{\nu}^{} c_{\lambda,\mu}^{\nu}\rO_{\nu} \]
	where $\nu$ runs over partitions with at most two parts, and $c_{\lambda,\mu}^{\nu}$ is $(-1)^{|\nu|-|\lambda|-|\mu|}$ times the number of set-valued tableaux $T$
	of shape $\lambda*\mu$ with content $\nu$ such that the word $w(T)$ of $T$ is a reverse lattice word. 
		\[	\begin{ytableau}
			\none&  \none&\none&  \none&  \none &\none&  \none &\none&  \none &\none& \none[\mu_{1}] &1&\cdots& 1&1&1&\cdots&1&1\\
			\none&  \none&\none&  \none &  \none&\none&  \none &\none&  \none &\none&  \none[\mu_{2}] & 2&\cdots &2\\
			\none[\lambda_{1}]&1&\cdots& 1&1&\cdots& 1 &   
			\substack{1 \text{\ or} \\ 1\ 2}
			&2&\cdots&2&\none& \none\\
			\none[\lambda_{2}]&2&\cdots &2
		\end{ytableau}\]
	
	Consider the shape $\lambda*\mu$ drawn above.
	The semi-standard condition on the set-valued tableaux
	means that the rows are weakly increasing from left to right and the columns are strictly
	increasing from top to bottom.
	This implies that the columns with 2 boxes must contain $\{1\}$ followed by $\{2\}$
	and the set $\{1,2\}$ can occur at most once in the third row.
	The condition on the word $w(T)$ being a reverse lattice word implies that
	each box in the first row of $\lambda*\mu$ contains only the set $\{1\}$
	and the number of the set $\{2\}$ in the third row is at most $$m_{\lambda,\mu}=\min\{\mu_1-\mu_2,\lambda_{1}-\lambda_{2} \}.$$  
	Therefore, we can explicitly write the $K$-theoretic Littlewood--Richardson rule as
	\begin{equation}
		\label
		{eq:rank_2_product_grothendieck_functor}
		\rO_{\lambda} \cdot \rO_{\mu} =\rO_{\lambda+\mu}+ \sum_{i=1}^{m_{\lambda,\mu}} \Big[\rO_{\lambda+\mu+(-i,i)}-\rO_{\lambda+\mu+(-i+1,i)}\Big], 
	\end{equation}
	where $i$ denotes the number of $\{2\}$'s in the third row of $\lambda*\mu$.  
	
	We now present the formula for the quantum product between Schubert structure sheaves.
	For tuples of weakly decreasing integers (not necessarily partitions)
	$\lambda=(\lambda_{1},\lambda_{2})$ and
	$\mu= (\mu_{1},\mu_2)$, we define an element in $K(\grtwo)$ 
	using the Littlewood--Richardson rule~\eqref{eq:rank_2_product_grothendieck_functor}:
	\[  
	\ca{G}_{\lambda,\mu}:= \cO_{\lambda+\mu}+ \sum_{i=1}^{m_{\lambda,\mu}} \Big[\cO_{\lambda+\mu+(-i,i)}-\cO_{\lambda+\mu+(-i+1,i)}\Big],
	\]
	where $m_{\lambda,\mu}:=\min\{\mu_1-\mu_2,\lambda_{1}-\lambda_{2}\}$,
	and $\cO_\nu =0$ for $\nu\notin \P_{2,k}$.
	Note that if $\lambda,\mu\in\P_{r,k}$,
	then $ \cO_{\lambda} \cdot \cO_{\mu}  = \ca{G}_{\lambda,\mu}$ in $K(\grtwo)$.

	\begin{theorem}
		\label
		{thm:rank2_quantum_K_ring}
		Let $\lambda, \mu\in \P_{2,N-2}$ with $\mu_{1}-\mu_{2}\le \lambda_{1}-\lambda_2$.
		Then the product in the quantum $K$-ring $\qk(\mathrm{Gr}(2,N))$ is:
		\begin{equation}
			\label{eq:quantum-LR-rule}
			\cO_{\lambda}\bullet \cO_{\mu}=\ca{G}_{\lambda,\mu}+ \ca{G}_{\tilde{\lambda},\mu}q+\ca{G}_{\lambda-(N,N),\mu}q^2.
		\end{equation}
		where $\tilde{\lambda} = (\lambda_2-1,\lambda_1-N+1)$.
	\end{theorem}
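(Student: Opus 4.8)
The plan is to compute $\cO_\lambda\bullet\cO_\mu$ entirely inside the representation ring, using that the quantum reduction map $\kappa\colon R(\GLr)\otimes\Z[[q]]\to\qk(X)$ is a surjective ring homomorphism with $\kappa(\rO_\alpha)=\cO_\alpha$ for $\alpha\in\prkk$. Since $\cO_\lambda\bullet\cO_\mu=\kappa(\rO_\lambda)\bullet\kappa(\rO_\mu)=\kappa(\rO_\lambda\cdot\rO_\mu)$, I first expand $\rO_\lambda\cdot\rO_\mu$ by the rank-$2$ Littlewood--Richardson rule~\eqref{eq:rank_2_product_grothendieck_functor}, obtaining a signed sum $\sum_\nu c^\nu_{\lambda\mu}\rO_\nu$ with $\nu=(\nu_1,\nu_2)$ and $\nu_1\le\lambda_1+\mu_1\le 2k$ (here $k=N-2$). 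Splitting the sum according to whether $\nu_1\le k$ or $\nu_1>k$, the in-box terms contribute $\sum_{\nu\in\prkk}c^\nu_{\lambda\mu}\cO_\nu=\ca{G}_{\lambda,\mu}$ via $\kappa(\rO_\nu)=\cO_\nu$, so it remains to evaluate $\kappa(\rO_\nu)$ for the out-of-box indices $k<\nu_1\le 2k$.

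The technical heart is a reduction lemma: for any weakly decreasing $\nu$ with $\nu_1>k$,
\[
\kappa(\rO_\nu)=q\,\kappa(\rO_{\widetilde\nu}),\qquad \widetilde\nu=(\nu_2-1,\ \nu_1-k-1),
\]
where $\rO_{\widetilde\nu}$ is the virtual representation whose character is the Grothendieck bialternant $G_{\widetilde\nu}$ of~\eqref{eq:G-lambda-bialternant-formula}. To prove it I would start from the bialternant for $G_\nu(1-z_1^{-1},1-z_2^{-1})$ and invoke the identity $(1-z_j^{-1})^N=-t\,z_j^{-2}$ satisfied by the roots of~\eqref{eq:level-0-bethe-equation_Grothendieck} (established in the proof of Corollary~\ref{cor:periodic_Grotendieck_1-point_invariants}). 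Since $\nu_1+1\ge N$, this rewrites the top row $(1-z_j^{-1})^{\nu_1+1}$ as $-t\,z_j^{-2}(1-z_j^{-1})^{\nu_1-k-1}$; pulling out $-t$, extracting one factor $z_j^{-1}$ from each column, and swapping the two rows turns the numerator into $t\,z_1^{-1}z_2^{-1}$ times the Grothendieck bialternant of $\widetilde\nu$, so that $G_\nu(1-z^{-1})=t\,z_1^{-1}z_2^{-1}\,G_{\widetilde\nu}(1-z^{-1})$ at the roots. Feeding this into the Vafa--Intriligator formula (Theorem~\ref{thm:Vafa_Int_formula_equivariant}, packaged as in Theorem~\ref{cor:G-lambda}) for any insertion $W\in R(\GLr)$, the prefactor $t\,z_1^{-1}z_2^{-1}$ lowers the exponent $z_i^{N-2+d}$ by one and shifts $[t^d]$ to $[t^{d-1}]$, yielding $\langle\rO_\nu\cdot W\rangle^{\quot}_{0,d}=\langle\rO_{\widetilde\nu}\cdot W\rangle^{\quot}_{0,d-1}$ for all $d\ge 1$, while the $d=0$ term vanishes because $\rO_\nu|_X=0$ for $\nu_1>k$. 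Taking $W=\rO^*_\alpha$ and summing against $\cO_\alpha$ gives the displayed reduction, where I use orthogonality~\eqref{eq:orthogonality} to evaluate the in-box part as above.

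Combining these, $\kappa(\rO_\lambda\cdot\rO_\mu)=\ca{G}_{\lambda,\mu}+q\,\kappa\bigl(\sum_{\nu_1>k}c^\nu_{\lambda\mu}\rO_{\widetilde\nu}\bigr)$. The next step is to iterate: the first reflected part $\widetilde\nu_1=\nu_2-1$ can again exceed $k$, so the reduction must be applied once more, and the arithmetic identity $\widetilde{\widetilde\lambda}=(\lambda_1-N,\lambda_2-N)=\lambda-(N,N)$ shows that applying it twice lands on the shift by $(N,N)$, which by Corollary~\ref{cor:periodic_Grotendieck_1-point_invariants} accounts for the $q^2$ level. Matching the reflected terms against the explicit strip of~\eqref{eq:rank_2_product_grothendieck_functor} should show that the once-reduced contribution assembles into $q\,\ca{G}_{\widetilde\lambda,\mu}$ and the twice-reduced one into $q^2\,\ca{G}_{\lambda-(N,N),\mu}$; the recursion then terminates because $\lambda-(N,N)$ has both parts negative, so every index occurring in $\ca{G}_{\lambda-(N,N),\mu}$ has first part $\le(\lambda_1-N)+\mu_1\le N-4<k$ and requires no further reduction. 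Each reduction contributes the factor $+t\,z_1^{-1}z_2^{-1}$, i.e. $+q$, so all three summands carry the stated $+$ signs, and the normalization $\kappa(\rO_\emptyset)=1$ (equivalently the stabilization $\langle\rO_\nu\rangle^{\quot}_{0,d}=1$ of Corollary~\ref{cor:G-lambda-equals-1}) fixes the constant term.

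I expect the main obstacle to be exactly this reassembly. The bialternants $G_{\widetilde\nu}$ attached to non-partition indices $\widetilde\nu$ do \emph{not} obey the simple Schur straightening rules~\eqref{eq:straightening-rules}, because the row factors $(1-x_j)^{i-1}$ in~\eqref{eq:G-lambda-bialternant-formula} break antisymmetry in the naive indices; they must instead be straightened as genuine Grothendieck classes, and one must check that the signed sum $\sum_{\nu_1>k}c^\nu_{\lambda\mu}\rO_{\widetilde\nu}$ collapses, after applying $\kappa$, to $\ca{G}_{\widetilde\lambda,\mu}+q\,\ca{G}_{\lambda-(N,N),\mu}$. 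Because $r=2$ the Littlewood--Richardson rule is fully explicit (a single alternating strip of length $m_{\lambda,\mu}=\min\{\mu_1-\mu_2,\lambda_1-\lambda_2\}$), so this collapse can be carried out by a direct if delicate computation, with the hypothesis $\mu_1-\mu_2\le\lambda_1-\lambda_2$ controlling the strip; the cohomological specialization recorded in Corollary~\ref{cor:cohomological-quantum-LR-rule} furnishes an independent check on the resulting coefficients.
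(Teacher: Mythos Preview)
Your reduction lemma $\kappa(\rO_\nu)=q\,\kappa(\rO_{\widetilde\nu})$ for $\nu_1>k$ is correct and the derivation you sketch (pulling one factor of $(1-z_j^{-1})^N=-tz_j^{-2}$ out of the first row of the Grothendieck bialternant, swapping rows, and reading off the shift in the Vafa--Intriligator formula) works exactly as you describe; the arithmetic $\widetilde{\widetilde\lambda}=\lambda-(N,N)$ is also right. This is a genuinely different and more structural packaging than the paper's route: the paper never isolates such a recursion but instead tabulates the 1-pointed invariants $\langle\!\langle\rO_\lambda\rangle\!\rangle^{\quot}_0$ case-by-case for all $\lambda\in\P_{2,3k+1}$ (Propositions~\ref{prop:rank2_1-pointed_invariants} and~\ref{prop:rank2_1-point_invariants_2k+2to3k}), then uses these to compute $\langle\!\langle\rO_\nu\cdot\rO_\alpha^*\rangle\!\rangle^{\quot}_0$ (Corollary~\ref{cor:rank_2_1-pointed_dual}, Proposition~\ref{cor:rank2_kappa_map}) and hence a closed formula for $\kappa(\rO_\nu)$ (Corollary~\ref{cor:rank-2-kappa-map-formula}). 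Your lemma would replace most of that tabulation in one stroke.

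Where your proposal is incomplete is precisely the reassembly you flag as the main obstacle. After reduction the indices $\widetilde\nu=(\nu_2-1,\nu_1-k-1)$ are generally \emph{not} partitions (whenever $\nu_1-\nu_2>k$, which does occur in the Littlewood--Richardson strip), and the Grothendieck bialternant $G_{\widetilde\nu}$ then has no off-the-shelf straightening rule: you must expand each such $\rO_{\widetilde\nu}$ back into honest $\rO_\mu$'s before you can invoke orthogonality~\eqref{eq:orthogonality} or iterate. This expansion, together with matching the resulting signed sum against $\ca{G}_{\widetilde\lambda,\mu}$, is the actual combinatorial content of the theorem. The paper carries it out by first obtaining the explicit closed form for $\kappa(\rO_\nu)$ in Corollary~\ref{cor:rank-2-kappa-map-formula}, then rewriting the LR rule via the operators $\widetilde\Delta_\nu=\rO_\nu-\rO_{\nu+(0,1)}$ (see~\eqref{eq:rewrite-LR-rule}) and checking the $q^0$, $q^1$, $q^2$ coefficients separately; the $q^1$ part in particular requires a careful case split on whether $\nu_1-\nu_2\le k$, $=k+1$, or $\ge k+2$. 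Your recursion gives a cleaner reason \emph{why} the answer has the three-term shape $\ca{G}_{\lambda,\mu}+q\,\ca{G}_{\widetilde\lambda,\mu}+q^2\ca{G}_{\lambda-(N,N),\mu}$, but turning that heuristic into a proof still demands the same bookkeeping the paper does, and you have not supplied it.
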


	\begin{exm}
		For $N=6$, Theorem~\ref{thm:rank2_quantum_K_ring} implies  $ \O_{2}  \bullet  \O_{4,1}    = \O_{4,3}   + (\O_1 - \O_{2})q $ and $\O_{4,2}    \bullet  \O_{4,2}   = (\O_{3,3} + \O_{4,2} - \O_{4,3})q+ (1- \O_1)q^2 . $
	\end{exm}
	
	The quantum cohomology ring of the Grassmannian is fully determined by its quantum $K$-ring.
	As a corollary, we explicitly compute the product in the quantum cohomology of $X=\mathrm{Gr}(2,N)$.
	Let $X_\lambda$ be the Schubert variety for $\lambda$.
	The quantum cohomology ring $\mathrm{QH}(X)$, as a
	$\Z[q]$-algebra $H^*(X)\otimes_\Z\Z[q]$,
	is defined with the quantum product:
	\[
	[X_\lambda]\star [X_\mu]
	=\sum_{\nu,d\geq 0}
	q^d
	\left\langle
	[X_{\lambda}],
	[X_{\mu}],
	[X_{\nu^\vee}]
	\right\rangle_{0,3,d}^{\rm GW}
	[X_\nu],
	\]
	where $\langle-\rangle_{0,3,d}^{\rm GW}$ denotes the 3-pointed (cohomological) Gromov--Witten invariant.
	Assigning a cohomological degree $N$ to the Novikov variable $q$,
	$\mathrm{QH}(X)$ becomes a graded ring.
	
	By Remark~\cite[Remark 5.2]{Buch-Mihalcea}, $\qk(X)$ has a topological filtration by ideals
	defined by $F_j\qk(X)=\bigoplus_{|\lambda|+Ni\geq j}\Z\cdot q^i\O_\lambda$,
	and the associated graded ring is $\mathrm{QH}(X)$
	after identifying $\O_\lambda$ with $[X_\lambda]$.
	Thus, replacing $\O_\lambda$ with $[X_\lambda]$ in the quantum $K$-Littlewood--Richardson
	rule~\eqref{eq:quantum-LR-rule} and omitting the terms on the right-hand side whose cohomological
	degrees are greater than $|\lambda|+|\mu|$,
	we obtain a formula for the quantum cohomology product that matches the known result of Bertram--Fulton--Ciocan-Fontanine~\cite{Bertram-Ciocan-Fontanine-Fulton} in the rank 2 case.
	\begin{cor}
		\label{cor:cohomological-quantum-LR-rule}
		For $\lambda, \mu\in \P_{2,k}$ with $\mu_{1}-\mu_{2}\le \lambda_{1}-\lambda_2$,
		the product in the quantum cohomology ring $\mathrm{QH}(X)$ is given by
		\begin{equation*}
			[X_{\lambda}]\star [X_{\mu}]
			=
			\overline{\ca{G}}_{\lambda,\mu}
			+ \overline{\ca{G}}_{\tilde{\lambda},\mu}q
			+\overline{\ca{G}}_{\lambda-(N,N),\mu}q^2.
		\end{equation*}
		where $\tilde{\lambda} = (\lambda_2-1,\lambda_1-k-1)$ and
		\[
		\overline{\ca{G}}_{\lambda,\mu}
		:=
		[X_{\lambda+\mu}]
		+\sum_{i=1}^{m_{\lambda,\mu}} 
		[X_{\lambda+\mu+(-i,i)}].
		\]
		
	\end{cor}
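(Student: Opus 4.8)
The plan is to deduce this purely formally from the quantum $K$-Littlewood--Richardson rule of Theorem~\ref{thm:rank2_quantum_K_ring} by passing to the associated graded ring. Recall from~\cite[Remark 5.2]{Buch-Mihalcea} (as recalled just above the statement) that $\mathrm{QH}(X)$ is the associated graded ring of $\qk(X)$ with respect to the topological filtration $F_j\qk(X)=\bigoplus_{|\nu|+Ni\ge j}\Z\cdot q^i\O_\nu$, under the identification $\O_\nu\leftrightarrow[X_\nu]$ and with the Novikov variable $q$ placed in degree $N$. Thus $[X_\lambda]\star[X_\mu]$ is obtained from $\O_\lambda\bullet\O_\mu$ by keeping, in each monomial $q^i\O_\nu$, only those of minimal total degree $|\nu|+Ni$, and discarding the strictly higher-degree (higher-codimension) corrections.

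First I would record the degrees of the three index shapes appearing in~\eqref{eq:quantum-LR-rule}. Since $\tilde\lambda=(\lambda_2-1,\lambda_1-k-1)$ we have $|\tilde\lambda|=|\lambda|-(k+2)=|\lambda|-N$, and clearly $|\lambda-(N,N)|=|\lambda|-2N$. Consequently the leading monomials $\O_{\alpha+\mu}$ of $\ca{G}_{\lambda,\mu}$, of $\ca{G}_{\tilde\lambda,\mu}q$, and of $\ca{G}_{\lambda-(N,N),\mu}q^2$ all sit in the same total degree $|\lambda|+|\mu|$: indeed $(|\lambda|+|\mu|)$, $(|\lambda|-N+|\mu|)+N$, and $(|\lambda|-2N+|\mu|)+2N$ coincide. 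Hence all three terms contribute to the degree-$(|\lambda|+|\mu|)$ part of the product, and none is discarded wholesale.

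Next I would analyze a single block $\ca{G}_{\alpha,\beta}=\O_{\alpha+\beta}+\sum_{i=1}^{m_{\alpha,\beta}}\big[\O_{\alpha+\beta+(-i,i)}-\O_{\alpha+\beta+(-i+1,i)}\big]$. The monomials $\O_{\alpha+\beta}$ and $\O_{\alpha+\beta+(-i,i)}$ have degree $|\alpha|+|\beta|$ (adding $(-i,i)$ preserves the size of the partition), whereas each correction $\O_{\alpha+\beta+(-i+1,i)}$ has degree $|\alpha|+|\beta|+1$. Therefore, passing to the associated graded ring annihilates exactly the $\O_{\alpha+\beta+(-i+1,i)}$ terms and replaces $\ca{G}_{\alpha,\beta}$ by $\overline{\ca{G}}_{\alpha,\beta}=[X_{\alpha+\beta}]+\sum_{i=1}^{m_{\alpha,\beta}}[X_{\alpha+\beta+(-i,i)}]$ after the substitution $\O_\nu\mapsto[X_\nu]$. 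Applying this to each of the three blocks in~\eqref{eq:quantum-LR-rule} yields the claimed formula. The convention $\O_\nu=0$ for $\nu\notin\P_{2,k}$ matches $[X_\nu]=0$ under the identification, so the vanishing of out-of-range terms is consistent on both sides.

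The only point requiring care is the degree bookkeeping across the three $q$-powers: one must verify that multiplication by $q$ and $q^2$ raises the degree by exactly $N$ and $2N$, so that all three blocks land in the single cohomological degree $|\lambda|+|\mu|$ of $[X_\lambda]\star[X_\mu]$, and that within each block the $(-i+1,i)$ corrections are precisely the terms of strictly higher degree. Both are immediate from the computation $|\tilde\lambda|=|\lambda|-N$ and from the fact that $\nu\mapsto\nu+(-i,i)$ preserves $|\nu|$ while $\nu\mapsto\nu+(-i+1,i)$ increases it by one; no genuine geometric input beyond the cited graded structure is needed.
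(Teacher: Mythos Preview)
Your proposal is correct and follows essentially the same approach as the paper: the paper's argument (given in the paragraph immediately preceding the corollary) is simply to pass to the associated graded of $\qk(X)$ via the topological filtration and drop the terms of cohomological degree strictly greater than $|\lambda|+|\mu|$. Your write-up spells out the degree bookkeeping ($|\tilde\lambda|=|\lambda|-N$, $|\lambda-(N,N)|=|\lambda|-2N$, and $|\nu+(-i+1,i)|=|\nu|+1$) that the paper leaves implicit, but the method is the same.
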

	
	\subsection{Proof of Theorem~\ref{thm:rank2_quantum_K_ring}}
	To prove the quantum $K$-Littlewood--Richardson rule, we first compute some 1-pointed correlation
	functions of $\rO_\lambda$'s using Theorem~\ref{cor:G-lambda}.
	Let $z_1,z_2,\dots,z_N$ be the roots of $(z-1)^N+z^{N-2}t=0$.
	Then the elementary symmetric polynomials in $z_i$'s are
	\begin{equation}
		\label
		{eq:elementary-t-rank-2}
		e_{j}(\vec{z})=\begin{cases}
			\binom{N}{j}
			& \quad \text{if}\ j\ne 2
			\\[5pt]
			\binom{N}{2}
			+t& \quad \text{if}\ j= 2
		\end{cases}.
	\end{equation}
	Note that only $e_2$ depends on $t$.
	
	\begin{prop}\label
		{prop:rank2_1-pointed_invariants}
		For any partition $\lambda=(\lambda_{1},\lambda_{2}) \in \P_{2,2k+2}$, 
		\begin{equation*}
			\langle\!\langle
			\rO_{\lambda}
			\rangle\!\rangle^{\quot}_{0}=\begin{cases}
				\frac{1}{1-q} & \lambda_{1}\le k\\[5pt]
				\frac{q^2}{1-q}  & k <\lambda_{1}\le  2k+2, N\le \lambda_{2}\\[5pt]
				\frac{q}{1-q}  &k <\lambda_{1}\le 2k+1, \lambda_{2}<N\\[5pt]
				(\lambda_2-N)q+\frac{q}{1-q}  &\lambda_{1}=2k+2,\lambda_{2}<N
			\end{cases}.
		\end{equation*}
	\end{prop}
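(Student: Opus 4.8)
The plan is to reduce all four cases to Corollary~\ref{rem:G-correlation-function} together with the two lowest coefficients of the correlation function, and then to carry out a single determinant evaluation. First I would dispose of $\lambda_1\le k$: here $\lambda\in\P_{2,k}$, so Corollary~\ref{rem:G-correlation-function}(i) gives $\langle\!\langle\rO_\lambda\rangle\!\rangle^{\quot}_0=\frac{1}{1-q}$ at once. For the three cases with $\lambda_1>k$ I would apply Corollary~\ref{rem:G-correlation-function}(ii), valid for every $\lambda\in\P_{2,2k+2}$, which gives
\[
\langle\!\langle\rO_\lambda\rangle\!\rangle^{\quot}_0=c_0+c_1q+\frac{q^2}{1-q},
\qquad
c_0=\langle\rO_\lambda\rangle^{\quot}_{0,0},\quad c_1=\langle\rO_\lambda\rangle^{\quot}_{0,1},
\]
because the polynomial $\tilde P_\lambda$ there has degree at most $r-1=1$ and $q^2/(1-q)$ contributes nothing in degrees $0$ and $1$. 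This reduces the proposition to evaluating the two integers $c_0,c_1$. The coefficient $c_0$ is the degree-$0$ invariant $\chi(\grass,\rO_\lambda|_{\grass})$, which vanishes since $\lambda_1>k$ forces $\rO_\lambda|_{\grass}=0$ (Section~\ref{sec:K-theory-Grassmannian}). Hence everything comes down to $c_1=\langle\rO_\lambda\rangle^{\quot}_{0,1}$.

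To compute $c_1$ I would specialize Theorem~\ref{cor:G-lambda} to $d=1$. Writing $u=1-z^{-1}$, the defining relation $(z-1)^N+z^{N-2}t=0$ of~\eqref{eq:elementary-t-rank-2} becomes $u^N=-t\,z^{-2}$ on the root locus. In the bialternant~\eqref{eq:generalized-schur-function} the first row is built from $f_1(z)=z\,u^{\lambda_1+1}$ and the second from $f_2(z)=z\,u^{\lambda_2}$. Since $\lambda_1>k$ gives $\lambda_1+1\ge N$, the relation $u^N=-t z^{-2}$ extracts exactly one factor of $t$ from the first row, and it does the same for the second row precisely when $\lambda_2\ge N$. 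Thus, if $\lambda_2\ge N$ both rows carry a factor $t$, so $s_{(f_1,f_2)}$ is divisible by $t^2$, giving $c_1=0$ and $\langle\!\langle\rO_\lambda\rangle\!\rangle^{\quot}_0=q^2/(1-q)$; while if $\lambda_2<N$ only the first row does, so $[t^1]s_{(f_1,f_2)}=-[t^0]s_{(g_1,g_2)}$ with $g_1=z^{-1}u^{a}$, $a:=\lambda_1+1-N$, and $g_2=z\,u^{\lambda_2}$. Here I would use that $e_N(\vec z)=\binom{N}{N}=1$ by~\eqref{eq:elementary-t-rank-2}: every symmetric Laurent polynomial in the $z_i$ is then a polynomial in $t$ whose value at $t=0$ is its evaluation at $z_1=\cdots=z_N=1$. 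Consequently $c_1=-\,s_{(g_1,g_2)}(1,\dots,1)$, a confluent limit of the bialternant.

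The heart of the proof, and the step I expect to be the main obstacle, is to evaluate this confluent limit. Expanding about $z=1$ turns it into $(-1)^{\binom N2}\det(d_{i,j-1})_{1\le i,j\le N}$, where $d_{i,m}=[x^m]\psi_i(1+x)$ with $\psi_1(1+x)=x^{a}(1+x)^{N-2-a}$, $\psi_2(1+x)=x^{\lambda_2}(1+x)^{N-1-\lambda_2}$ and $\psi_i(1+x)=(1+x)^{N-i}$ for $i\ge3$; explicitly $d_{1,m}=\binom{N-2-a}{m-a}$, $d_{2,m}=\binom{N-1-\lambda_2}{m-\lambda_2}$ and $d_{i,m}=\binom{N-i}{m}$. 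The rows with $i\ge3$ vanish in the last two columns $m=N-2,N-1$, so the matrix has a zero $(N-2)\times2$ block, and Laplace expansion along those two columns yields $\det(d_{i,j-1})=\pm\,\det B\cdot\det C$. Here $C$ (rows $i\ge3$, columns $m\le N-3$) becomes lower-unitriangular after reversing its rows, so $\det C=\pm1$, while $B$ is the $2\times2$ block in rows $1,2$ and columns $m=N-2,N-1$. A direct evaluation gives $\det B=1$ when $a\le N-2$, i.e.\ $\lambda_1\le 2k+1$, and $\det B=\lambda_2-N+1$ when $a=N-1$, i.e.\ $\lambda_1=2k+2$.

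Finally I would verify that all accumulated signs collapse: the sign $(-1)$ from $[t^1]=-[t^0]$, the factor $(-1)^{\binom N2}$ from the confluent limit, the Laplace sign, and the row-reversal sign in $\det C$ combine (using that $\binom N2+\binom{N-2}{2}=N^2-3N+3$ is odd for all $N$) to give simply $c_1=\det B$. Substituting $c_1=1$ and $c_1=\lambda_2-N+1$ into $c_1q+\frac{q^2}{1-q}$ produces $\frac{q}{1-q}$ for $k<\lambda_1\le 2k+1$ and $(\lambda_2-N)q+\frac{q}{1-q}$ for $\lambda_1=2k+2$, which are exactly the remaining two cases; this completes the proof.
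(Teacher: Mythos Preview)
Your proof is correct and follows the same overall strategy as the paper: reduce to Corollary~\ref{rem:G-correlation-function} to isolate the coefficients $c_0,c_1$, note $c_0=0$ when $\lambda_1>k$, and compute $c_1$ via Theorem~\ref{cor:G-lambda} together with the relation $(1-z^{-1})^N=-tz^{-2}$ on the root locus. The differences are in how you finish. For the case $\lambda_2\ge N$, the paper invokes the periodicity of Corollary~\ref{cor:periodic_Grotendieck_1-point_invariants} to reduce to the first case, whereas you argue directly that both rows of the bialternant acquire a factor of $t$, forcing $[t^1]=0$; your argument is self-contained and avoids the extra corollary, though you should note explicitly that the residual symmetric Laurent polynomial is a genuine polynomial in $t$ (which follows since $e_N=1$ is $t$-independent). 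For $\lambda_2<N$, both the paper and you arrive at $c_1=-s_{(z^{-1}u^a,\,zu^{\lambda_2})}(1,\dots,1)$, but diverge in evaluating this confluent limit: the paper expands binomially into generalized Schur functions $s_{(-1-a,1-b)}(\vec z)$ and observes that at most three index pairs $(a,b)$ survive by row-matching in the bialternant, while you pass to the Taylor-coefficient determinant $\det(d_{i,j-1})$, exploit the $(N-2)\times 2$ zero block to Laplace-expand, and read off $\det B$. The paper's route is quicker once one sees which terms survive; your block-determinant argument is more mechanical and makes the sign bookkeeping transparent. Either way the answer is $c_1=\det B\in\{1,\lambda_2-N+1\}$, and both approaches are equally valid.
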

	\begin{proof}
		The first case follows from Corollary~\ref{rem:G-correlation-function} (i) and the second
		case can be calculated from the first case using the periodicity in Corollary~\ref{cor:periodic_Grotendieck_1-point_invariants}.
		
		Now we focus on the last two cases.
		Corollary~\ref{rem:G-correlation-function} (ii)
		implies $\langle\rO_{\lambda}\rangle^{\quot}_{0,d}=1$
		for any $d\geq 2$ and $\lambda \in \P_{2,2k+2}$.
		Also note that the degree-0 term $\langle\rO_{\lambda}\rangle^{\quot}_{0,0}$
		vanishes when $\lambda_{1}>k$.
		Hence to finish proving the last two cases, we only to need calculate the degree-1 invariant
		$\langle\rO_{\lambda}\rangle^{\quot}_{0,1}$ for $k<\lambda_{1}\le 2k+2$ and $\lambda_{2}<N$.
		Note that $\lambda_{1}+1\ge N$.
		Thus Theorem~\ref{cor:G-lambda} and the identity $(1-z_j^{-1})^N=-tz_j^{-2}$ implies
		\begin{align*}
			\langle
			\rO_{\lambda}
			\rangle^{\quot}_{0,1}&= [t^1]s_{(z(1-\frac{1}{z})^{\lambda_{1}+1},z(1-\frac{1}{z})^{\lambda_{2}}) }(\vec{z})\\
			&=-[t^0]s_{(z^{-1}(1-\frac{1}{z})^{\lambda_{1}+1-N},z(1-\frac{1}{z})^{\lambda_{2}}) }(\vec{z})\\
			&=-s_{(z^{-1}(1-\frac{1}{z})^{\lambda_{1}+1-N},z(1-\frac{1}{z})^{\lambda_{2}}) }(\vec{z})\big|_{z_1=\cdots = z_N=1}
			.
		\end{align*}
		Using the binomial expansions of $(1-\frac{1}{z})^{\lambda_{1}+1-N}$ and $(1-\frac{1}{z})^{\lambda_{2}}$,
		the above expression equals
		\begin{align*}
			\sum_{a=0}^{\lambda_{1}+1-N}\sum_{b=0}^{\lambda_{2}}\binom{\lambda_{1}+1-N}{a}\binom{\lambda_{2}}{b}(-1)^{a+b+1}s_{(-1-a,1-b)}(\vec{z})\big|_{z_1=\cdots = z_N=1}.
		\end{align*}
		Note that $s_{(-1-a,1-b)}(\vec{z})$ equals zero when $1 \leq a \leq N-2$ or $2 \leq b \leq N-1$
		because the first row, respectively the second row, in the bialternant formula~\eqref{eq:bialternant-def}
		matches with one of the last $N-2$ rows.
		Furthermore, when $(a,b)=(0,1)$, we have $s_{(-1-a,1-b)}(\vec{z})=s_{(-1,0)}=0$ by the straightening rule~\eqref{eq:straightening-rules}.
		Hence there are only three potentially non-zero terms in the above expansion which
		correspond to $(a,b)=(0,0),(N-1,0),(N-1,1)$; the last two cases only occur when
		$\lambda_1=2k+2$.
		
		For $(a,b)=(0,0)$, we have $s_{(-1,1)} = -s_{(0,0)}$ (by straightening).
		This finishes the proof of the third case of the proposition.
		Now suppose $\lambda_1=2k+2$.
		Using the straightening rule for Schur functions, we obtain
		\begin{align*}
			s_{(-N,1-b)}(\vec{z}) &= (-1)^{N-1} s_{(-b,-1,\dots,-1,-1)}(\vec{z})\\
			&=(-1)^{N-1}s_{(1-b)}(\vec{z})\prod_{i=1}^{N}z_i^{-1}.
		\end{align*}
		Therefore, when $\lambda_{1}=2k+2$, we have
		\begin{align*}
			\langle
			\rO_{\lambda}
			\rangle^{\quot}_{0,1}
			&= -s_{(-1,1)}(1,\dots,1)+(-1)^{N}s_{(-N,-1)}(1,\dots,1)+(-1)^{N+1}s_{(-N,0)}(1,\dots,1)
			\\
			&=
			1-s_{(1)}(1,\dots, 1)+\lambda_{2}s_{(0)}(1,\dots ,1)\\
			&= 1-N+\lambda_{2}. 
		\end{align*}
		This concludes the proof of the fourth case.
	\end{proof}
	
	For any partition $\nu$, let $$\Delta_\nu := \rO_{\nu}-\rO_{\nu+(1,0)}.$$
	
	\begin{cor}
		Let $\lambda,\mu\in \P_{2,k}$ such that $\mu_{1}-\mu_{2}\le \lambda_{1}-\lambda_{2}$.
		Then the quantized pairing matrix equals
		\begin{equation*}
			F_{\lambda,\mu} =	\langle\!\langle
			\rO_{\lambda+\mu}
			\rangle\!\rangle^{\quot}_{0}+ \begin{cases}
				1& \lambda_1+\mu_2\le k<\lambda_{1}+\mu_{1}\\
				0& \text{Otherwise}
			\end{cases},
		\end{equation*}
		where the first term is given in Proposition~\ref{prop:rank2_1-pointed_invariants}.
	\end{cor}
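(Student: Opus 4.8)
The plan is to convert the quantized pairing into a one-pointed Quot-scheme correlation function and then collapse the resulting alternating sum using Proposition~\ref{prop:rank2_1-pointed_invariants}. First I would appeal to the comparison established in Part~I~\cite{SinhaZhang}, which identifies the quantized pairing $F_{\lambda,\mu}$ with the two-pointed Quot invariant $\langle\!\langle \rO_\lambda,\rO_\mu\rangle\!\rangle^{\quot}_{0}$; since a two-pointed Quot invariant is by definition the one-pointed invariant of the product, this gives $F_{\lambda,\mu}=\langle\!\langle \rO_\lambda\cdot\rO_\mu\rangle\!\rangle^{\quot}_{0}$. I would then expand the product by the rank-$2$ Littlewood--Richardson rule~\eqref{eq:rank_2_product_grothendieck_functor} (the hypothesis $\mu_1-\mu_2\le\lambda_1-\lambda_2$ forces $m_{\lambda,\mu}=\mu_1-\mu_2$) and use $\Z[[q]]$-linearity of the correlation function to write
\[
F_{\lambda,\mu}=\langle\!\langle \rO_{\lambda+\mu}\rangle\!\rangle^{\quot}_{0}+\sum_{i=1}^{\mu_1-\mu_2}\Big[\langle\!\langle \rO_{\lambda+\mu+(-i,i)}\rangle\!\rangle^{\quot}_{0}-\langle\!\langle \rO_{\lambda+\mu+(-i+1,i)}\rangle\!\rangle^{\quot}_{0}\Big].
\]
Setting $P=\lambda_1+\mu_1$ and $Q=\lambda_2+\mu_2$, I would first check admissibility: all partitions occurring have first part at most $P\le 2k$, so they lie in $\P_{2,2k+2}$, and $\lambda_1-\lambda_2\ge\mu_1-\mu_2$ guarantees each pair $(P-i,Q+i)$, $(P-i+1,Q+i)$ is weakly decreasing.

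The key simplification is that, since every first part is at most $2k<2k+2$, none of the $\lambda_1=2k+2$ cases of Proposition~\ref{prop:rank2_1-pointed_invariants} occur; abbreviating $g(a,b):=\langle\!\langle\rO_{(a,b)}\rangle\!\rangle^{\quot}_{0}$, this leaves only $g(a,b)=\tfrac{1}{1-q}$ for $a\le k$, $g(a,b)=\tfrac{q}{1-q}$ for $a>k,\,b<N$, and $g(a,b)=\tfrac{q^2}{1-q}$ for $a>k,\,b\ge N$. I would observe that each summand $g(P-i,Q+i)-g(P-i+1,Q+i)$ compares two values of $g$ with the \emph{same} second coordinate and first coordinates differing by one, so it vanishes unless the pair straddles the threshold $a=k$, i.e.\ unless $P-i=k$. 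This happens for at most one index, namely $i=P-k$, and only when $1\le P-k\le\mu_1-\mu_2$, which is exactly the condition $\lambda_1+\mu_2\le k<\lambda_1+\mu_1$.

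The crux --- and the only place where the hypothesis $\mu_1-\mu_2\le\lambda_1-\lambda_2$ is genuinely needed --- is to show that at this contributing index the second coordinate satisfies $b=Q+(P-k)<N$, so that the difference equals $g(k,b)-g(k+1,b)=\tfrac{1}{1-q}-\tfrac{q}{1-q}=1$, rather than the value $\tfrac{1}{1-q}-\tfrac{q^2}{1-q}=1+q$ that a crossing into the region $b\ge N$ would produce. Indeed, $i=P-k\le\mu_1-\mu_2$ forces $\lambda_1+\mu_2\le k$, while rewriting the hypothesis as $\lambda_2+\mu_1\le\lambda_1+\mu_2$ gives $\lambda_2+\mu_1\le k$ as well; adding these yields $P+Q\le 2k$, whence $b=P+Q-k\le k<k+2=N$. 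Thus the alternating sum equals $1$ precisely when $\lambda_1+\mu_2\le k<\lambda_1+\mu_1$ and $0$ otherwise, which is the stated correction term.

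I expect the genuinely delicate step to be exactly this exclusion of the $b\ge N$ boundary crossing; everything else is the bookkeeping of a discrete difference along the anti-diagonal $a+b=P+Q$ that telescopes to a single boundary term, together with the routine check that at most one index contributes so that no double counting occurs.
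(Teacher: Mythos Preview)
Your proposal is correct and follows essentially the same route as the paper: identify $F_{\lambda,\mu}=\langle\!\langle \rO_\lambda\cdot\rO_\mu\rangle\!\rangle^{\quot}_0$, expand via the rank-$2$ Littlewood--Richardson rule~\eqref{eq:rank_2_product_grothendieck_functor}, and use Proposition~\ref{prop:rank2_1-pointed_invariants} to see that each bracket $\langle\!\langle \rO_{(P-i,Q+i)}-\rO_{(P-i+1,Q+i)}\rangle\!\rangle^{\quot}_0$ vanishes unless $P-i=k$. The paper packages this last step as the clean lemma $\langle\!\langle \Delta_\nu\rangle\!\rangle^{\quot}_0=\delta_{k,\nu_1}$ for $\nu\in\P_{2,2k}$ (where $\Delta_\nu:=\rO_\nu-\rO_{\nu+(1,0)}$), whereas you argue it inline.

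One minor remark: the step you flag as the ``crux'' --- verifying $b=Q+(P-k)<N$ at the contributing index --- is actually automatic from something you already checked. You noted that $(P-i,Q+i)$ is weakly decreasing for all $i$ in the range; at $i=P-k$ this reads $k\ge Q+P-k$, i.e.\ $Q+i\le k<N$. So the exclusion of the $b\ge N$ region does not require a separate argument using $\lambda_2+\mu_1\le\lambda_1+\mu_2$; it is already forced by the partition condition. The hypothesis $\mu_1-\mu_2\le\lambda_1-\lambda_2$ is genuinely used only to guarantee the partitions in the sum are well-formed (and to fix $m_{\lambda,\mu}=\mu_1-\mu_2$).
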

	
	\begin{proof}
		The formulas of the 1-pointed correlation functions in Proposition~\ref{prop:rank2_1-pointed_invariants}
		imply that for any partition $\nu\in \P_{2,2k}$,
		\begin{equation*}
			\langle\!\langle
			\Delta_\nu
			\rangle\!\rangle^{\quot}_{0}= 
			\delta_{k,\nu_1}.
		\end{equation*}
		Using~\eqref{eq:rank_2_product_grothendieck_functor} in conjunction with the above identity, we obtain
		\[ 	F_{\lambda,\mu} =\langle\!\langle \rO_{\lambda} \cdot \rO_{\mu}\rangle\!\rangle^{\quot}_{0}= 	\langle\!\langle
		\rO_{\lambda+\mu}
		\rangle\!\rangle^{\quot}_{0}+ \sum_{i=1}^{\mu_{1}-\mu_{2}} \delta_{k,\lambda_1+\mu_1-i}.  \]
		Note that the summation equals $1$ if $\lambda_1+\mu_2\le k<\lambda_{1}+\mu_{1}$ and 0 otherwise. 
	\end{proof}

	We follow the strategy outlined in Section~\ref{subsec:quantum-reduction-map}
	and prove Theorem~\ref{thm:rank2_quantum_K_ring} using the quantum reduction
	map $\kappa$.
	To explicitly compute $\kappa(\rO_\lambda)$ for any $\lambda\in\P_{2,2k}$,
	we need the 1-pointed invariants for all partitions in $\P_{2,3k+1}$.
	\begin{prop}\label
		{prop:rank2_1-point_invariants_2k+2to3k}
		For any partition $\lambda\in \P_{2,3k+1}$, and $\lambda\notin \P_{2,2k+2}$,
		\begin{equation}
			\label
			{eq:one-pointed-invariant-3k-1}
			\langle\!\langle
			\rO_{\lambda}
			\rangle\!\rangle^{\quot}_{0}
			=\begin{cases}
				\frac{q^3}{1-q}  & 2k+2<\lambda_{1}\le 3k+1,\lambda_{2}<2N\\[5pt]
				\frac{q^4}{1-q}  & 2k+2<\lambda_{1}\le 3k+1,\lambda_{2}\ge 2N\\
			\end{cases}.
		\end{equation}
	\end{prop}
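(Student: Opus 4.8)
The plan is to reduce to the known values in Proposition~\ref{prop:rank2_1-pointed_invariants} by the periodicity mechanism whenever $\lambda_2\ge N$, and to fall back on a direct bialternant computation when $\lambda_2<N$. Throughout, write $\vec z=(z_1,\dots,z_N)$ for the roots of $(z-1)^N+z^{N-2}t=0$, so that $(1-z_j^{-1})^N=-t\,z_j^{-2}$ and the elementary symmetric functions are as in \eqref{eq:elementary-t-rank-2} (only $e_2(\vec z)$ depends on $t$, linearly). Since $\lambda\in\P_{2,3k+1}\setminus\P_{2,2k+2}$, we have $2N-1=2k+3\le\lambda_1\le 3k+1=3N-5$; in particular $\lambda_1+1\ge 2N$.

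First suppose $\lambda_2\ge N$. Factoring $(1-z_j^{-1})^N=-t z_j^{-2}$ out of each of the two rows of the bialternant in Theorem~\ref{cor:G-lambda} is legitimate because both exponents $\lambda_1+1$ and $\lambda_2$ are at least $N$, and it yields, by the computation in the proof of Corollary~\ref{cor:periodic_Grotendieck_1-point_invariants} (which uses only $\lambda_2\ge N$, not $\lambda_2\ge 2N$),
\[
\langle\!\langle\rO_\lambda\rangle\!\rangle^{\quot}_0=q^2\,\langle\!\langle\rO_\mu\rangle\!\rangle^{\quot}_0,\qquad \mu:=(\lambda_1-N,\ \lambda_2-N).
\]
Here $\mu$ is a partition with $k<\mu_1=\lambda_1-N\le 2k-1$, so $\mu\in\P_{2,2k+2}$ and $\mu_1\le 2k+1$. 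If $\lambda_2\ge 2N$ then $\mu_2\ge N$ and the second case of Proposition~\ref{prop:rank2_1-pointed_invariants} gives $\langle\!\langle\rO_\mu\rangle\!\rangle^{\quot}_0=\frac{q^2}{1-q}$, whence $\langle\!\langle\rO_\lambda\rangle\!\rangle^{\quot}_0=\frac{q^4}{1-q}$; if $N\le\lambda_2<2N$ then $\mu_2<N$ and (since $\mu_1\le 2k-1<2k+1$) the third case gives $\frac{q}{1-q}$, whence $\frac{q^3}{1-q}$. This settles both formulas when $\lambda_2\ge N$.

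It remains to treat $\lambda_2<N$, where the reduction above is unavailable since $\mu_2=\lambda_2-N<0$; I claim the answer is again $\frac{q^3}{1-q}$. Using $\lambda_1+1\ge 2N$, factor $(1-z_j^{-1})^{2N}=\big((1-z_j^{-1})^N\big)^2=t^2 z_j^{-4}$ out of the first row of the bialternant of Theorem~\ref{cor:G-lambda} to obtain
\[
\langle\rO_\lambda\rangle^{\quot}_{0,d}=[t^{d-2}]\,s_{(z^{d-4}(1-1/z)^{b},\ z^{d}(1-1/z)^{\lambda_2})}(\vec z),\qquad b:=\lambda_1+1-2N\in[0,N-4].
\]
Because each $e_j(\vec z)$ is a polynomial in $t$ by \eqref{eq:elementary-t-rank-2}, the right-hand side is a polynomial in $t$, so the invariant vanishes for $d\le 1$; the case $d=0$ is also immediate from $\rO_\lambda|_X=0$ (as $\lambda\notin\P_{2,k}$). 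For the remaining values I expand the factors $(1-1/z)^b$ and $(1-1/z)^{\lambda_2}$ by the binomial theorem, apply the straightening rules \eqref{eq:straightening-rules} to rewrite each resulting generalized Schur function as $\pm$ an honest Schur polynomial, and extract the coefficient of $t^{d-2}$ from the second Jacobi--Trudi determinant via \eqref{eq:elementary-t-rank-2}, exactly as in the proofs of Corollaries~\ref{cor:wedge-powers} and \ref{cor:symmetric-power}. The upshot should be that the $d=2$ coefficient vanishes (the relevant straightened shapes either collide or leave the valid range), while for each $d\ge 3$ a single surviving leading term contributes $1$; summing over $d$ gives $\frac{q^3}{1-q}$.

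The main obstacle is this last step: controlling, for general $\lambda_2<N$, the signed binomial sum of straightened Schur polynomials and verifying that the coefficient of $t^{d-2}$ collapses to exactly $1$ for every $d\ge 3$ and to $0$ at $d=2$. As a consistency check, Corollary~\ref{cor:G-lambda-equals-1}(i) already gives the value $1$ for $d\ge\lambda_1-k$, and Corollary~\ref{rem:G-correlation-function}(i) shows $\langle\!\langle\rO_\lambda\rangle\!\rangle^{\quot}_0=P_\lambda(q)+q^{\lambda_1-k}/(1-q)$ with $\deg P_\lambda<\lambda_1-k$, so the computation only has to pin down the finitely many coefficients $\langle\rO_\lambda\rangle^{\quot}_{0,d}$ for $2\le d<\lambda_1-k$. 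The degenerate case $\lambda_2=0$, where the first slot is the monomial $z^{d-4}$ and straightening yields $\langle\rO_\lambda\rangle^{\quot}_{0,d}=-[t^{d-2}]\,s_{(d-1,\,d-3,\,0,\dots,0)}(\vec z)$ (equal to $1$ for $d\ge 3$), already exhibits the mechanism in clean form and guides the general argument.
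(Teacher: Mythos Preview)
Your reduction for $\lambda_2\ge N$ via periodicity, and your setup for $\lambda_2<N$ (factoring $(1-1/z)^{2N}=t^2z^{-4}$ from the first row), are exactly what the paper does. The gap you flag as ``the main obstacle'' is real but short, and the paper closes it by a clean degree count rather than by any delicate cancellation. Here is the missing step.

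After your row swap $s_{(z^{d-4}(1-1/z)^{b},\,z^{d}(1-1/z)^{\lambda_2})}=-s_{(z^{d-1}(1-1/z)^{\lambda_2},\,z^{d-3}(1-1/z)^{b})}$ (with $b=\lambda_1+1-2N$), binomially expand both slots to obtain
\[
\langle\rO_\lambda\rangle^{\quot}_{0,d}
=[t^{d-2}]\sum_{i=0}^{\lambda_2}\sum_{j=0}^{b}(-1)^{i+j+1}\binom{\lambda_2}{i}\binom{b}{j}\,s_{(d-1-i,\,d-3-j)}(\vec z).
\]
Using $0\le\lambda_2\le N-1$ and $0\le b\le N-4$, any term with $i\ge d+1$ (resp.\ $j\ge d-2$) has its first (resp.\ second) row in the bialternant coinciding with one of the last $N-2$ rows, so it vanishes. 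Thus the sum may be restricted to $0\le i\le d$ and $0\le j\le d-3$; in particular it is empty for $d=2$, giving the degree-$2$ vanishing. For $d\ge 3$ and $(i,j)\ne(0,0)$, the (straightened) partition indexing $s_{(d-1-i,\,d-3-j)}$ has size strictly less than $2(d-2)$; since only $e_2$ depends on $t$ and each $e_2$ contributes degree $2$, the $t$-degree of that Schur polynomial is at most $d-3$, so it does not contribute to $[t^{d-2}]$. The sole remaining term is $-s_{(d-1,\,d-3)}(\vec z)$, whose $e_2^{d-2}$-coefficient is $+1$ (e.g.\ set all $e_i=0$ for $i\ne 2$ and compute $s_{(d-1,d-3)}$ in two variables). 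Hence $\langle\rO_\lambda\rangle^{\quot}_{0,d}=1$ for all $d\ge 3$, and summing gives $q^3/(1-q)$.

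Two small remarks. First, your ``degenerate case $\lambda_2=0$'' actually requires both $\lambda_2=0$ and $b=0$ (i.e.\ $\lambda_1=2N-1$) for the first slot to be the bare monomial $z^{d-4}$; but the general argument above makes this special case unnecessary. Second, the consistency checks you cite from Corollaries~\ref{cor:G-lambda-equals-1} and~\ref{rem:G-correlation-function} are correct but do not by themselves pin down the values at $d=2$ and at $3\le d<\lambda_1-k$; the degree count above is what does the work.
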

	\begin{proof}
		When $\lambda_{2}\ge N$, the proposition follows using periodic property in Corollary~\ref{cor:periodic_Grotendieck_1-point_invariants} and Proposition~\ref{prop:rank2_1-pointed_invariants}.
		Now we focus on the first case of~\eqref{eq:one-pointed-invariant-3k-1} assuming that $\lambda_2\leq N-1$.
		Since $(1-z_j^{-1})^{2N}=t^2z_j^{-4}$, we have
		\begin{align*}
			\langle
			\rO_{\lambda}
			\rangle^{\quot}_{0,d}&= [t^d]s_{(z^d(1-\frac{1}{z})^{\lambda_{1}+1},z^d(1-\frac{1}{z})^{\lambda_{2}}) }(\vec{z})\\
			&=[t^{d-2}]s_{(z^{d-4}(1-\frac{1}{z})^{a},z^d(1-\frac{1}{z})^{b}) }(\vec{z})\\
			&=-[t^{d-2}]s_{(z^{d-1}(1-\frac{1}{z})^{b},z^{d-3}(1-\frac{1}{z})^{a}) }(\vec{z}),
		\end{align*}
		where $(a,b)=(\lambda_{1}+1-2N,\lambda_2)$.
		This implies that the 1-pointed invariant vanishes if $d\leq 1$.
		Suppose $d\geq2 $.
		Note that $0\le a\le N-4$ and $0\le b\le N-1$.
		Using the binomial expansions of $(1-\frac{1}{z})^b$ and $(1-\frac{1}{z})^a$,
		we have
		\begin{align*}
			\langle
			\rO_{\lambda}
			\rangle^{\quot}_{0,d}
			&=[t^{d-2}] 
			\sum_{i=0}^{b}\sum_{j=0}^{a}(-1)^{i+j+1}\binom{b}{i}\binom{a}{j}
			s_{(d-1-i,d-3-j) }(\vec{z})\\
			&=[t^{d-2}] 
			\sum_{i=0}^{d}\sum_{j=0}^{d-3}(-1)^{i+j+1}\binom{b}{i}\binom{a}{j}
			s_{(d-1-i,d-3-j) }(\vec{z})
			.
		\end{align*}
		Here the second equality holds because $s_{(d-1-i,d-3-j) }(\vec{z})=0$
		when $i\geq d+1$ or $j\geq d-2$;
		under either condition, the first row, respectively the second row, in the bialternant formula~\eqref{eq:bialternant-def}
		of $s_{(d-1-i,d-3-j) }(\vec{z})$ matches with one of the last $N-2$ rows.
		
		Since the summation for $j$ is empty when $d=2$, the 1-pointed
		invariant of degree 2 vanishes.
		When $d\ge 3$,
		the term $i=j=0$ equals $-s_{d-1,d-3}=e_{2}^{d-3}(e_2-e_1^2)$ which is a monic
		polynomial in $t$ of degree $d-2$ by~\eqref{eq:elementary-t-rank-2}.
		If either $i$ or $j$ is nonzero,
		the partitions obtained by straightening $(d-1-i,d-3-j)$ is strictly contained
		in the partition $(d-1,d-3)$.
		In particular,
		the degree of the corresponding Schur polynomial is strictly less that
		$2(d-2)$ and, therefore,
		the highest power of $e_2$ (which has degree 2 in $z_i$'s)
		appearing in the Jacobi--Trudi expansion of $s_{(d-1-i,d-3-j) }$ is strictly less than $d-2$. Hence, $	\langle
		\rO_{\lambda}
		\rangle^{\quot}_{0,d}=1$ for all $d\ge 3$.
	\end{proof}
	
	Recall that $\mathrm{S}=(\C^{2})^{\vee}$ denotes the dual of the standard representation of $\mathrm{GL}_2(\C)$.
	\begin{cor}\label{cor:rank_2_1-pointed_dual}
		For any partition $\lambda\in \P_{2,3k}$, 
		\begin{equation*}
			\langle\!\langle
			\rO_{\lambda}
			\cdot
			\det(\rS)
			\rangle\!\rangle^{\quot}_{0}=\begin{cases}
				1 & \lambda=(k,k) \\
				q  & \lambda_1=2k+1,\ \lambda_2\le k+1 \\
				-q  & \lambda_1=2k+2,\ \lambda_2\le k \\
				q^2  & \lambda = (2k+2,2k+2) \\
				0 &\text{otherwise}
			\end{cases}.
		\end{equation*}
	\end{cor}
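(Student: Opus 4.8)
The plan is to reduce everything to the bialternant formula of Theorem~\ref{cor:G-lambda}. Since $\det(\rS)=\det((\C^2)^\vee)$ has character $(x_1x_2)^{-1}$, the character of $\rO_\lambda\cdot\det(\rS)$ at eigenvalues $(z_1,z_2)$ is $G_\lambda(1-z_1^{-1},1-z_2^{-1})\,(z_1z_2)^{-1}$. Running the same manipulation as in the proof of Theorem~\ref{cor:G-lambda} --- substituting this character into Theorem~\ref{thm:Vafa_Int_formula_equivariant}, expanding $G_\lambda$ by its Weyl bialternant, and folding the extra factor $(z_1z_2)^{-1}$ into the columns of the numerator determinant (which lowers the exponent $z^{d}$ to $z^{d-1}$) --- I would obtain
\[
\langle\rO_\lambda\cdot\det(\rS)\rangle^{\quot}_{0,d}=[t^d]\,s_{(f_1,f_2)}(z_1,\dots,z_N),\qquad f_i(z)=z^{d-1}\Big(1-\tfrac1z\Big)^{\lambda_i+2-i},
\]
where $z_1,\dots,z_N$ are the roots of $(z-1)^N+z^{N-2}t=0$. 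This differs from the formula for $\langle\rO_\lambda\rangle^{\quot}_{0,d}$ only by the replacement $z^d\mapsto z^{d-1}$ coming from $\det(\rS)$, so the machinery of the two preceding propositions applies.

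Next I would dispose of the two easy regimes. For $\lambda_1\le k$, i.e. $\lambda\in\P_{2,k}$, I would avoid computation entirely: setting $\nu=\lambda^*\in\P_{2,k}$, one has $\rO_\nu^*=\rO_{\nu^*}\cdot\det(\rS)=\rO_\lambda\cdot\det(\rS)$, so the orthogonality relation~\eqref{eq:orthogonality} applied with first insertion $\rO_\emptyset=1$ gives $\langle\!\langle\rO_\lambda\cdot\det(\rS)\rangle\!\rangle^{\quot}_0=\langle\!\langle\rO_\emptyset,\rO_\nu^*\rangle\!\rangle^{\quot}_0=\delta_{\emptyset,\nu}=\delta_{\lambda,(k,k)}$, which is exactly the first case and the $\lambda_1\le k$ part of the last case. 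For $\lambda_2\ge N$ I would establish the periodicity $\langle\!\langle\rO_{\lambda+(N)^2}\cdot\det(\rS)\rangle\!\rangle^{\quot}_0=q^2\,\langle\!\langle\rO_\lambda\cdot\det(\rS)\rangle\!\rangle^{\quot}_0$ exactly as in Corollary~\ref{cor:periodic_Grotendieck_1-point_invariants}, by applying $(1-z_j^{-1})^N=-t\,z_j^{-2}$ to both rows $f_1,f_2$ (pulling out a factor $t^2$). In particular this reduces $\lambda=(2k+2,2k+2)=(k,k)+(N)^2$ to the already-settled case $\lambda=(k,k)$, producing the value $q^2$ of the fourth case.

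The substantive work is the remaining range $k<\lambda_1\le 3k$ with $\lambda_2\le k+1<N$. Here I would mimic the proofs of Proposition~\ref{prop:rank2_1-pointed_invariants} and Proposition~\ref{prop:rank2_1-point_invariants_2k+2to3k}: write $\lambda_1+1=c_1N+r_1$ with $0\le r_1<N$ (so $c_1\in\{1,2\}$ in this range, while $c_2=0$ since $\lambda_2<N$), use the root identity to pull the factor $(-t)^{c_1}$ out of the first row, thereby replacing $[t^d]$ by $(-1)^{c_1}[t^{\,d-c_1}]$ of a bialternant whose $(1-1/z)$-exponents are now all $<N$. Expanding the two rows by the binomial theorem produces a sum of genuine Schur functions $s_{(p,q)}(z_1,\dots,z_N)$, most of which vanish because, in the determinant~\eqref{eq:bialternant-def}, one of the first two rows coincides with a trailing row $z^{N-3},\dots,1$; the straightening rules~\eqref{eq:straightening-rules} together with $e_N=\prod z_i=1$ then evaluate the few survivors. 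Tracking the $t$-degree through~\eqref{eq:elementary-t-rank-2} shows the generating series is a polynomial in $q$, and a careful count of the surviving binomial terms at $z_1=\dots=z_N=1$ pins down the single nonzero contribution: degree $d=1$ with value $+1$ when $\lambda_1=2k+1,\ \lambda_2\le k+1$, and value $-1$ when $\lambda_1=2k+2,\ \lambda_2\le k$, with everything else cancelling to $0$.

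I expect the main obstacle to be precisely this last bookkeeping at the boundary. The thresholds $r_1=k$ versus $r_1=k+1$ (i.e. $\lambda_1=2k+1$ versus $\lambda_1=2k+2$) and the cutoffs $\lambda_2\le k+1$ versus $\lambda_2\le k$ are governed by exactly which pairs $(a,b)$ in the double binomial expansion escape the repeated-row vanishing and survive the straightening --- a delicate parity-and-range analysis analogous to, but slightly finer than, the three-term analysis with $(a,b)=(0,0),(N-1,0),(N-1,1)$ appearing in the proof of Proposition~\ref{prop:rank2_1-pointed_invariants}. Verifying that all contributions outside the four listed cases cancel exactly, rather than merely estimating that their $t$-degrees are too small, is where the care is needed.
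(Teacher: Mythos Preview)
Your plan is sound and would work, but it takes a different and more laborious route than the paper. The paper does not re-derive a bialternant formula for the twisted invariant at all. Instead it observes the representation-ring identity $\det(\rS)=1-\rO_{(1)}$ (equivalently, the character identity $(z_1z_2)^{-1}=1-G_{(1)}(1-z_1^{-1},1-z_2^{-1})$), and then applies the rank-$2$ Littlewood--Richardson rule~\eqref{eq:rank_2_product_grothendieck_functor} with $\mu=(1)$ to write
\[
\rO_{\lambda}\cdot\det(\rS)=\rO_{\lambda}-\rO_{\lambda+(1,0)}-\bar{\delta}_{\lambda_1,\lambda_2}\big(\rO_{\lambda+(0,1)}-\rO_{\lambda+(1,1)}\big).
\]
This reduces $\langle\!\langle\rO_\lambda\cdot\det(\rS)\rangle\!\rangle^{\quot}_0$ to a signed sum of at most four invariants $\langle\!\langle\rO_\mu\rangle\!\rangle^{\quot}_0$ with $\mu\in\P_{2,3k+1}$, whose values are already tabulated in Propositions~\ref{prop:rank2_1-pointed_invariants} and~\ref{prop:rank2_1-point_invariants_2k+2to3k}. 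The case-split in the corollary then falls out of a direct (if tedious) case analysis of those tables --- no new bialternant computation, no binomial expansion, no survivor-counting.

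Your approach, by contrast, redoes essentially the same computations that went into Propositions~\ref{prop:rank2_1-pointed_invariants} and~\ref{prop:rank2_1-point_invariants_2k+2to3k}, just with $z^d$ shifted to $z^{d-1}$. The orthogonality shortcut for $\lambda_1\le k$ is nice and genuinely saves effort there, and the periodicity argument is identical to the paper's, but in the ``remaining range'' you end up facing exactly the delicate bookkeeping you anticipate --- which the paper avoids entirely by leveraging the already-completed work. One small gap to be aware of in your outline: when $\lambda_2\ge N$ but $\lambda\ne(2k+2,2k+2)$, periodicity may land you back in the range $k<\lambda'_1\le 2k-2$ (not in $\P_{2,k}$), so you still need the remaining-range vanishing for those $\lambda'$; you should make explicit that this is covered.
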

	\begin{proof}
		Using the Littlewood--Richardson rule~\eqref{eq:rank_2_product_grothendieck_functor},
		we obtain
		\begin{align*}
			\rO_{\lambda}
			\cdot
			\det(\rS) = \rO_{\lambda}\cdot(1-\rO_{(1)}) 
			= \rO_{\lambda}-\rO_{\lambda+(1,0)} - \bar{\delta}_{\lambda_{1},\lambda_{2}}\big(\rO_{\lambda+(0,1)}-\rO_{\lambda+(1,1)}\big)\\
		\end{align*}
		Here $\bar{\delta}_{\lambda_{1},\lambda_{2}} := 1-\delta_{\lambda_{1},\lambda_{2}}$.
		The corollary follows by carefully plugging in the formulas for the 1-pointed invariants calculated in Propositions~\ref{prop:rank2_1-pointed_invariants} and~\ref{prop:rank2_1-point_invariants_2k+2to3k}.
		We leave the details to the reader.
	\end{proof}
	 
	Recall from Definition~\ref{def:kappa-map} that for any partition $\nu\in \P_{2,2k}$, \[\kappa(\rO_\nu) = \sum_{\alpha\in \P_{2,k}}^{}\langle\!\langle
	\rO_{\nu}\cdot 	\rO_{\alpha}^*
	\rangle\!\rangle^{\quot}_{0} \cO_\alpha, \]
	where $\langle\!\langle
	\rO_{\nu}\cdot 	\rO_{\alpha}^*
	\rangle\!\rangle^{\quot}_{0}$ is explicitly described in the following corollary.

	\begin{prop}
		\label
		{cor:rank2_kappa_map}
		For any partition $\nu\in \P_{2,2k}$ and $\alpha\in \P_{2,k}$,
		\begin{equation}
			\label
			{eq:rank2_kappa_map}
			\langle\!\langle
			\rO_{\nu}\cdot 	\rO_{\alpha}^*
			\rangle\!\rangle^{\quot}_{0}=\begin{cases}
				1 & \nu=\alpha\\
				q^2& \nu=\alpha+(N,N)\\
				q& \nu +\alpha^*=(2k+1,k+1)\\
				q& \nu_1 = \alpha_1+k+1,\ \nu_2\le \alpha_2\\
				-q&\nu_1 = \alpha_1+k+2,\ \nu_2\le \alpha_2\\
				0& \text{otherwise}
			\end{cases}.
		\end{equation}
	\end{prop}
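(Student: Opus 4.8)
The plan is to reduce everything to the already-computed values of $\langle\!\langle \rO_\sigma \cdot \det(\rS)\rangle\!\rangle^{\quot}_0$ from Corollary~\ref{cor:rank_2_1-pointed_dual}. Since $\rO_\alpha^* = \rO_{\alpha^*}\cdot\det(\rS)$ and $\alpha^* = (k-\alpha_2,\,k-\alpha_1)$, I would first expand the Grothendieck product $\rO_\nu\cdot\rO_{\alpha^*}$ using the rank-$2$ Littlewood--Richardson rule~\eqref{eq:rank_2_product_grothendieck_functor}. Writing $\beta:=\nu+\alpha^*=(\nu_1+k-\alpha_2,\,\nu_2+k-\alpha_1)$ and $m:=\min\{\nu_1-\nu_2,\,\alpha_1-\alpha_2\}$ (note $\alpha^*_1-\alpha^*_2=\alpha_1-\alpha_2$), the rule groups into
\[
\rO_\nu \cdot \rO_{\alpha^*} = \sum_{j=0}^{m-1}\big[\rO_{(\beta_1 - j,\, \beta_2 + j)} - \rO_{(\beta_1 - j,\, \beta_2 + j + 1)}\big] + \rO_{(\beta_1 - m,\, \beta_2 + m)}.
\]
Applying $\langle\!\langle-\rangle\!\rangle^{\quot}_0$ and linearity then expresses the desired quantity as a signed sum of the values $g(\sigma):=\langle\!\langle \rO_\sigma\cdot\det(\rS)\rangle\!\rangle^{\quot}_0$, each read off from Corollary~\ref{cor:rank_2_1-pointed_dual}.

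The key simplification I would exploit is that $g(\sigma)$ vanishes unless the first part $\sigma_1$ lies in $\{k,\,2k+1,\,2k+2\}$ (and for $\sigma_1=k$ only the exact shape $(k,k)$ survives). Hence only those Littlewood--Richardson terms whose first part hits one of these three values matter, and for each fixed first part at most one $(+)$-term and one $(-)$-term are relevant. Indexing the term at first part $p$ by $j=\beta_1-p$ and introducing the invariant $\delta:=|\nu|-|\alpha|$, one checks that the $(+)$-term at $p=2k+1$ has second part $\beta_2+j=\delta-1$, so the $q$-coefficient contributions at $p=2k+1$ add up to $\mathbb{1}[\delta\le N]-\mathbb{1}[\delta\le N-1]$, while those at $p=2k+2$ add up to exactly the negative. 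Thus, whenever all four terms are present they cancel identically, so the bulk dies and only boundary terms of the index range $0\le j\le m$ survive.

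It then remains to analyze those boundaries. The $(k,k)$-shape contributes only at order $q^0$; a short argument shows the index constraint $0\le\beta_1-k\le m$ forces $\nu_1\le\alpha_1$ and $\nu_2\le\alpha_2$, which together with $\delta=0$ forces $\nu=\alpha$, so this contributes exactly $\mathbb{1}[\nu=\alpha]$, giving the value $1$. The remaining nonzero contributions come from the top index $j=0$ (a $(+)$-term whose canceling $p=2k+2$ partner falls out of range at $j=-1$) and the bottom index $j=m$ (a $(+)$-term with no $(-)$-partner). Matching these: the top $j=0$ at $p=2k+1$ yields $q$ precisely when $\beta=(2k+1,k+1)$; the bottom $j=m$ at $p=2k+1$ yields $q$ precisely when $\nu_1=\alpha_1+k+1$ and $\nu_2\le\alpha_2$; the bottom $j=m$ at $p=2k+2$ yields $-q$ when $\nu_1=\alpha_1+k+2$ and $\nu_2\le\alpha_2$, except that when its second part reaches $2k+2$—which happens exactly for $\nu=\alpha+(N,N)$—case~(d) of Corollary~\ref{cor:rank_2_1-pointed_dual} upgrades it to $q^2$. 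Every other position leaves the interval $[\beta_1-m,\beta_1]$ meeting $\{k,2k+1,2k+2\}$ only in the interior, where the cancellation applies, giving $0$.

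The main obstacle is the boundary bookkeeping: one must track, for each of the finitely many positions of $[\beta_1-m,\beta_1]$ relative to the support $\{k,2k+1,2k+2\}$ of $g$, exactly which paired $(+)$/$(-)$ terms are present, and verify the survivors assemble into the single claimed monomial. The subtle and non-obvious point is that the interior cancellation occurs \emph{between adjacent first parts} $2k+1$ and $2k+2$ (rather than within a single first part); identifying this telescoping is what collapses the a priori complicated signed sum to just its boundary contributions and produces the clean five-case answer.
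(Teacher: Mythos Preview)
Your proposal is correct and is essentially the paper's approach: both expand $\rO_\nu\cdot\rO_{\alpha^*}$ via the rank-$2$ Littlewood--Richardson rule, regroup the terms into $\widetilde\Delta$-type brackets $\rO_\sigma-\rO_{\sigma+(0,1)}$ plus the endpoint term $\rO_{\nu\oplus\alpha^*}$ (your lone $j=m$ term), and evaluate each piece using Corollary~\ref{cor:rank_2_1-pointed_dual}. The paper packages your bracket computation as the formula~\eqref{eq:rank_2_Delta_tilde}---each bracket vanishes unless its shape is exactly $(2k+1,k+1)$ or $(2k+2,k)$---which is precisely your observation that only first parts $2k+1$ and $2k+2$ matter and that the two cancel when both lie in the interior of the index range.
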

	\begin{proof}
		Recall that $\langle\!\langle
		\rO_{\nu}\cdot 	\rO_{\alpha}^*
		\rangle\!\rangle^{\quot}_{0} = \langle\!\langle
		\rO_{\nu}\cdot 	\rO_{\alpha^*}\cdot \det(\rS)
		\rangle\!\rangle^{\quot}_{0}$. We will first give a suitable description of
		$\rO_{\nu}\cdot\rO_{\alpha^*}$.	For any partitions $\lambda=(\lambda_1,\lambda_2)$ with $\lambda_1\ne \lambda_2$, we define $$\widetilde{\Delta}_{\lambda} := \rO_{\lambda}-\rO_{\lambda+(0,1)}. $$
		We rewrite the Littlewood--Richardson rule as
		\begin{equation}
			\label
			{eq:rewrite-LR-rule}
			\rO_{\nu}\cdot 	\rO_{\alpha^*}
			=  \rO_{\nu+\alpha^*}
			+ \sum_{i=1}^{m_{\nu,\alpha^*}} 
			\Delta_{\nu+\alpha^*+(-i,i)}
			=\rO_{\nu\oplus\alpha^*}
			+ \sum_{i=1}^{m_{\nu,\alpha^*}} \widetilde{\Delta}_{\nu\oplus\alpha^* + (i,-i)}.
		\end{equation}
		where $\nu\oplus\alpha^*$ is the partition with parts $\nu_{1}+k-\alpha_1$ and $\nu_{2}+k-\alpha_2$.
		Note that for any $\nu$ and $\alpha$, the partitions $\nu\oplus\alpha^*+(i,-i)$
		in the summation have parts differed by at least $2$. 
		
		Corollary~\ref{cor:rank_2_1-pointed_dual} implies that for any partition
		$\lambda\in \P_{2,3k}$ with $\lambda_1-\lambda_2\ge 2$, we have 
		\begin{equation}\label{eq:rank_2_Delta_tilde}
			\langle\!\langle
			\widetilde{\Delta}_{\lambda}\cdot  \det(\rS)
			\rangle\!\rangle^{\quot}_{0}= \begin{cases}
				q  & \lambda = (2k+1, k+1) \\
				-q  & \lambda= (2k+2, k) \\
				0 &\text{otherwise}
			\end{cases}.
		\end{equation}
		In particular, the summation involving $\widetilde{\Delta}$'s in~\eqref{eq:rewrite-LR-rule} does not contribute to the $q^0$ and $q^2$ terms in $\langle\!\langle
		\rO_{\nu}\cdot 	\rO_{\alpha}^*
		\rangle\!\rangle^{\quot}_{0}$.
		Thus, Corollary~\ref{cor:rank_2_1-pointed_dual} implies that the coefficient of
		$q^0$ (resp. $q^2$) in $\langle\!\langle
		\rO_{\nu}\cdot 	\rO_{\alpha}^*
		\rangle\!\rangle^{\quot}_{0}$ is nonzero (and equals 1)
		if and only if $\nu\oplus \alpha^* = (k,k)$
		(resp. $\nu\oplus \alpha^* = (2k+2,2k+2)$).
		Note that these two conditions are equivalent to $\nu=\alpha$ and $\nu
		=\alpha +(k+2,k+2)
		=\alpha +(N,N)
		$,
		respectively.
		Hence the first two cases in~\eqref{eq:rank2_kappa_map} hold.
		
		We prove the remaining cases by evaluating the coefficient of $q^1$ in $\langle\!\langle
		\rO_{\nu}\cdot 	\rO_{\alpha}^*
		\rangle\!\rangle^{\quot}_{0}$.
		We may assume that $\nu_{1}>k$ because the case $\nu_1\leq k$
		is covered by the orthogonality in~\eqref{eq:orthogonality}.
		Note that the summation
		$\sum_{i=1}^{m_{\nu,\alpha^*}} 
		\widetilde{\Delta}_{\nu\oplus\alpha^* + (i,-i)}$
		in~\eqref{eq:rewrite-LR-rule} is non-empty if and only if $\nu_1 \neq \nu_2$
		and $\alpha_1 \neq \alpha_2$.
		In this case, it follows from~\eqref{eq:rank_2_Delta_tilde} that
		\begin{equation}
			\label{eq:delta_sum}
			\sum_{i=1}^{m_{\nu,\alpha^*}} 
			\langle
			\widetilde{\Delta}_{\nu\oplus\alpha^* + (i,-i)}
			\cdot
			\det(\rS)
			\rangle^{\quot}_{0,1}= \begin{cases}
				q& \nu+\alpha^* = (2k+1,k+1)\\
				-q& \nu\oplus\alpha^* = (2k+1,k+1)\\
				0& \text{otherwise}
			\end{cases}.
		\end{equation}
		Furthermore, by Corollary~\ref{cor:rank_2_1-pointed_dual}, we have
		\begin{equation}
			\label{eq:main_term}
			\langle \rO_{\nu\oplus\alpha^*}
			\cdot
			\det(\rS)
			\rangle^{\quot}_{0,1}=\begin{cases}
				q& \nu\oplus\alpha^* = (2k+1,j),\ j\le k+1\\
				-q& \nu\oplus\alpha^* = (2k+2,j),\ j\le k\\
				0& \text{otherwise}
			\end{cases}.
		\end{equation}
		There are three cases to consider: (i) $\nu_1 =\nu_2$; (ii) $\alpha_1=\alpha_2$; (iii)
		$\nu_1 \neq \nu_2,\alpha_1 \neq \alpha_2$.
		In the first two cases,~\eqref{eq:delta_sum} does not contribute. The proposition follows by carefully adding~\eqref{eq:delta_sum} and~\eqref{eq:main_term}.
		We leave the details to the reader.
	\end{proof}

	\begin{cor}
		\label
		{cor:rank-2-kappa-map-formula}
		For any partition
		$\nu\in \P_{2,2k}$,
		we have
		\[
		\kappa(\rO_\nu)
		=
		\O_{\nu}
		+q^2\O_{\nu-(N,N)}
		+
		q\bigg(\cO_{(\nu_2-1,\nu_1-k-1)} +\sum_{i=\nu_2}^{\nu_1-k-1} \cO_{(\nu_1-k-1,i)}-\sum_{i=\nu_2}^{\nu_1-k-2} \cO_{(\nu_1-k-2,i)}
		\bigg). 
		\]
		Here we set $\O_\lambda=0$ when $\lambda\notin \P_{2,k}$.
	\end{cor}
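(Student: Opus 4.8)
The plan is to substitute the explicit evaluation of $\langle\!\langle \rO_\nu \cdot \rO_\alpha^*\rangle\!\rangle^{\quot}_0$ supplied by Proposition~\ref{cor:rank2_kappa_map} into the definition of the quantum reduction map and then collect the resulting terms according to their power of $q$. Recall from Definition~\ref{def:kappa-map} that
\[
\kappa(\rO_\nu) = \sum_{\alpha\in \P_{2,k}} \langle\!\langle \rO_\nu \cdot \rO_\alpha^*\rangle\!\rangle^{\quot}_0\, \cO_\alpha,
\]
so it suffices, for each of the five nontrivial cases in Proposition~\ref{cor:rank2_kappa_map}, to identify the set of $\alpha\in\P_{2,k}$ realizing that case and to record its contribution. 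Since Proposition~\ref{cor:rank2_kappa_map} assigns a single value to each pair $(\nu,\alpha)$, the cases are mutually exclusive, and with $\nu$ fixed I only need to translate each condition on $(\nu,\alpha)$ into a condition on $\alpha$.

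The contributions at orders $q^0$ and $q^2$ are immediate: the case $\nu=\alpha$ contributes $\O_\nu$ (which is zero by the convention $\O_\lambda=0$ for $\lambda\notin\P_{2,k}$, consistent with the case $\nu\in\P_{2,2k}\setminus\P_{2,k}$), and the case $\nu=\alpha+(N,N)$, i.e.\ $\alpha=\nu-(N,N)$, contributes $q^2\,\O_{\nu-(N,N)}$. For the $q^1$ contributions I would handle the three remaining cases in turn, using $\alpha^*=(k-\alpha_2,k-\alpha_1)$. The condition $\nu+\alpha^*=(2k+1,k+1)$ solves uniquely to $\alpha=(\nu_2-1,\nu_1-k-1)$, giving the term $q\,\cO_{(\nu_2-1,\nu_1-k-1)}$. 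The condition $\nu_1=\alpha_1+k+1$ with $\nu_2\le\alpha_2$ forces $\alpha_1=\nu_1-k-1$ and lets $\alpha_2=i$ range over $\nu_2\le i\le \nu_1-k-1$ (the upper bound being $\alpha_2\le\alpha_1$, with membership in $\P_{2,k}$ enforced by the convention at the endpoints), producing $q\sum_{i=\nu_2}^{\nu_1-k-1}\cO_{(\nu_1-k-1,i)}$. Similarly $\nu_1=\alpha_1+k+2$ with $\nu_2\le\alpha_2$ yields $\alpha_1=\nu_1-k-2$ and $\alpha_2=i$ with $\nu_2\le i\le\nu_1-k-2$, carrying sign $-1$ and contributing $-q\sum_{i=\nu_2}^{\nu_1-k-2}\cO_{(\nu_1-k-2,i)}$. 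Summing the five pieces gives exactly the asserted formula.

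The one point requiring genuine care, and the main potential obstacle, is the boundary bookkeeping: I must check that the three families of $\alpha$ contributing at order $q$ are genuinely disjoint, so that no $\O_\alpha$ is counted twice or with the wrong coefficient. I would verify this by comparing first parts, since the case-A index has $\alpha_1=\nu_2-1$ while the other two have $\alpha_1=\nu_1-k-1$ and $\alpha_1=\nu_1-k-2$; a coincidence of first parts between any two families is quickly seen to force $\alpha_1<\alpha_2$ or to push $i$ outside its stated range, so the offending $\alpha$ either fails to be a partition or contributes nothing by the convention. This confirms that the plain sum of the five contributions reproduces $\kappa(\rO_\nu)$ with the correct coefficients, and that the convention $\O_\lambda=0$ for $\lambda\notin\P_{2,k}$ correctly trims the index ranges at their endpoints.
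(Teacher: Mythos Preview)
Your proposal is correct and follows exactly the approach the paper takes: the paper's proof is a one-line reference stating that the formula follows directly from Proposition~\ref{cor:rank2_kappa_map} and the definition of $\kappa$, and you have simply spelled out that unpacking case by case. Your handling of the five cases and the translation of each condition on $(\nu,\alpha)$ into an explicit $\alpha$ (or range of $\alpha$'s) is accurate, and the disjointness check you flag is a reasonable bit of bookkeeping that the paper leaves implicit.
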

	\begin{proof}
		It follows directly from Proposition~\ref{cor:rank2_kappa_map} and the definition of $\kappa$.
	\end{proof}

	\begin{proof}[Proof of Theorem~\ref{thm:rank2_quantum_K_ring}]
		Let $\lambda,\mu\in \P_{2,k}$ and $\mu_1-\mu_2\le \lambda_1-\lambda_{2}$,
		which implies $\lambda\oplus \mu = (\lambda_1+\mu_{2},\lambda_2+\mu_1)$.
		By applying the quantum reduction map $\kappa$ to the Littlewood--Richardson rule~\eqref{eq:rewrite-LR-rule},
		we obtain
		\[
		\cO_\lambda\bullet\cO_{\mu} = \kappa(\rO_{\lambda\oplus\mu})+\sum_{i=1}^{\mu_1-\mu_2}\kappa \big(\widetilde{\Delta}_{\lambda\oplus \mu+(i,-i)}\big).
		\]
		Finding the coefficient of $q^0$ and $q^2$ in $\cO_\lambda\bullet\cO_{\mu}$ is
		straightforward using Corollary~\ref{cor:rank-2-kappa-map-formula}.
		
		We will consider the $q^1$ coefficient in the rest of the proof. According to Corollary~\ref{cor:rank-2-kappa-map-formula},
		the coefficient of $q^1$ in $\kappa(\rO_\nu)$ is
		\[
		[q^1]\kappa(\rO_\nu)
		=
		\cO_{(\nu_2-1,\nu_1-k-1)} +\sum_{i=\nu_2}^{\nu_1-k-1} \cO_{(\nu_1-k-1,i)}-\sum_{i=\nu_2}^{\nu_1-k-2} \cO_{(\nu_1-k-2,i)}. 
		\]
		Note that for any partition involved in the last two summations,
		the difference between its parts is no less than $k+1$.
		Since $(\lambda_{1}+\mu_{2})-(\lambda_{2}+\mu_1)\le k$,	we have
		\[
		[q^1]\kappa(\rO_{\lambda\oplus\mu}) =q  \cO_{\tilde{\lambda}+\mu},
		\]
		where $\tilde{\lambda} := (\lambda_2-1,\lambda_1-k-1)$. Corollary~\ref{cor:rank-2-kappa-map-formula} also implies that
		for any $\nu\in \P_{2,2k}$,
		\[
		[q^1]
		\kappa(\widetilde{\Delta}_{\nu} ) = \begin{cases}
			\Delta_{(\nu_2-1,\nu_1-k-1)}& \nu_1-\nu_2\le k\\
			0& \nu_1-\nu_2= k+1\\
			-\Delta_{(\nu_1-k-2, \nu_2) }& \nu_1-\nu_2\ge k+2\\
		\end{cases}. \]
		Using the above observation, a careful calculation gives us
		\begin{align*}
			[q^1]\sum_{i=1}^{\mu_1-\mu_2}\kappa \big(\widetilde{\Delta}_{\lambda\oplus \mu+(i,-i)}\big)
			&=\sum_{i=1}^{m_{\tilde{\lambda},\mu}} \Delta_{\tilde{\lambda}+\mu+(-i,i)}.
		\end{align*} 
		This finishes the proof.
	\end{proof}

	\begin{rem}
		In Section 3.2 of Part I~\cite{SinhaZhang}, we showed that
		$F^{\lambda, \mu}=	\langle\!\langle
		\rO_{\lambda}^{*}
		,
		\rO_{\mu}^{*}
		\rangle\!\rangle^{\quot}_{0}$.
		Using the identity
		\begin{align*}
			\rO_{\lambda}^{*}
			= \rO_{\lambda^*}-\rO_{\lambda^*+(1,0)} - \bar{\delta}_{\lambda_{1},\lambda_{2}}\big(\rO_{\lambda^*+(0,1)}-\rO_{\lambda^*+(1,1)}\big)
		\end{align*}
		with $\bar{\delta}_{\lambda_{1},\lambda_{2}} = 1-\delta_{\lambda_{1},\lambda_{2}}$,
		and Proposition~\ref{cor:rank2_kappa_map},
		one can show that the inverse of the quantized pairing matrix is given by
		\begin{equation*}
			F^{\lambda,\mu} = \begin{cases}
				1& \lambda=\lambda^*\\
				-1 & \lambda = \lambda^* +(1,0)\\
				-1 & \mu = \lambda^* + (1,0)\\
				1 & \lambda=\lambda^* + (1,1), \lambda_{1}\ne \lambda_{2}\\
				q& \lambda+\mu= (k, 0), 1\le \lambda_{1}\le k-1\\
				-q& \lambda+\mu= (k-1,0)\\
				0& \text{otherwise}
			\end{cases}.
		\end{equation*}
	\end{rem}

	
	\appendix

	\section{Quantum $K$-ring of $\mathrm{Gr}(3,6)$}
	\label
	{subsec:examples}
	
	Consider the Schubert basis of $K(\mathrm{Gr}(3,6))$:
	\begin{align*}
		\{&\O,\O_{1},\O_{1,1},\O_{1,1,1},\O_{2},\O_{2,1},\O_{2,1,1},\O_{2,2},\O_{2,2,1},\O_{2,2,2},\O_{3}, \\
		& \O_{3,1},\O_{3,1,1},\O_{3,2},\O_{3,2,1},\O_{3,2,2},\O_{3,3},\O_{3,3,1},\O_{3,3,2},\O_{3,3,3}\}
	\end{align*}
	We note that using Seidel representation (see \cite{CZJM}) we only require to explicitly state the product table for multiplication for a subset of the basis elements. Below, we give a full multiplication table for $\qk(\mathrm{Gr}(3,6))$ calculated using the quantum reduction map in Definition~\ref{def:kappa-map}:
	{\allowdisplaybreaks
		\footnotesize
		\begin{align*}
			\O_{1} \bullet \O_{1} & = \O_{1,1}+\O_{2}-\O_{2,1}
			&
			\O_{1} \bullet \O_{1,1} & = \O_{1,1,1}+\O_{2,1}-\O_{2,1,1}
			\\
			\O_{1} \bullet \O_{1,1,1} & = \O_{2,1,1}
			&
			\O_{1} \bullet \O_{2} & = \O_{2,1}+\O_{3}-\O_{3,1}
			\\
			\O_{1} \bullet \O_{2,1} & = \O_{2,1,1}+\O_{2,2}-\O_{2,2,1}
			&
			\O_{1} \bullet \O_{2,1,1} & = \O_{2,2,1}+\O_{3,1,1}-\O_{3,2,1}
			\\
			+\O_{3,1}&-\O_{3,1,1}-\O_{3,2}+\O_{3,2,1}
			&
			&
			\\
			\O_{1} \bullet \O_{2,2} & = \O_{2,2,1}+\O_{3,2}-\O_{3,2,1}
			&
			\O_{1} \bullet \O_{2,2,1} & = \O_{2,2,2}+\O_{3,2,1}-\O_{3,2,2}
			\\
			\O_{1} \bullet \O_{2,2,2} & = \O_{3,2,2}
			&
			\O_{1} \bullet \O_{3} & = \O_{3,1}
			\\
			\O_{1} \bullet \O_{3,1} & = \O_{3,1,1}+\O_{3,2}-\O_{3,2,1}
			&
			\O_{1} \bullet \O_{3,1,1} & = q -q\O_{1}+\O_{3,2,1}
			\\
			\O_{1} \bullet \O_{3,2} & = \O_{3,2,1}+\O_{3,3}-\O_{3,3,1}
			&
			\O_{1} \bullet \O_{3,2,1} & = q\O_{1}-q\O_{1,1}-q\O_{2}
			\\
			&
			&
			+q\O_{2,1}&+\O_{3,2,2}+\O_{3,3,1}-\O_{3,3,2}
			\\
			\O_{1} \bullet \O_{3,2,2} & = q\O_{1,1}-q\O_{2,1}+\O_{3,3,2}
			&
			\O_{1} \bullet \O_{3,3} & = \O_{3,3,1}
			\\
			\O_{1} \bullet \O_{3,3,1} & = q\O_{2}-q\O_{2,1}+\O_{3,3,2}
			&
			\O_{1} \bullet \O_{3,3,2} & = q\O_{2,1}-q\O_{2,2}+\O_{3,3,3}
			\\
			\O_{1} \bullet \O_{3,3,3} & = q\O_{2,2}
			&
			\O_{1,1} \bullet \O_{1,1} & = \O_{2,1,1}+\O_{2,2}-\O_{2,2,1}
			\\
			\O_{1,1} \bullet \O_{1,1,1} & = \O_{2,2,1}
			&
			\O_{1,1} \bullet \O_{2} & = \O_{2,1,1}+\O_{3,1}-\O_{3,1,1}
			\\
			\O_{1,1} \bullet \O_{2,1} & = \O_{2,2,1}+\O_{3,1,1}+\O_{3,2}
			&
			\O_{1,1} \bullet \O_{2,1,1} & = \O_{2,2,2}+\O_{3,2,1}-\O_{3,2,2}
			\\
			&-2\O_{3,2,1}
			&
			&
			\\
			\O_{1,1} \bullet \O_{2,2} & = \O_{3,2,1}+\O_{3,3}-\O_{3,3,1}
			&
			\O_{1,1} \bullet \O_{2,2,1} & = \O_{3,2,2}+\O_{3,3,1}-\O_{3,3,2}
			\\
			\O_{1,1} \bullet \O_{2,2,2} & = \O_{3,3,2}
			&
			\O_{1,1} \bullet \O_{3} & = \O_{3,1,1}
			\\
			\O_{1,1} \bullet \O_{3,1} & = q -q\O_{1}+\O_{3,2,1}
			&
			\O_{1,1} \bullet \O_{3,1,1} & = q\O_{1}-q\O_{1,1}+\O_{3,2,2}
			\\
			\O_{1,1} \bullet \O_{3,2} & = q\O_{1}-q\O_{2}+\O_{3,3,1}
			&
			\O_{1,1} \bullet \O_{3,2,1} & = q\O_{1,1}+q\O_{2}-2q\O_{2,1}
			\\
			&
			&
			&+\O_{3,3,2}
			\\
			\O_{1,1} \bullet \O_{3,2,2} & = q\O_{2,1}-q\O_{2,2}+\O_{3,3,3}
			&
			\O_{1,1} \bullet \O_{3,3} & = q\O_{2}
			\\
			\O_{1,1} \bullet \O_{3,3,1} & = q\O_{2,1}+q\O_{3}-q\O_{3,1}
			&
			\O_{1,1} \bullet \O_{3,3,2} & = q\O_{2,2}+q\O_{3,1}-q\O_{3,2}
			\\
			\O_{1,1} \bullet \O_{3,3,3} & = q\O_{3,2}
			&
			\O_{1,1,1} \bullet \O_{1,1,1} & = \O_{2,2,2}
			\\
			\O_{1,1,1} \bullet \O_{2} & = \O_{3,1,1}
			&
			\O_{1,1,1} \bullet \O_{2,1} & = \O_{3,2,1}
			\\
			\O_{1,1,1} \bullet \O_{2,1,1} & = \O_{3,2,2}
			&
			\O_{1,1,1} \bullet \O_{2,2} & = \O_{3,3,1}
			\\
			\O_{1,1,1} \bullet \O_{2,2,1} & = \O_{3,3,2}
			&
			\O_{1,1,1} \bullet \O_{2,2,2} & = \O_{3,3,3}
			\\
			\O_{1,1,1} \bullet \O_{3} & = q 
			&
			\O_{1,1,1} \bullet \O_{3,1} & = q\O_{1}
			\\
			\O_{1,1,1} \bullet \O_{3,1,1} & = q\O_{1,1}
			&
			\O_{1,1,1} \bullet \O_{3,2} & = q\O_{2}
			\\
			\O_{1,1,1} \bullet \O_{3,2,1} & = q\O_{2,1}
			&
			\O_{1,1,1} \bullet \O_{3,2,2} & = q\O_{2,2}
			\\
			\O_{1,1,1} \bullet \O_{3,3} & = q\O_{3}
			&
			\O_{1,1,1} \bullet \O_{3,3,1} & = q\O_{3,1}
			\\
			\O_{1,1,1} \bullet \O_{3,3,2} & = q\O_{3,2}
			&
			\O_{1,1,1} \bullet \O_{3,3,3} & = q\O_{3,3}
			\\
			\O_{2} \bullet \O_{2} & = \O_{2,2}+\O_{3,1}-\O_{3,2}
			&
			\O_{2} \bullet \O_{2,1} & = \O_{2,2,1}+\O_{3,1,1}+\O_{3,2}
			\\
			&
			&
			&-2\O_{3,2,1}
			\\
			\O_{2} \bullet \O_{2,1,1} & = q -q\O_{1}+\O_{3,2,1}
			&
			\O_{2} \bullet \O_{2,2} & = \O_{2,2,2}+\O_{3,2,1}-\O_{3,2,2}
			\\
			\O_{2} \bullet \O_{2,2,1} & = q\O_{1}-q\O_{1,1}+\O_{3,2,2}
			&
			\O_{2} \bullet \O_{2,2,2} & = q\O_{1,1}
			\\
			\O_{2} \bullet \O_{3} & = \O_{3,2}
			&
			\O_{2} \bullet \O_{3,1} & = \O_{3,2,1}+\O_{3,3}-\O_{3,3,1}
			\\
			\O_{2} \bullet \O_{3,1,1} & = q\O_{1}-q\O_{2}+\O_{3,3,1}
			&
			\O_{2} \bullet \O_{3,2} & = \O_{3,2,2}+\O_{3,3,1}-\O_{3,3,2}
			\\
			\O_{2} \bullet \O_{3,2,1} & = q\O_{1,1}+q\O_{2}-2q\O_{2,1}
			&
			\O_{2} \bullet \O_{3,2,2} & = q\O_{1,1,1}+q\O_{2,1}-q\O_{2,1,1}
			\\
			&+\O_{3,3,2}
			&
			&
			\\
			\O_{2} \bullet \O_{3,3} & = \O_{3,3,2}
			&
			\O_{2} \bullet \O_{3,3,1} & = q\O_{2,1}-q\O_{2,2}+\O_{3,3,3}
			\\
			\O_{2} \bullet \O_{3,3,2} & = q\O_{2,1,1}+q\O_{2,2}-q\O_{2,2,1}
			&
			\O_{2} \bullet \O_{3,3,3} & = q\O_{2,2,1}
			\\
			\O_{2,1} \bullet \O_{2,1} & = q -2q\O_{1}+q\O_{1,1}
			&
			\O_{2,1} \bullet \O_{2,1,1} & = q\O_{1}-q\O_{1,1}-q\O_{2}
			\\
			+q\O_{2}&-q\O_{2,1}+\O_{2,2,2}+2\O_{3,2,1}
			&
			+q\O_{2,1}&+\O_{3,2,2}+\O_{3,3,1}-\O_{3,3,2}
			\\
			-2\O_{3,2,2}&+\O_{3,3}-2\O_{3,3,1}+\O_{3,3,2}
			&
			&
			\\
			\O_{2,1} \bullet \O_{2,2} & = q\O_{1}-q\O_{1,1}-q\O_{2}
			&
			\O_{2,1} \bullet \O_{2,2,1} & = q\O_{1,1}+q\O_{2}-2q\O_{2,1}
			\\
			+q\O_{2,1}&+\O_{3,2,2}+\O_{3,3,1}-\O_{3,3,2}
			&
			&+\O_{3,3,2}
			\\
			\O_{2,1} \bullet \O_{2,2,2} & = q\O_{2,1}
			&
			\O_{2,1} \bullet \O_{3} & = \O_{3,2,1}
			\\
			\O_{2,1} \bullet \O_{3,1} & = q\O_{1}-q\O_{1,1}-q\O_{2}
			&
			\O_{2,1} \bullet \O_{3,1,1} & = q\O_{1,1}+q\O_{2}-2q\O_{2,1}
			\\
			+q\O_{2,1}&+\O_{3,2,2}+\O_{3,3,1}-\O_{3,3,2}
			&
			&+\O_{3,3,2}
			\\
			\O_{2,1} \bullet \O_{3,2} & = q\O_{1,1}+q\O_{2}-2q\O_{2,1}
			&
			\O_{2,1} \bullet \O_{3,2,1} & = q\O_{1,1,1}+2q\O_{2,1}-2q\O_{2,1,1}
			\\
			&+\O_{3,3,2}
			&
			-2q\O_{2,2}&+q\O_{2,2,1}+q\O_{3}-2q\O_{3,1}
			\\
			&
			&
			+q\O_{3,1,1}&+q\O_{3,2}-q\O_{3,2,1}+\O_{3,3,3}
			\\
			\O_{2,1} \bullet \O_{3,2,2} & = q\O_{2,1,1}+q\O_{2,2}-q\O_{2,2,1}
			&
			\O_{2,1} \bullet \O_{3,3} & = q\O_{2,1}
			\\
			+q\O_{3,1}&-q\O_{3,1,1}-q\O_{3,2}+q\O_{3,2,1}
			&
			&
			\\
			\O_{2,1} \bullet \O_{3,3,1} & = q\O_{2,1,1}+q\O_{2,2}-q\O_{2,2,1}
			&
			\O_{2,1} \bullet \O_{3,3,2} & = q\O_{2,2,1}+q\O_{3,1,1}+q\O_{3,2}
			\\
			+q\O_{3,1}&-q\O_{3,1,1}-q\O_{3,2}+q\O_{3,2,1}
			&
			&-2q\O_{3,2,1}
			\\
			\O_{2,1} \bullet \O_{3,3,3} & = q\O_{3,2,1}
			&
			\O_{2,1,1} \bullet \O_{2,1,1} & = q\O_{1,1}-q\O_{2,1}+\O_{3,3,2}
			\\
			\O_{2,1,1} \bullet \O_{2,2} & = q\O_{2}-q\O_{2,1}+\O_{3,3,2}
			&
			\O_{2,1,1} \bullet \O_{2,2,1} & = q\O_{2,1}-q\O_{2,2}+\O_{3,3,3}
			\\
			\O_{2,1,1} \bullet \O_{2,2,2} & = q\O_{2,2}
			&
			\O_{2,1,1} \bullet \O_{3} & = q\O_{1}
			\\
			\O_{2,1,1} \bullet \O_{3,1} & = q\O_{1,1}+q\O_{2}-q\O_{2,1}
			&
			\O_{2,1,1} \bullet \O_{3,1,1} & = q\O_{1,1,1}+q\O_{2,1}-q\O_{2,1,1}
			\\
			\O_{2,1,1} \bullet \O_{3,2} & = q\O_{2,1}+q\O_{3}-q\O_{3,1}
			&
			\O_{2,1,1} \bullet \O_{3,2,1} & = q\O_{2,1,1}+q\O_{2,2}-q\O_{2,2,1}
			\\
			&
			&
			+q\O_{3,1}&-q\O_{3,1,1}-q\O_{3,2}+q\O_{3,2,1}
			\\
			\O_{2,1,1} \bullet \O_{3,2,2} & = q\O_{2,2,1}+q\O_{3,2}-q\O_{3,2,1}
			&
			\O_{2,1,1} \bullet \O_{3,3} & = q\O_{3,1}
			\\
			\O_{2,1,1} \bullet \O_{3,3,1} & = q\O_{3,1,1}+q\O_{3,2}-q\O_{3,2,1}
			&
			\O_{2,1,1} \bullet \O_{3,3,2} & = q\O_{3,2,1}+q\O_{3,3}-q\O_{3,3,1}
			\\
			\O_{2,1,1} \bullet \O_{3,3,3} & = q\O_{3,3,1}
			&
			\O_{2,2} \bullet \O_{2,2} & = q\O_{1,1}+q\O_{2}-q\O_{2,1}
			\\
			\O_{2,2} \bullet \O_{2,2,1} & = q\O_{2,1}+q\O_{3}-q\O_{3,1}
			&
			\O_{2,2} \bullet \O_{2,2,2} & = q\O_{3,1}
			\\
			\O_{2,2} \bullet \O_{3} & = \O_{3,2,2}
			&
			\O_{2,2} \bullet \O_{3,1} & = q\O_{1,1}-q\O_{2,1}+\O_{3,3,2}
			\\
			\O_{2,2} \bullet \O_{3,1,1} & = q\O_{2,1}-q\O_{2,2}+\O_{3,3,3}
			&
			\O_{2,2} \bullet \O_{3,2} & = q\O_{1,1,1}+q\O_{2,1}-q\O_{2,1,1}
			\\
			\O_{2,2} \bullet \O_{3,2,1} & = q\O_{2,1,1}+q\O_{2,2}-q\O_{2,2,1}
			&
			\O_{2,2} \bullet \O_{3,2,2} & = q\O_{3,1,1}+q\O_{3,2}-q\O_{3,2,1}
			\\
			+q\O_{3,1}&-q\O_{3,1,1}-q\O_{3,2}+q\O_{3,2,1}
			&
			&
			\\
			\O_{2,2} \bullet \O_{3,3} & = q\O_{2,1,1}
			&
			\O_{2,2} \bullet \O_{3,3,1} & = q\O_{2,2,1}+q\O_{3,1,1}-q\O_{3,2,1}
			\\
			\O_{2,2} \bullet \O_{3,3,2} & = q^2 -q^2\O_{1}+q\O_{3,2,1}
			&
			\O_{2,2} \bullet \O_{3,3,3} & = q^2\O_{1}
			\\
			\O_{2,2,1} \bullet \O_{2,2,1} & = q\O_{2,2}+q\O_{3,1}-q\O_{3,2}
			&
			\O_{2,2,1} \bullet \O_{2,2,2} & = q\O_{3,2}
			\\
			\O_{2,2,1} \bullet \O_{3} & = q\O_{1,1}
			&
			\O_{2,2,1} \bullet \O_{3,1} & = q\O_{1,1,1}+q\O_{2,1}-q\O_{2,1,1}
			\\
			\O_{2,2,1} \bullet \O_{3,1,1} & = q\O_{2,1,1}+q\O_{2,2}-q\O_{2,2,1}
			&
			\O_{2,2,1} \bullet \O_{3,2} & = q\O_{2,1,1}+q\O_{3,1}-q\O_{3,1,1}
			\\
			\O_{2,2,1} \bullet \O_{3,2,1} & = q\O_{2,2,1}+q\O_{3,1,1}+q\O_{3,2}
			&
			\O_{2,2,1} \bullet \O_{3,2,2} & = q\O_{3,2,1}+q\O_{3,3}-q\O_{3,3,1}
			\\
			&-2q\O_{3,2,1}
			&
			&
			\\
			\O_{2,2,1} \bullet \O_{3,3} & = q\O_{3,1,1}
			&
			\O_{2,2,1} \bullet \O_{3,3,1} & = q^2 -q^2\O_{1}+q\O_{3,2,1}
			\\
			\O_{2,2,1} \bullet \O_{3,3,2} & = q^2\O_{1}-q^2\O_{2}+q\O_{3,3,1}
			&
			\O_{2,2,1} \bullet \O_{3,3,3} & = q^2\O_{2}
			\\
			\O_{2,2,2} \bullet \O_{2,2,2} & = q\O_{3,3}
			&
			\O_{2,2,2} \bullet \O_{3} & = q\O_{1,1,1}
			\\
			\O_{2,2,2} \bullet \O_{3,1} & = q\O_{2,1,1}
			&
			\O_{2,2,2} \bullet \O_{3,1,1} & = q\O_{2,2,1}
			\\
			\O_{2,2,2} \bullet \O_{3,2} & = q\O_{3,1,1}
			&
			\O_{2,2,2} \bullet \O_{3,2,1} & = q\O_{3,2,1}
			\\
			\O_{2,2,2} \bullet \O_{3,2,2} & = q\O_{3,3,1}
			&
			\O_{2,2,2} \bullet \O_{3,3} & = q^2 
			\\
			\O_{2,2,2} \bullet \O_{3,3,1} & = q^2\O_{1}
			&
			\O_{2,2,2} \bullet \O_{3,3,2} & = q^2\O_{2}
			\\
			\O_{2,2,2} \bullet \O_{3,3,3} & = q^2\O_{3}
			&
			\O_{3} \bullet \O_{3} & = \O_{3,3}
			\\
			\O_{3} \bullet \O_{3,1} & = \O_{3,3,1}
			&
			\O_{3} \bullet \O_{3,1,1} & = q\O_{2}
			\\
			\O_{3} \bullet \O_{3,2} & = \O_{3,3,2}
			&
			\O_{3} \bullet \O_{3,2,1} & = q\O_{2,1}
			\\
			\O_{3} \bullet \O_{3,2,2} & = q\O_{2,1,1}
			&
			\O_{3} \bullet \O_{3,3} & = \O_{3,3,3}
			\\
			\O_{3} \bullet \O_{3,3,1} & = q\O_{2,2}
			&
			\O_{3} \bullet \O_{3,3,2} & = q\O_{2,2,1}
			\\
			\O_{3} \bullet \O_{3,3,3} & = q\O_{2,2,2}
			&
			\O_{3,1} \bullet \O_{3,1} & = q\O_{2}-q\O_{2,1}+\O_{3,3,2}
			\\
			\O_{3,1} \bullet \O_{3,1,1} & = q\O_{2,1}+q\O_{3}-q\O_{3,1}
			&
			\O_{3,1} \bullet \O_{3,2} & = q\O_{2,1}-q\O_{2,2}+\O_{3,3,3}
			\\
			\O_{3,1} \bullet \O_{3,2,1} & = q\O_{2,1,1}+q\O_{2,2}-q\O_{2,2,1}
			&
			\O_{3,1} \bullet \O_{3,2,2} & = q\O_{2,2,1}+q\O_{3,1,1}-q\O_{3,2,1}
			\\
			+q\O_{3,1}&-q\O_{3,1,1}-q\O_{3,2}+q\O_{3,2,1}
			&
			&
			\\
			\O_{3,1} \bullet \O_{3,3} & = q\O_{2,2}
			&
			\O_{3,1} \bullet \O_{3,3,1} & = q\O_{2,2,1}+q\O_{3,2}-q\O_{3,2,1}
			\\
			\O_{3,1} \bullet \O_{3,3,2} & = q\O_{2,2,2}+q\O_{3,2,1}-q\O_{3,2,2}
			&
			\O_{3,1} \bullet \O_{3,3,3} & = q\O_{3,2,2}
			\\
			\O_{3,1,1} \bullet \O_{3,1,1} & = q\O_{2,1,1}+q\O_{3,1}-q\O_{3,1,1}
			&
			\O_{3,1,1} \bullet \O_{3,2} & = q\O_{2,2}+q\O_{3,1}-q\O_{3,2}
			\\
			\O_{3,1,1} \bullet \O_{3,2,1} & = q\O_{2,2,1}+q\O_{3,1,1}+q\O_{3,2}
			&
			\O_{3,1,1} \bullet \O_{3,2,2} & = q\O_{2,2,2}+q\O_{3,2,1}-q\O_{3,2,2}
			\\
			&-2q\O_{3,2,1}
			&
			&
			\\
			\O_{3,1,1} \bullet \O_{3,3} & = q\O_{3,2}
			&
			\O_{3,1,1} \bullet \O_{3,3,1} & = q\O_{3,2,1}+q\O_{3,3}-q\O_{3,3,1}
			\\
			\O_{3,1,1} \bullet \O_{3,3,2} & = q\O_{3,2,2}+q\O_{3,3,1}-q\O_{3,3,2}
			&
			\O_{3,1,1} \bullet \O_{3,3,3} & = q\O_{3,3,2}
			\\
			\O_{3,2} \bullet \O_{3,2} & = q\O_{2,1,1}+q\O_{2,2}-q\O_{2,2,1}
			&
			\O_{3,2} \bullet \O_{3,2,1} & = q\O_{2,2,1}+q\O_{3,1,1}+q\O_{3,2}
			\\
			&
			&
			&-2q\O_{3,2,1}
			\\
			\O_{3,2} \bullet \O_{3,2,2} & = q^2 -q^2\O_{1}+q\O_{3,2,1}
			&
			\O_{3,2} \bullet \O_{3,3} & = q\O_{2,2,1}
			\\
			\O_{3,2} \bullet \O_{3,3,1} & = q\O_{2,2,2}+q\O_{3,2,1}-q\O_{3,2,2}
			&
			\O_{3,2} \bullet \O_{3,3,2} & = q^2\O_{1}-q^2\O_{1,1}+q\O_{3,2,2}
			\\
			\O_{3,2} \bullet \O_{3,3,3} & = q^2\O_{1,1}
			&
			\O_{3,2,1} \bullet \O_{3,2,1} & = q^2 -2q^2\O_{1}+q^2\O_{1,1}
			\\
			&
			&
			+q^2\O_{2}&-q^2\O_{2,1}+q\O_{2,2,2}+2q\O_{3,2,1}
			\\
			&
			&
			-2q\O_{3,2,2}&+q\O_{3,3}-2q\O_{3,3,1}+q\O_{3,3,2}
			\\
			\O_{3,2,1} \bullet \O_{3,2,2} & = q^2\O_{1}-q^2\O_{1,1}-q^2\O_{2}
			&
			\O_{3,2,1} \bullet \O_{3,3} & = q\O_{3,2,1}
			\\
			+q^2\O_{2,1}&+q\O_{3,2,2}+q\O_{3,3,1}-q\O_{3,3,2}
			&
			&
			\\
			\O_{3,2,1} \bullet \O_{3,3,1} & = q^2\O_{1}-q^2\O_{1,1}-q^2\O_{2}
			&
			\O_{3,2,1} \bullet \O_{3,3,2} & = q^2\O_{1,1}+q^2\O_{2}-2q^2\O_{2,1}
			\\
			+q^2\O_{2,1}&+q\O_{3,2,2}+q\O_{3,3,1}-q\O_{3,3,2}
			&
			&+q\O_{3,3,2}
			\\
			\O_{3,2,1} \bullet \O_{3,3,3} & = q^2\O_{2,1}
			&
			\O_{3,2,2} \bullet \O_{3,2,2} & = q^2\O_{2}-q^2\O_{2,1}+q\O_{3,3,2}
			\\
			\O_{3,2,2} \bullet \O_{3,3} & = q^2\O_{1}
			&
			\O_{3,2,2} \bullet \O_{3,3,1} & = q^2\O_{1,1}+q^2\O_{2}-q^2\O_{2,1}
			\\
			\O_{3,2,2} \bullet \O_{3,3,2} & = q^2\O_{2,1}+q^2\O_{3}-q^2\O_{3,1}
			&
			\O_{3,2,2} \bullet \O_{3,3,3} & = q^2\O_{3,1}
			\\
			\O_{3,3} \bullet \O_{3,3} & = q\O_{2,2,2}
			&
			\O_{3,3} \bullet \O_{3,3,1} & = q\O_{3,2,2}
			\\
			\O_{3,3} \bullet \O_{3,3,2} & = q^2\O_{1,1}
			&
			\O_{3,3} \bullet \O_{3,3,3} & = q^2\O_{1,1,1}
			\\
			\O_{3,3,1} \bullet \O_{3,3,1} & = q^2\O_{1,1}-q^2\O_{2,1}+q\O_{3,3,2}
			&
			\O_{3,3,1} \bullet \O_{3,3,2} & = q^2\O_{1,1,1}+q^2\O_{2,1}-q^2\O_{2,1,1}
			\\
			\O_{3,3,1} \bullet \O_{3,3,3} & = q^2\O_{2,1,1}
			&
			\O_{3,3,2} \bullet \O_{3,3,2} & = q^2\O_{2,1,1}+q^2\O_{3,1}-q^2\O_{3,1,1}
			\\
			\O_{3,3,2} \bullet \O_{3,3,3} & = q^2\O_{3,1,1}
			&
			\O_{3,3,3} \bullet \O_{3,3,3} & = q^3 
		\end{align*}
	}

		\bibliographystyle{alphnum}
		\bibliography{ref2}

	\end{document}